\newtheorem{defn}{Definition}[section]
\newtheorem{thm}{Theorem}[section]
\newtheorem{prop}{Proposition}[section]
\newtheorem{rmk}{Remark}[section]
\newtheorem{lma}{Lemma}[section]
\newtheorem{exm}{Example}[section]
\newcommand{\eq}{\begin{equation}}
\newcommand{\qe}{\end{equation}}
\def\N{{\rm I\kern-0.16em N}}
\def\R{{\rm I\kern-0.16em R}}
\def\E{\mathbb{E}}
\def\P{{\rm I\kern-0.16em P}}
\def\F{{\rm I\kern-0.16em F}}
\def\B{{\rm I\kern-0.16em B}}
\def\C{{\rm I\kern-0.46em C}}
\def\G{{\rm I\kern-0.50em G}}
\def\Z{{\mathbb{Z}}}
\numberwithin{equation}{section}
\def\red#1{\textcolor{blue}{#1}}
\begin{document}

\title{Stein-type covariance identities:\\
  \large Klaassen, Papathanasiou and Olkin-Shepp--type bounds for
  arbitrary target distributions }

\date{December 21, 2018}
\renewcommand{\thefootnote}{\fnsymbol{footnote}}
\begin{small}
  \author{Marie Ernst\footnotemark[1], \, 
Gesine Reinert\footnotemark[2] \, and Yvik Swan\footnotemark[1]}
\end{small}
\footnotetext[1]{Universit\'e de Li\`ege, corresponding author Yvik
  Swan: yswan@uliege.be.}
\footnotetext[2]{University of Oxford.}

\maketitle

\begin{abstract}
  In this paper, following on from
  \cite{ley2015distances,LRS16, MRS18} we
  present a minimal formalism 
  for Stein operators which leads to different probabilistic
  representations of solutions to Stein equations. These in turn
  provide a wide family of Stein-Covariance identities which we put to
  use for revisiting the very classical topic of bounding the variance
  of functionals of random variables.  Applying the Cauchy-Schwarz
  inequality yields first order upper and lower
  Klaassen~\cite{K85}-type variance bounds. A probabilistic
  representation of Lagrange's identity (i.e.\ Cauchy-Schwarz with
  remainder) leads to
  Papathanasiou~\cite{papathanasiou1988variance}-type variance
  expansions of arbitrary order. A matrix Cauchy-Schwarz inequality
  leads to Olkin-Shepp~\cite{olkin2005matrix} type covariance bounds.  All results hold for
  univariate target distribution under very weak assumptions (in
  particular they hold for continuous and discrete distributions
  alike). Many concrete illustrations are provided.
\end{abstract}


\section{Introduction}
\label{sec:introduction}

Charles Stein's mathematical legacy is growing at a remarkable pace
and many of the techniques and concepts he pioneered are now a staple
of contemporary probability theory. The origins of this stream of
research lie in two papers: \cite{S72}, in which the method was first
presented in the context of Gaussian approximation, and \cite{C75}
where the method was first adapted to a non-Gaussian context, namely
that of Poisson approximation.  As has been noted by many authors
since then, the approach can be applied quasi verbatim to any target
distribution other than the Gaussian and the Poisson, under the
condition that ``correct'' \emph{ad hoc} objects be identified which
will permit the basic identities to hold.  There now exist several
excellent books and reviews on Stein's method and its consequences in
various settings, such as \cite{Stein1986,BC05,BC05b,NP11,ChGoSh11}.
There also exist several non-equivalent general frameworks for the
theory covering to large swaths of probability distributions, of which
we single out the works \cite{Do14,upadhye2014stein} for univariate
distributions under analytical assumptions, \cite{arras2017stein,
  arras2018stein} for infinitely divisible distributions and
\cite{mackey2016multivariate,gorham2016measuring,gorham2017measuring}
as well as \cite{fang2018multivariate} for multivariate densities
under diffusive assumptions. A ``canonical'' differential Stein
operator theory is also presented in
\cite{ley2015distances,LRS16,MRS18}.  

Stein's method can be broken down into a small number of key steps:
[A] identification of a (characterizing) linear operator, [B] bounding of
solutions to some differential equations related to this operator,
[C] probabilistic Taylor expansions and construction of well-designed
couplings; see \cite{Re04} for an overview.  Each of these steps
has produced an entire ecosystem of ``Stein-type objects'' (operators,
equations, couplings, etc.). These Stein-type objects 
are in symbiosis
with many classical branches of mathematics such as orthogonal
polynomials, functional analysis, PDE theory or Markov chain theory
and therefore open bridges between Stein's theory and these important
areas of mathematics. 
 More recently, connections with other more contemporary
mathematics have been discovered, such as e.g.\ information theory as
in \cite{nourdin2013entropy, arras2017formulae}, optimal
transportation as in \cite{ledoux2015stein, fathi2018stein}, and machine
learning as in
\cite{gorham2015measuring,liu2016kernelized,chwialkowski2016kernel}.

In the present paper, we pursue the work begun in
\cite{ley2015distances,LRS16,MRS18} and adopt a minimal point of view
on all the objects concerned, this time concentrating on the solutions
to so-called ``Stein equations''. Aside from its intrinsic interest --
which may arguably be only of concern to those meddling directly in
the method itself -- we shall illustrate the power of our formalism by
showing how it allows to obtain optimal and (extremely) flexible upper
and lower bounds on arbitrary functionals of random variables with
arbitrary univariate distribution. We recover, as particular cases,
many (all?)  previously known ``variance bounds'' (also called
Poincar\'e inequalities), including Klaassen's bounds from \cite{K85},
Papathanasiou's variance expansions from
\cite{papathanasiou1988variance} (and Houdr\'e and Kagan's famous
expansion from \cite{HK95}) as well as Olkin and Shepp's bound from
\cite{olkin2005matrix}. Our method of proof is in each case new and,
moreover, the formalism we introduce makes them in some sense
elementary -- at least as soon as the framework is laid out. In order
to ease the reader into our work, we begin by detailing it in the
easiest setting, namely that of a Gaussian target distribution.

\subsection{The Gaussian case}
\label{sec:gaussian-case}

The Gaussian density  $ \gamma(x) = (2\pi\sigma^2)^{-1/2}
\mathrm{exp}(-(x-\mu)^2/2\sigma^2)$ has many remarkable
properties. One of them stems from Stein's celebrated lemma which
reads: a random variable $N$ has distribution $N \sim \mathcal{N}(\mu, \sigma^2)$ if
and only if 
  \begin{equation}
    \label{eq:stlma}
    \mathbb{E} \left[ (N-\mu) g(N) \right] = \mathbb{E} \left[ \sigma^2
      g'(N)\right]
  \end{equation}
  $\mbox{for all }g:\R \to \R \mbox{ such that }
  \mathbb{E}[|g'(N)|]<\infty. $ We let $\mathcal{F}(\gamma)$ be the
  collection of $g$ such that $\mathbb{E}|g'(N)|<\infty$. There are
  many ways to prove \eqref{eq:stlma}, but the two basic approaches are:
\begin{itemize}
\item  Approach 1:  use the fact that the Gaussian \emph{score function} $\rho(x)= (\log \gamma(x))' = \gamma'(x)/\gamma(x) =
  -(x-\mu)/\sigma^2$  is linear, in combination with integration by parts (see \cite{Stein2004});
\item Approach 2: use the fact that the Gaussian \emph{Stein kernel}
  $\tau(x) = \gamma(x)^{-1} \int_{-\infty}^x(u-\mu) \gamma(u) du =
  \sigma^2$ is constant, in combination with Fubini's theorem (see
  \cite[Lemma 1.2]{NP11}).
\end{itemize}
Contrarily to appearances, the final result obtained
via these two approaches is not
identical because (i)~there are technical differences concerning the
classes of test functions to which the resulting identities apply;
(ii)~they lead to two formally different identities: where the
variance is on the right-hand-side of the equality \eqref{eq:stlma}
via Approach 2, it is at the denominator of the left hand side of the
equality via Approach 1. This is not a simple cosmetic difference that
can be brushed away as a byproduct of the standardization, it is
rather central to the understanding of the very nature of Stein's
operators and the innocuity of the difference is rather characteristic
of the Gaussian distribution. In the language of the present paper,
many of the remarkable properties of the Gaussian actually stem from a
very ``Steinian'' characteristic property of the Gaussian: it is the
only distribution whose Stein kernel
$\tau(x) = \gamma(x)^{-1} \int_{-\infty}^x(u-\mu) \gamma(u) du$ is
constant.

In order to exploit Stein's identity \eqref{eq:stlma} in the context
of the so-called \emph{Stein's method}, one starts by considering
solutions to the so-called \emph{Stein equations}
\begin{equation}\label{eq:68}
  \sigma^2f_h'(x) - (x-\mu) f_h(x) = h(x) - \mathbb{E}[h(N)]
\end{equation}
where $h \in \mathcal{H}$ some class of test functions. For any given
$h \in L^1(\gamma)$ there exists (a.s.) a unique solution $ \mathcal{L}_\gamma h=f_h$  to
\eqref{eq:68} such that $ f_h$ is bounded on $\R$, given by
\begin{equation}
  \label{eq:58}
 \mathcal{L}_{\gamma} h (x)= f_h(x)= \frac{1}{\gamma(x)}
  \int_{-\infty}^x \big(h(u) - \mathbb{E}[h(N)]\big)\gamma(u) du = -\frac{1}{\gamma(x)}
  \int_x^{\infty} \big(h(u) - \mathbb{E}[h(N)]\big)\gamma(u) du.
\end{equation}
The operator $h \mapsto \mathcal{L}_\gamma h$ is called the
\emph{pseudo inverse Stein operator}.
Such functions as \eqref{eq:58} can be used in
order to assess normality of a real valued random variable $X$ through
the identities:
\begin{align}
  \label{eq:48}
  \sup_{h{\in}\mathcal H} \mathbb{E}[h(X)] - \mathbb{E}[h(N)] & =
                                                               \sup_{h\in
                                                               \mathcal{H}}
                                                               \mathbb{E}
                                                               [
                                                               \sigma^2
                                                               f_h'(X) -
                                                               (X-\mu)
                                                               f_h(X) ]
  \\
  & \le \sup_{f \in \mathcal{F}} \mathbb{E}
                                                               [
                                                               \sigma^2
                                                               f'(X) -
                                                               (X-\mu)
                                                               f(X) ] \label{eq:59}
\end{align}
where
$\mathcal{F}\supset \left\{ \mathcal{L}_\gamma h\, | \, h \in
  \mathcal{H}\right\}$ is to be ``well-chosen''. The expressions on
the left hand side of \eqref{eq:48} are classical, as they correspond
to the so-called Integral Probability Metric (IPM): Wasserstein-1
distance for $\mathcal{H}_{\mathrm{Lip}}=$Lip(1) the class of all
Lipschitz functions with constant 1; Total Variation distance for
$\mathcal{H}_{TV}= \left\{ \mathbb{I}_A, A \subset \R \right\}$ the
class of all indicators of all Borel sets in $\R$; Kolmogorov distance
for
$\mathcal{H}_{\mathrm{Kol}}=\left\{ \mathbb{I}_{(-\infty, z]}, z \in
  \R \right\}$ the class of indicators of half lines. These are
natural measures of probabilistic discrepancy.  The right hand side of
\eqref{eq:59} is more mysterious, and one of the secrets of its
usefulness lies in the fact that one can chose the class $\mathcal{F}$
to be of a very simple nature.  Indeed, we start with the observation
that there exist constants $\kappa^i_{\mathcal{H}}, i=1, 2, \ldots$ such
that the functions $f_h$ satisfy the uniform bounds 
\begin{align}
  \label{eq:64}
 \sup_{h \in \mathcal{H}} \| f_h \|_{\infty}  \le
  \kappa^1_{\mathcal{H}}, \quad  \sup_{h \in \mathcal{H}} \|
f_h' \|_{\infty}  \le \kappa_{\mathcal{H}}^2, \quad \sup_{h \in \mathcal{H}} \|
f_h'' \|_{\infty}  \le \kappa_{\mathcal{H}}^3 
  \quad  \ldots 
\end{align}
for all important classes $\mathcal{H}$ (including the three mentioned
above) -- these are the so-called ``Stein's factors''. Given bounds
such as \eqref{eq:64} one can take
\begin{align}
  \label{eq:65}
  \mathcal{F} = \left\{ f : \R \to R \mbox{ such that } \|f\|_{\infty} \le
  \kappa_{\mathcal{H}}^1, \|f'\|_{\infty} \le \kappa_{\mathcal{H}}^2, \mbox{ etc} \right\}
\end{align}
so that
$\mathcal{F}\supset \left\{ f_h \mbox{ of the form \eqref{eq:58}}
\right\}$ and, crucially, the class $\mathcal{F}$ has a \emph{simple}
structure. This makes \eqref{eq:59} a very potent starting point for
assessing normality of $X$.

Starting from \eqref{eq:stlma}, it is natural to consider the
\emph{Stein operator}
$\mathcal{A}_{\gamma}f(x) =\sigma^2 f'(x) - (x-\mu) f(x)$ which has
the property that $X = N$ (equality in distribution) if and only if
$\mathbb{E}[ \mathcal{A}_{\gamma}f(N)] = 0$ for all
$f \in \mathcal{F}(\gamma)$.  In light of the arguments from the
previous paragraph, the inverse of
$\mathcal{A}_{\gamma}$ is given by the 
pseudo inverse Stein operator which is the integral
operator which to any $h \in L^1(\gamma)$ associates the function $ \mathcal{L}_{\gamma} h$ given in \eqref{eq:58}. 
With this construction, 
$\mathcal{A}_{\gamma} \mathcal{L}_{\gamma} h = h-\mathbb{E}[h[N]]$ for
all $h \in L^1(\gamma)$ and
$ \mathcal{L}_{\gamma} \mathcal{A}_{\gamma} h = h$ for all
$h \in \mathcal{F}(\gamma)$.  

Here are some important examples. 
With $g_z(x) = \mathbb{I}(x \le z),$ 
\begin{equation} \label{eq:g1}
 \mathcal{L}_{\gamma} g_z(x) = \frac{1}{\gamma(x)} \left( \mathbb{P} (N \le x \wedge z) -  \mathbb{P} (N \le x)  \mathbb{P} (N \le  z) \right) . 
\end{equation}
As a second example, the Stein kernel $\tau(x)$ is nothing
other than the solution \eqref{eq:58} evaluated at $h = \mathrm{Id}$
the identity function;  $ \mathcal{L}_{\gamma}\mathrm{Id}(x) = \tau(x)$
which, as already mentioned, is constant and equal to $\sigma^2$ at
all $x$.
From Stein's identity we deduce that the Gaussian density is the
only density for which the function $ \mathcal{L}_{\gamma}\mathrm{Id}$
is constant. {{The operator $\mathcal{L}_\gamma$ evaluated at the
    identity function in general gives the zero bias density from
    \cite{GR97}, in the sense that
    $ \mathbb{E} [X h (X)] = \mathbb{E} [ \{
    -\mathcal{L}_{\gamma}\mathrm{Id}(X) \} h'(X)] = \mathbb{E} [
    h'(X^*)]$, with $X^*$ having the $X$-zero bias density.  }}

The Stein operator definition  gives in particular that, for $f \in \mathcal{F}(\gamma)$ and $ h \in   L^1(\gamma)$,
 $$\mathbb{E} \left[ \{ \mathcal{A}_{\gamma}f(N) \} h(N) \right] =
 \sigma^2 \mathbb{E} [ f'(N) h(N) - (fh)'(N)] = 
   -\mathbb{E}[f(N) h'(N)].$$
  Letting $g(x) =\mathcal{A}_{\gamma}f(x) $ and using  $ \mathcal{L}_{\gamma} \mathcal{A}_{\gamma} h = h$ allows to generalize \eqref{eq:stlma}
to the   \emph{covariance identity}:
\begin{align}
  \label{eq:stekovidentity}
\mathrm{Cov}\left[  g(N), h(N) \right] = \mathbb{E} \left[
  \big(-\mathcal{L}_{\gamma}g(N)\big)\,h'(N)  \right] 
\end{align}
which is valid for all $h, g \in L^1(\gamma)$. 
For further covariance inequalities we start
with the (trivial) observation that
\begin{align}
  \label{eq:63}
  \mathcal{L}_{\gamma}h(x) = \frac{1}{\gamma(x)}\mathbb{E} \left[ \big(h(N)
  -\mathbb{E}[h(N)]\big) \mathbb{I}[N \le x] \right]=
  \mbox{Cov}\bigg[h(N), 
  \frac{\mathbb{I}[N\le x]}{\gamma(x)}\bigg].
\end{align} 
Then the following holds. 
\begin{lma}[Representation of the inverse Stein
  operator]\label{lma:reprgaussiancase}
  Let $N_1, N_2$ be independent copies of $N$. Then 
  \begin{equation}
    \label{eq:62}
 - \mathcal{L}_{\gamma}h(x) = \frac{1}{\gamma(x)}\mathbb{E} \left[ \big(h(N_2)-
h(N_1)\big) \mathbb{I}[N_1 \le x \le N_2]\right]
\end{equation}
for all $h \in L^1(\gamma)$.
\end{lma}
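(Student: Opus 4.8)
The plan is to start from the covariance representation \eqref{eq:63} already derived for $\mathcal{L}_\gamma h$ and to ``unfold'' the centering constant $\mathbb{E}[h(N)]$ by means of a second independent copy. Negating \eqref{eq:63} and renaming the integration variable $N_1$ gives
\[
-\mathcal{L}_{\gamma}h(x) = \frac{1}{\gamma(x)}\mathbb{E}\left[\big(\mathbb{E}[h(N)] - h(N_1)\big)\mathbb{I}[N_1 \le x]\right].
\]
Writing $\mathbb{E}[h(N)] = \mathbb{E}[h(N_2)]$ for an independent copy $N_2$ and using independence to draw the $N_2$-expectation back inside the bracket, this becomes
\[
-\mathcal{L}_{\gamma}h(x) = \frac{1}{\gamma(x)}\mathbb{E}\left[\big(h(N_2) - h(N_1)\big)\mathbb{I}[N_1 \le x]\right].
\]
The integrability needed to justify this interchange is exactly the standing hypothesis $h \in L^1(\gamma)$, which guarantees $\mathbb{E}|h(N_i)| < \infty$. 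Comparing with the target \eqref{eq:62}, it then suffices to show that the surplus contribution, taken over $\{N_1 \le x,\, N_2 < x\}$, vanishes.

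For this I would split $\mathbb{I}[N_1 \le x] = \mathbb{I}[N_1 \le x \le N_2] + \mathbb{I}[N_1 \le x,\, N_2 < x]$, so that the gap between the last display and \eqref{eq:62} is precisely
\[
\mathbb{E}\left[\big(h(N_2) - h(N_1)\big)\mathbb{I}[N_1 \le x,\, N_2 < x]\right].
\]
Since $N$ is continuous the event $\{N_2 = x\}$ is null, so this integration region agrees a.s.\ with the symmetric region $\{N_1 \le x,\, N_2 \le x\}$. On a region invariant under the swap $N_1 \leftrightarrow N_2$, and with $N_1, N_2$ i.i.d., the antisymmetric integrand $h(N_2) - h(N_1)$ integrates to zero, since $\mathbb{E}[h(N_2)\mathbb{I}[N_1 \le x,\, N_2 \le x]] = \mathbb{E}[h(N_1)\mathbb{I}[N_1 \le x,\, N_2 \le x]]$ by relabelling the pair. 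This closes the argument.

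As a cross-check, and as an alternative route bypassing the symmetrization, I could verify \eqref{eq:62} by a direct Fubini computation: expanding the right-hand side as the double integral $\int_{-\infty}^x\int_x^{\infty} (h(y_2) - h(y_1))\gamma(y_1)\gamma(y_2)\,dy_2\,dy_1$ and factorizing each piece, the sum collapses to $F(x)\mathbb{E}[h(N)] - \int_{-\infty}^x h(u)\gamma(u)\,du$, with $F$ the Gaussian c.d.f.; this is exactly $-\int_{-\infty}^x (h(u) - \mathbb{E}[h(N)])\gamma(u)\,du$, matching the first integral expression for $\gamma(x)\mathcal{L}_\gamma h(x)$ in \eqref{eq:58}.

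The argument is essentially bookkeeping, and I do not anticipate a genuine obstacle. The only two points that warrant care are (i) the legitimacy of introducing the independent copy, which reduces to Fubini under $h \in L^1(\gamma)$, and (ii) the a.s.\ identification of the strict and non-strict half-lines, which is where continuity of the target is (harmlessly) invoked. I would also note that the continuity in (ii) is inessential to the final statement: the direct Fubini computation establishes \eqref{eq:62} verbatim, which suggests the representation should persist, mutatis mutandis, for discrete targets treated later in the paper.
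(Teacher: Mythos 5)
Your argument is correct, and it is essentially the proof the paper has in mind: the paper's stated proof is precisely to expand \eqref{eq:62} and match it with \eqref{eq:63}, which is what your direct Fubini cross-check does verbatim, while your primary route (introducing the independent copy $N_2$ to absorb the centering constant and then killing the surplus term over the symmetric region $\{N_1\le x,\,N_2\le x\}$ by exchangeability) is just a rearrangement of the same computation -- your symmetrization step plays the role of the identity $\mathbb{E}[\chi^{\ell}(X,x)+\chi^{-\ell}(x,X)]=1$ used in the general version, Lemma \ref{lem:represnt}. Your closing remark is also consistent with the paper: the representation does persist for discrete targets, provided the strict/non-strict inequalities are calibrated via the shifts in \eqref{eq:kerneldeff}, which is exactly what Lemma \ref{lem:represnt} does.
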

The proof of Lemma \ref{lma:reprgaussiancase} follows simply by
expanding \eqref{eq:62} and showing that it is equal to \eqref{eq:63}
for all $x$. We will provide details in a (much) more general context
in Section \ref{sec:variance-expansion} (see Lemma \ref{lem:represnt}).
    
 Identity \eqref{eq:62} is not the only available probabilistic
representation for $\mathcal{L}_\gamma$.  The next one we found
in \cite[Proposition 1]{saumard2018weighted}.

\begin{lma}[Saumard's lemma] The symmetric kernel 
  \begin{equation}\label{eq:66}
    K_\gamma(x, x') = \mathbb{P}[N \le x\wedge x'] - \mathbb{P}[N\le x]
    \mathbb{P}[N \le x']
  \end{equation}
  is  positive definite. Moreover 
  \begin{equation}
    \label{eq:67}
    -\mathcal{L}_{\gamma}h(x) = \mathbb{E} \left[ h'(N)
      \frac{K_{\gamma}(N, x)}{\gamma(N) \gamma(x)}\right]  
  \end{equation}
  for all absolutely continuous $h \in L^1(\gamma)$, and letting $N_1,
  N_2$ be independent copies of $N$ we have 
  \begin{equation}
    \label{eq:11}
  \mathrm{Cov}[h(N), g(N)] = \mathbb{E} \left[ h'(N_1)
    \frac{K_{\gamma}(N_1, N_2)}{\gamma(N_1) \gamma(N_2)} g'(N_2)  \right]
\end{equation}
for all absolutely continuous  $h, g \in L^2(\gamma)$. 
\end{lma}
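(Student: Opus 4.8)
The plan is to treat the three assertions in increasing order of difficulty, leveraging the identities already recorded above. For positive definiteness of $K_\gamma$, I would first observe that it is nothing but a covariance kernel: since $\mathbb{E}[\mathbb{I}(N\le x)\mathbb{I}(N\le x')] = \mathbb{P}[N\le x\wedge x']$, the definition \eqref{eq:66} reads $K_\gamma(x,x') = \mathrm{Cov}[\mathbb{I}(N\le x),\, \mathbb{I}(N\le x')]$. Consequently, for any points $x_1,\dots,x_n$ and reals $c_1,\dots,c_n$, the quadratic form $\sum_{i,j} c_i c_j K_\gamma(x_i,x_j)$ equals $\mathrm{Var}\big[\sum_i c_i\,\mathbb{I}(N\le x_i)\big]\ge 0$, which is exactly positive semi-definiteness; no analytic input beyond this identification is needed.

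For the representation \eqref{eq:67}, the key is to read off from \eqref{eq:g1} that, with $g_x(u)=\mathbb{I}(u\le x)$, one has $K_\gamma(u,x)=\gamma(u)\,\mathcal{L}_\gamma g_x(u)$. Substituting this into the right-hand side of \eqref{eq:67} collapses the $\gamma(u)$ factors and produces $\gamma(x)^{-1}\mathbb{E}[h'(N)\,\mathcal{L}_\gamma g_x(N)]$. The covariance identity \eqref{eq:stekovidentity}, applied with $g_x$ in the role of $g$ (legitimate since only $h$ is differentiated there and $g_x\in L^1(\gamma)$), rewrites this as $-\gamma(x)^{-1}\mathrm{Cov}[g_x(N),h(N)]$, and finally \eqref{eq:63} identifies this last expression with $-\mathcal{L}_\gamma h(x)$, as desired. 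As an alternative self-contained route I would verify \eqref{eq:67} directly: write the right-hand side as $\gamma(x)^{-1}\int h'(u)K_\gamma(u,x)\,du$, integrate by parts using $\partial_u K_\gamma(u,x)=\gamma(u)\big(\mathbb{I}(u\le x)-\mathbb{P}[N\le x]\big)$, note that the boundary terms vanish because $K_\gamma(u,x)\to 0$ as $u\to\pm\infty$, and match the outcome against the explicit formula \eqref{eq:58} for $\mathcal{L}_\gamma h$.

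The covariance identity \eqref{eq:11} then follows by composition. Starting from \eqref{eq:stekovidentity} in the form $\mathrm{Cov}[h(N),g(N)]=\mathbb{E}[(-\mathcal{L}_\gamma h(N))\,g'(N)]$, I would insert the representation \eqref{eq:67} for $-\mathcal{L}_\gamma h$ evaluated at an independent copy $N_2$, turning the single expectation into the iterated expectation $\mathbb{E}_{N_2}\big[\mathbb{E}_{N_1}[\,h'(N_1)K_\gamma(N_1,N_2)/(\gamma(N_1)\gamma(N_2))\,]\,g'(N_2)\big]$; collapsing the two expectations into one over the independent pair $(N_1,N_2)$ yields \eqref{eq:11}. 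The main obstacle is not algebraic but measure-theoretic: justifying this interchange of expectations (Fubini) and the applicability of \eqref{eq:stekovidentity} and \eqref{eq:67} under the stated $L^2(\gamma)$ hypotheses. Concretely, I would establish absolute integrability of the integrand by invoking the Cauchy–Schwarz bound $K_\gamma(x,x')^2\le K_\gamma(x,x)K_\gamma(x',x')$ afforded by positive definiteness; since the diagonal is $K_\gamma(x,x)=\mathbb{P}[N\le x]\big(1-\mathbb{P}[N\le x]\big)$, this factorizes the double integral and reduces convergence to the single weighted integral $\int |h'(u)|\sqrt{K_\gamma(u,u)}\,du$ together with its $g$-analogue. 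Controlling these via the finite-variance ($L^2(\gamma)$) assumption on $h$ and $g$ is the one step demanding genuine care rather than bookkeeping, and it is where the hypotheses of the lemma are truly used.
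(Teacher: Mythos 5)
Your proposal is correct and follows essentially the same route as the paper: both prove \eqref{eq:67} by combining \eqref{eq:63}, the covariance identity \eqref{eq:stekovidentity} applied to $g_x=\mathbb{I}(\cdot\le x)$, and the explicit formula \eqref{eq:g1}, and both obtain \eqref{eq:11} by inserting \eqref{eq:67} into \eqref{eq:stekovidentity} and conditioning on one copy of $N$. The only (minor) divergence is in the first claim: you establish positive definiteness via the quadratic form $\sum_{i,j}c_ic_jK_\gamma(x_i,x_j)=\mathrm{Var}\bigl[\sum_i c_i\mathbb{I}(N\le x_i)\bigr]\ge 0$, which addresses the stated property head-on, whereas the paper instead exhibits the pointwise factorization $K_\gamma(x,x')=\mathbb{P}[N\le x\wedge x']\,\mathbb{P}[N\ge x\vee x']$ showing nonnegativity of the kernel values (the property actually used later); your added Fubini/integrability discussion is sound extra care not present in the paper.
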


\begin{rmk}
  Functions $x' \mapsto K_\gamma(x, x')/\gamma(x')$ are represented in
  Figure \ref{fig:sfig11} for various values of $x$.
\end{rmk}

\begin{proof}
Symmetry of  \eqref{eq:66} is immediate. To see that it is   positive,
we note that 
  \begin{align*}
    K_{\gamma}(x, x') =
    \begin{cases}
      \mathbb{P}[N \le x]\mathbb{P}[N\ge x'] & \mbox{ if } x \le x'\\
      \mathbb{P}[N \le x']\mathbb{P}[N\ge x] & \mbox{ if } x >  x'
    \end{cases}.
  \end{align*}
  To obtain \eqref{eq:67}, we use \eqref{eq:stekovidentity} inside
  \eqref{eq:63}: recalling the notation $g_x(n) = \mathbb{I}[n \le x]$ we have 
\begin{align*}
  -\mathcal{L}_{\gamma} h(x) & = \frac{1}{\gamma(x)} \mathrm{Cov} \left[
h(N),  g_x(N) \right] 
   = \frac{1}{\gamma(x)} \mathbb{E} \left[
h'(N) \big(-\mathcal{L}_{\gamma}g_x(N)) \right].  
\end{align*}
{{
Using \eqref{eq:g1}, with $N'$ an independent copy of $N$, 
$$
-\mathcal{L}_{\gamma}g_x(N)= \frac{1}{\gamma(N)} \left( \mathbb{P} (N' \le x \wedge N) -  \mathbb{P} (N' \le x)  \mathbb{P} (N' \le  N) \right) . 
$$ Hence
\begin{align*}
  -\mathcal{L}_{\gamma} h(x) & = \mathbb{E} \left[ \frac{1}{\gamma(x)\gamma(N)} \left( \mathbb{P} (N' \le x \wedge N) -  \mathbb{P} (N' \le x)  \mathbb{P} (N' \le  N) \right) 
h'(N)\right] 
\end{align*}
and \eqref{eq:67} follows. 
}}
For the last point, we simply combine \eqref{eq:stekovidentity} and
\eqref{eq:67} again, with $N_2$ yet another independent copy of
$N$. Then
\begin{align*}
  \mathrm{Cov}[h(N), g(N)] & = \mathbb{E} \left[ h'(N_1)
                            \big(-  \mathcal{L}_{\gamma}g(N_1)\big) \right] 
    = \mathbb{E} \left[ h'(N_1) \mathbb{E}\bigg[
    g'(N_2)\frac{K_{\gamma}(N_1, N_2)}{\gamma(N_1) \gamma(N_2)}  \, | \,
    N_1\bigg] \right]
\end{align*}
and the conclusion follows. 
\end{proof}
We conclude this introduction by showing how the notations we have
introduced are not only of cosmetic value, but that they allow to
obtain some powerful results in an efficient manner. For instance,
starting from Saumard's lemma combined with a simple application of
the Cauchy-Schwarz inequality, for any test function such that $h'>0$
a.s.\ then
\begin{align*}
  \mathrm{Var}[g(N)] & =  \mathbb{E}\left[ g'(N_1)
                       \frac{K_{\gamma}(N_1, N_2)}{\gamma(N_1)
                       \gamma(N_2)} g'(N_2) \right] \\
  & =  \mathbb{E}\left[ \frac{ g'(N_1)}{\sqrt{h'(N_1)}}
                       \sqrt{\frac{K_{\gamma}(N_1, N_2)h'(N_2)}{\gamma(N_1)
                       \gamma(N_2)}}  \sqrt{\frac{K_{\gamma}(N_1,
    N_2)h'(N_1)}{\gamma(N_1) 
    \gamma(N_2)}} \frac{g'(N_2)}{\sqrt{h'(N_2)}} \right]
  \\
  & \le \mathbb{E}\left[  \frac{ g'(N_1))^2}{h'(N_1)} 
\frac{K_{\gamma}(N_1,
    N_2)h'(N_2)}{\gamma(N_1) 
    \gamma(N_2)}\right] \\
                     & = \mathbb{E}\left[ \frac{ g'(N_1))^2}{h'(N_1)}
                   \mathbb{E} \left[ \frac{K_{\gamma}(N_1,
    N_2)h'(N_2)}{\gamma(N_1) 
    \gamma(N_2)} \, \bigg|\, N_1\right] \right] = \mathbb{E}\left[
                       \frac{ g'(N_1))^2}{h'(N_1)} 
 \big(-\mathcal{L}_{\gamma}h(N_1)   \big) \right] 
\end{align*}
where we applied \eqref{eq:67} once again.  Lower bounds are just
as easy to obtain, from \eqref{eq:stekovidentity}:
\begin{align*}
\mathbb{E} \left[ \big(
  -\mathcal{L}_{\gamma}h(N)\big) g'(N)\right]^2 \le
  \mathbb{E}[h(N)^2] \mathrm{Var}[g(N)]
\end{align*}
which leads to the fact that, 
if $h \in L^1(\gamma)$ is monotone,  then 
\begin{align}
  \label{eq:69}
  \frac{ \mathbb{E} \left[ \big(
  -\mathcal{L}_{\gamma}h(N)\big) g'(N)^2\right]}{
  \mathbb{E}[h(N)^2]} \le \mathrm{Var}[g(N)] \le  \mathbb{E}\left[ \left( \frac{ g'(N_1))^2}{h'(N_1)} \right)^2
 \big(-\mathcal{L}_{\gamma}h(N_1) \big)  \right].
\end{align}
In particular, taking $h = \mathrm{Id} $
\begin{equation}
  \label{eq:70}
  \sigma^2 \mathbb{E}[g'(N)]^2 \le \mbox{Var}[g(X)] \le
  \sigma^2\mathbb{E}[g'(X)^2]. 
\end{equation}
Identity \eqref{eq:70} is a rephrasing of Chernoff \cite{C81}'s
classical Gaussian bounds; identity \eqref{eq:69} is Klaassen's result
\cite{K85} in the Gaussian case. As we shall see in Section
\ref{sec:variance-expansion}, one can push the argumentation much
further and obtain upper and lower infinite variance expansions to any
order. In particular we recover the famous variance bound from
\cite{HK95} (which was already available in
\cite{papathanasiou1988variance}): {{for all $n \ge 1$,}}
\begin{equation}
  \label{eq:71}
  \sum_{j=1}^{2n} (-1)^j \frac{(\sigma^2)^j}{j!} \mathbb{E} [(g^{(j)}(N))^2]
  \le \mbox{Var}[g(N)] \le   \sum_{j=1}^{2n+1} (-1)^j
  \frac{(\sigma^2)^j}{j!} \mathbb{E} [g^{(j)}(N)^2]. 
\end{equation}
Moreover, 
we will prove in Section
\ref{sec:mult-extens-olkin} the \emph{matrix variance bound}:
\begin{align}\nonumber
  &   \begin{pmatrix}
      \mathrm{Var}[f(X)] & \mathrm{Cov}[f(X), g(X)] \\
      \mathrm{Cov}[f(X), g(X)] & \mathrm{Var}[g(X)]
    \end{pmatrix} 
    \le \label{eq:77}
\sigma^2 
 \begin{pmatrix}\mathbb{E} \left[\big(f'(X)\big)^2 \right]&
\mathbb{E} \left[f'(X)g'(X)\right]\\
\mathbb{E} \left[f'(X)g'(X)\right] & \mathbb{E}
  \left[\big(g'(X)\big)^2 \right]
\end{pmatrix} 
  \end{align}
(inequality in the above indicating that the difference is non
negative definite), hereby recovering the main result of
\cite{olkin2005matrix}.   Again, our approach applies
 to
basically \emph{any} univariate target distribution. 
\subsection{Some references}
\label{sec:variance-bounds}

Extensions of Chernoff's first order bound \eqref{eq:70} have, of
course, attracted much attention. The initiators of the stream of
research seem to be \cite{ch82} and \cite{C82}, although precursors
can be found e.g.\ in \cite{BrLi76}. Chen \cite{ch82} identified a way
to exploit Stein's operator for the Gaussian distribution not only to
simplify Chernoff's proof,
but also to propose a first order upper variance bound for the
multivariate Gaussian distribution.  \cite{C82} identifies the role
played by the Stein kernel (and its discrete version) to extend the
scope of Chernoff and Chen's bounds to a very wide class of
distributions. A remarkable generalization -- and one of the main
sources of inspiration behind the current article -- is \cite{K85} who
pinpoints the role plaid by Stein inverse operators in such bounds,
and obtains the inequalities in \eqref{eq:69} for virtually any
functional of any univariate probability distribution.  Other
fundamental early contributions in this topic are
\cite{chen1985poincare}, \cite{CP86,CaPa85}, or
\cite{karlin1993general} wherein various extensions are proposed
(e.g.\ Karlin deals with the entire class of log-concave
distributions).  A major breakthrough is due to
\cite{papathanasiou1988variance} who obtains infinite expansions for
continuous targets, with coefficients very close in spirit to those
that we shall propose in Section \ref{sec:variance-expansion}.
Papathanasiou's method of proof in \cite{papathanasiou1988variance} --
which rests in an iterative rewriting of the exact remainder in the
Cauchy Schwarz identity -- is also a direct inspiration for ours.
Such results open the way for a succesful line of research in
connection with Pearson's and Ord's system of distributions including
works such as \cite{CP89}, \cite{korwar1991characterizations},
\cite{johnson1993note}, \cite{papathanasiou1995characterization}, and
\cite{CP95}. Similar results, by different means, are Houdr\'e and
Kagan's famous arbitrary order bound \eqref{eq:71} from \cite{HK95},
extended to wide families of targets e.g.\ in
\cite{houdre1998interpolation,HP95}.
The remarkable articles \cite{DZ91} and \cite{Le95} both propose
similar minded considerations in very general settings. More recently,
the contributions \cite{APP07}, \cite{APP11} \cite{ap14} and
\cite{AfBalPa2014} and \cite{LS12a,LS16} begin to fully explore
connections with Stein's method.  In the Gaussian framework, an
enlightening first order matrix variance bound is proposed in
\cite{olkin2005matrix} for the Gaussian distribution, and in a more
general setting in \cite{afendras2011matrix} (and also to arbitrary
order). Finally, we mention \cite{saumard2018efron,
  saumard2017isoperimetric} and \cite{saumard2018weighted}'s
revisiting of this classical literature enticed us to begin the work
that ultimately led to the present paper. We end this literature
review by pointing to \cite{courtade2017existence,fathi2018stein,
  fathi2018higher} wherein fundamental contributions to the theory of
Stein kernels are provided in a multivariate setting. 

\subsection{Outline of the paper and main results}
\label{sec:outline-paper-main}

In Section \ref{sec:stein-diff} we recall the theory of canonical and
standardized Stein operators introduced in \cite{LRS16}, and adapt it
to our present framework. We introduce a (new) notion of inverse Stein
operator (Definition \ref{def:can_inverse}) along with a first
representation formula \eqref{eq:17}; we also introduce one of the
basic tools of the paper, namely a generalized indicator function
\eqref{eq:kerneldeff} which will serve throughout the paper. 

In
Section \ref{sec:covar-ident} we present the main covariance identities
(Lemmas \ref{lem:stIBP1}, \ref{lma:repformIII}) along with a second
probabilistic representation of the inverse operator in
\eqref{eq:24}. The first variance Klaassen-type bounds are provided in
Theorem \ref{thm:klaassenbounds}. They take on the same exact form as
\eqref{eq:69}. We stress that the method of proof is new, and
elementary.  Many standard examples are detailed. 

Section 4 begins with a third probabilistic representation of the
inverse operator in \eqref{eq:reprsntform}, which despite its
simplicity seems to be new.  A re-interpretation of the classical
Lagrange identity is provided in Lemma \ref{lma:prob}, in our
formalism; the basic building blocks of the theory are given in
\eqref{eq:29} and \eqref{eq:51}; these definitions are exactly the
correct tool for obtaining general arbitrary order variance expansions
detailed in Theorem~\ref{thm:var-}. We identify in \eqref{eq:2}
fundamental ``iterated Stein coefficients'' which we denote
$\Gamma_k^{\ell}(x)$; these generalize the classical Stein kernel and
we give some abstract examples in Lemma \ref{lma:gamma}, with concrete
examples also provided for several classical
targets. Section~\ref{sec:mult-extens-olkin} details our extension of
Olkin and Shepp's first-order matrix variance-inequality ``\`a la
Klaassen''. The final section, Section~\ref{sec:new-repr-stein},
applies our framework to obtain bounds on solutions of Stein
equations.

\section{Stein differentiation}
\label{sec:stein-diff}

Let $\mathcal{X} \subset \R$ and equip it with some $\sigma$-algebra
$\mathcal{A}$ and $\sigma$-finite measure $\mu$. Let $X$ be a random
variable on $\mathcal{X}$, with probability measure $P^X$ which is absolutely continuous with respect to $\mu$; we denote
$p$ the corresponding density, and
$\mathcal{S}(p) = \left\{ x \in \mathcal{X} : p(x)>0\right\}$.
Although we could in principle keep the discussion to come very
general, in order to make the paper more concrete and readable in the
sequel we shall restrict our attention to distributions satisfying the
following assumption.   

\

\noindent \textbf{Assumption A.} The measure $\mu$ is either the
counting measure on $\mathcal{X} = \Z$ or the Lebesgue measure on
$\mathcal{X} = \R$. If $\mu$ is the counting measure then there exist
$a, b \in \Z \cup \left\{-\infty, \infty \right\}$ such that
$\mathcal{S}(p) = [a, b]\cap \N$.  If $\mu$ is the Lebesgue measure
then $\overline{\mathcal{S}(p)} = [a, b]$ for
$a, b \in \R\cup \left\{ -\infty, \infty \right\}$.

\

{{The necessary feature of {Assumption A}  is that $\mu$ is translation-invariant; the assumption of compact support is made for convenience. }}

Let $\Delta^{\ell}f(x) = (f(x+\ell) - f(x))/\ell$ for all
$\ell \in \R$, with the convention that $\Delta^0f(x) = f'(x)$, with
$f'(x)$ the weak derivative defined Lebesgue almost everywhere.  We
denote $\mathrm{dom}(p, \Delta^{\ell})$ the collection of functions
$f : \R \to \R$ such that $\Delta^{\ell}f(x)$ exists and is finite at
all $x \in \mathcal{S}(p)$.  The next definitions come from
\cite{LRS16}.

\begin{defn}[Canonical Stein operators] \label{def:stei_op} Let
  $f \in \mathrm{dom}(p, \Delta^{\ell})$ and consider
  $\mathcal{T}_p^{\ell}f$ defined as
$  \mathcal{T}_p^{\ell} f(x) =  \frac{\Delta^\ell (f(x) p(x))}{p(x)}$
for all $x \in \mathcal{S}(p)$ and $ \mathcal{T}_p^{\ell} f(x) = 0$ for
$x \notin \mathcal{S}(p)$. Operator $\mathcal{T}_p^{\ell}$ is the
canonical ($\ell$-)Stein operator of $p$.  The cases $\ell=1$ and
$\ell = -1$ provide the {forward} and {backward} Stein operators,
denoted $\mathcal{T}_p^+$ and $\mathcal{T}_p^-$, respectively; the
case $\ell=0$ provides the differential Stein operator denoted
$\mathcal{T}_p$.
\end{defn}

\begin{defn}[Canonical Stein class]\label{def:can_class} We define
  $\mathcal{F}^{(0)}(p)$ as the class of functions in $L^1(p)$ which
  moreover have mean 0 with respect to $p$.  The canonical
  $\ell$-Stein class of $p$ is the collection
  $\mathcal{F}^{(1)}_{\ell}(p)$ of $f : \R \to \R$ such that
  $x \mapsto f(x) p(x) \in \mathrm{dom}(p, \Delta^{\ell})$ and
  $\mathcal{T}_p^{\ell} f \in \mathcal{F}^{(0)}(p)$ (i.e.\
  $\mathbb{E} \left[ \mathcal{T}_p^{\ell} f (X) \right]=0$).
\end{defn}  
\begin{rmk}
  For details on Stein class and operators, we refer to (i)
  \cite{LRS16} for the construction in an abstract setting, (ii)
  \cite{ley2015distances} for the construction in the continuous setting (i.e.\
  $\ell = 0$) and (iii) \cite{ES18} for the construction in the
  discrete setting (i.e.\ $\ell \in \left\{ -1, 1 \right\}$).

\end{rmk}

{In what follows we restrict attention to the following cases: 
$\ell =0$ and $\mu$ is the Lebesgue measure; or $\ell \in \{ -1, +1\}
$ and $\mu$ is counting measure. We note that for positive integer
$\ell$, we have  
$ \Delta^\ell f(x) = \Delta^1 \left( \frac{1}{\ell}
  \sum_{\ell=0}^{k-1} f(x+k) \right)$ and with this representation
results for other integer values of $\ell$ could be obtained. For
simplicity we do not consider this general case. }

By definition, the canonical operators $\mathcal{T}_p^{\ell}$ embed
$ \mathcal{F}_{\ell}^{(1)}(p)$ into $\mathcal{F}^{(0)}(p)$ (and in
particular $\mathbb{E} [ \mathcal{T}_p^{\ell}f(X)]= 0$ for all
$f \in \mathcal{F}_{\ell}^{(1)}(p)$). Inverting this operation is an
important part of the construction to come; the following result is
well-known and easy to obtain.

\begin{lma}[Representation formula I]\label{lem:can_inverse}
  Let $\ell \in \left\{ -1, 0, 1 \right\}$.  Let $p$ with support
  $\mathcal{S}(p)$  satisfy Assumption A and define 
 \begin{equation}
   \label{eq:kerneldeff}
   \chi^{\ell}(x, y) = \mathbb{I}_{x \le y - \ell (\ell
     +1)/2}. 
 \end{equation}
 Then for any $ \eta \in \mathcal{F}^{(0)}(p)$, the function defined
 on $\mathcal{S}(p)$ as
  \begin{equation}
    \label{eq:17}
 \mathcal{L}^{\ell}_p\eta(x)    = \frac{1}{p(x)}\mathbb{E} \left[
   \chi^{\ell}( X, x) \eta(X)\right] = \frac{1}{p(x)}\int
 \chi^{\ell}(y, x)
 \eta(y) 
  p(y) d\mu(y)
  \end{equation}
  satisfies (i)
  $\mathcal{L}^{\ell}_p\eta \in \mathcal{F}_{\ell}^{(1)}(p)$ and (ii)
  $ \mathcal{T}_{p}^{\ell}\mathcal{L}^{\ell}_p\eta(x) = \eta(x)$ for
  all $x \in \mathcal{S}(p)$. If furthermore
  $\eta \in \mathcal{F}_{\ell}^{(1)}(p)$ then also
  $ \mathcal{L}_{p}^{\ell}\mathcal{T}^{\ell}_p\eta(x) = \eta(x)$ for
  all $x \in \mathcal{S}(p)$.
\end{lma}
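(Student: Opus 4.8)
The plan is to prove everything by reducing to a single computation on the un-normalised function $F_\ell(x):=\mathcal{L}_p^\ell\eta(x)\,p(x)=\int\chi^\ell(y,x)\,\eta(y)\,p(y)\,d\mu(y)$, namely that its $\ell$-difference recovers the increment $\eta(x)p(x)$. First I would record that $F_\ell$ is well defined and bounded: since $|\chi^\ell|\le 1$ and $\eta\in\mathcal{F}^{(0)}(p)\subset L^1(p)$, one has $|F_\ell(x)|\le\|\eta\|_{L^1(p)}<\infty$, so $\mathcal{L}_p^\ell\eta(x)=F_\ell(x)/p(x)$ from \eqref{eq:17} is well defined on $\mathcal{S}(p)$.

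For statement (ii) I would compute $\Delta^\ell F_\ell$ by hand in each of the three cases, the key point being that the offset $\ell(\ell+1)/2$ built into $\chi^\ell$ in \eqref{eq:kerneldeff} is tuned exactly so that the relevant difference telescopes onto $\eta p$. When $\ell=0$ we have $F_0(x)=\int_{-\infty}^x\eta(y)p(y)\,dy$, whose weak derivative is $\eta(x)p(x)$ by the fundamental theorem of calculus. When $\ell=1$ the offset equals $1$, so $F_1(x)=\sum_{y\le x-1}\eta(y)p(y)$ and $\Delta^1 F_1(x)=F_1(x+1)-F_1(x)=\eta(x)p(x)$; when $\ell=-1$ the offset equals $0$, so $F_{-1}(x)=\sum_{y\le x}\eta(y)p(y)$ and $\Delta^{-1}F_{-1}(x)=F_{-1}(x)-F_{-1}(x-1)=\eta(x)p(x)$. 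Dividing by $p(x)$ gives $\mathcal{T}_p^\ell\mathcal{L}_p^\ell\eta=\eta$ on $\mathcal{S}(p)$. Statement (i) is then immediate: this computation shows $\mathcal{L}_p^\ell\eta\cdot p=F_\ell\in\mathrm{dom}(p,\Delta^\ell)$ (its $\ell$-difference $\eta p$ is finite on $\mathcal{S}(p)$), while $\mathcal{T}_p^\ell\mathcal{L}_p^\ell\eta=\eta\in\mathcal{F}^{(0)}(p)$ by hypothesis, so both requirements of Definition \ref{def:can_class} hold.

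For the converse I would set $g:=\mathcal{T}_p^\ell\eta\in\mathcal{F}^{(0)}(p)$ and apply (ii) to $g$, obtaining $\mathcal{T}_p^\ell(\mathcal{L}_p^\ell g)=g=\mathcal{T}_p^\ell\eta$. Hence $\phi:=\mathcal{L}_p^\ell g-\eta$ satisfies $\Delta^\ell(\phi p)=0$ on all of $\mathcal{S}(p)$, i.e.\ $\phi p$ is $\ell$-difference-constant, and it remains only to show this constant vanishes. In the two discrete cases Assumption A forces it: evaluating $\mathcal{T}_p^\ell\phi=0$ at the endpoint of $\mathcal{S}(p)=[a,b]\cap\N$ at which the offset pushes one argument out of the support --- at $x=b$ when $\ell=1$ (where $p(b+1)=0$) and at $x=a$ when $\ell=-1$ (where $p(a-1)=0$) --- yields $(\phi p)(b)=0$, respectively $(\phi p)(a)=0$, and telescoping $\Delta^\ell(\phi p)=0$ from that endpoint gives $\phi p\equiv 0$, hence $\phi\equiv 0$ on $\mathcal{S}(p)$. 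In the continuous case the same conclusion follows because finiteness of the weak derivative on $\overline{\mathcal{S}(p)}=[a,b]$, together with the zero extension of $p$ off its support, forces $\eta p$ to vanish at the endpoints, killing the boundary term in $\int_{-\infty}^x(\eta p)'\,dy=\eta(x)p(x)$.

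I expect the delicate step to be exactly this vanishing of the boundary term in the $\mathcal{L}_p^\ell\mathcal{T}_p^\ell=\mathrm{Id}$ direction: one must use Assumption A (translation invariance of $\mu$ and support equal to an interval) to guarantee that membership in $\mathcal{F}_\ell^{(1)}(p)$ really precludes a nonzero $\ell$-difference-constant --- equivalently a boundary mass in $\Delta^\ell(\eta p)$ --- rather than merely the mean-zero relation, which alone only pins the two endpoint values to be equal.
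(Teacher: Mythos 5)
The paper never actually proves this lemma --- it is introduced with ``the following result is well-known and easy to obtain'' and no proof follows --- so there is no in-text argument to compare against. Your computation of $\Delta^{\ell}F_{\ell}=\eta p$ in the three cases is correct and is surely the intended verification of (ii), and your deduction of (i) from it is complete: the offset $\ell(\ell+1)/2$ is indeed calibrated exactly so that the difference of the partial sum/integral telescopes onto $\eta(x)p(x)$, matching \eqref{eq:1} and \eqref{eq:4}.

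The genuine gap is in the converse identity $\mathcal{L}_p^{\ell}\mathcal{T}_p^{\ell}\eta=\eta$, and it is precisely the point you flag in your closing paragraph but then claim to have disposed of. Your endpoint argument needs the \emph{relevant} endpoint of $\mathcal{S}(p)$ to be finite: for $\ell=1$ you evaluate $\mathcal{T}_p^{\ell}\phi=0$ at $x=b$ and use $p(b+1)=0$, but for the Poisson target $b=+\infty$ and there is no such point; what is actually needed is $\eta(a)p(a)=0$ (vanishing of $\eta p$ at the endpoint from which $\mathcal{L}_p^{\ell}$ integrates), since $\mathcal{L}_p^{+}\mathcal{T}_p^{+}\eta(x)=\eta(x)-\eta(a)p(a)/p(x)$. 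The mean-zero requirement in Definition \ref{def:can_class} only yields that the two boundary values of $\eta p$ agree, and does not force either to vanish: the function $\eta=c/p$ on $\mathcal{S}(p)$ has $\mathcal{T}_p^{\ell}\eta\equiv0\in\mathcal{F}^{(0)}(p)$, hence belongs to $\mathcal{F}_{\ell}^{(1)}(p)$ as literally defined, yet $\mathcal{L}_p^{\ell}\mathcal{T}_p^{\ell}\eta=0\neq\eta$. Your continuous-case sentence has the same problem: ``finiteness of the weak derivative forces $\eta p$ to vanish at the endpoints'' is false if differentiability is only required on $\mathcal{S}(p)$ (take $p$ uniform on $[0,1]$ and $\eta\equiv1$); it becomes true only if one reads $\mathrm{dom}(p,\Delta^{0})$ as requiring $\eta p$ to be absolutely continuous on all of $\R$, so that the zero extension off the support pins the boundary values. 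The statement is therefore correct only under the strengthened reading of $\mathcal{F}_{\ell}^{(1)}(p)$ that the paper's own examples adopt ($f(0)=0$ for the binomial and Poisson forward classes, $f(0)=f(1)=0$ for the Beta, decay of $f(n)\lambda^{n}/n!$ for the Poisson backward class), and your proof should either invoke that hypothesis explicitly or record that without it the identity holds only up to an additive multiple of $1/p$, i.e.\ modulo the kernel of $\mathcal{T}_p^{\ell}$ on $\mathcal{S}(p)$.
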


{\begin{rmk} For $f \in {\rm{dom}}(p, \Delta^\ell)$ such that $ \mathbb{E} f(X) =0$  it holds that 
 $$
 \mathcal{L}^{\ell}_p f(x)    = \frac{1}{p(x)}\mathbb{E} \left[
   \left( 1 - \chi^{\ell}( X, x) \right) f(X)\right] = \frac{1}{p(x)}\int
\left( 1 -  \chi^{\ell}(y, x) \right) 
f(y) 
  p(y) d\mu(y). 
$$
Hence instead of  Definition \eqref{eq:kerneldeff} in the developments below a more general version 
$$
   \zeta^{\ell}(x, y) = \alpha \,  \mathbb{I}_{x \le y - \ell (\ell
     +1)/2} - \beta \,  \mathbb{I}_{x > y - \ell (\ell
     +1)/2} 
$$
could be used, where $\alpha + \beta =1$. 
For simplicity we use Definition \eqref{eq:kerneldeff}.
\end{rmk}

The expressions in Lemma \ref{lem:can_inverse} particularize, in the
three cases that interest us, to
$\chi^-(x, y) = \mathbb{I}_{x \le y}$ $\big(\ell = -1\big)$,
$ \chi^+(x, y) = \mathbb{I}_{x < y} $ $\big(\ell = 1\big)$ and
$\chi^0(x, y) = \mathbb{I}_{x \le y}$ $\big(\ell = 0\big)$.  
If
$\mu$ is the Lebesgue measure (and $\ell = 0$) then
\begin{equation}
  \label{eq:1}
  \mathcal{L}_p^0\eta(x) = \frac{1}{p(x)} \int_a^x \eta(u) p(u) du
\end{equation}
 whereas if
$\mu$ is the counting measure then 
\begin{align}
  \label{eq:4}
\mathcal{L}_p^+\eta(x) = \frac{1}{p(x)}
\sum_{j=a}^{x-1} \eta(j) p(j)  \mbox{ }(\ell=1)  \mbox{ and }
\mathcal{L}_p^-\eta(x) =
\frac{1}{p(x)} \sum_{j=a}^{x} \eta(j) p(j) \mbox{ }(\ell=-1).
\end{align}
In all cases the functions are extended on $\mathcal{X}$ through the
convention that $ \mathcal{L}_p^{\ell}\eta(x) = 0$ for all
$x \notin \mathcal{S}(p)$.

\begin{defn}[Canonical pseudo inverse Stein
  operator]\label{def:can_inverse}
The canonical pseudo-inverse Stein
   operator  is 
   \begin{equation}
     \label{eq:53}
     \mathcal{L}_p^{\ell} : L^1(p) \to
     \mathcal{F}^{(1)}_{\ell}(p) : h \mapsto \mathcal{L}^{\ell}_p(h -
     \mathbb{E}[h(X)]) 
   \end{equation}
with $\mathcal{L}^{\ell}_ph(x)$ defined in Lemma
\ref{lem:can_inverse}. 
\end{defn}

Since the operators $\Delta^{\ell}$ satisfy the product rule
$\Delta^{\ell}\big(f(x)g(x-\ell)\big) = \Delta^{\ell} f(x) g(x) + f(x)
\Delta^{-\ell}g(x)$, we immediately obtain that
$\mathcal{T}_p^{\ell} \big(f(x) g(x-\ell)\big) = \mathcal{T}_p^{\ell}f(x)
g(x) + f(x) \Delta^{-\ell}g(x)$ for all appropriate $f,
g$. This leads to
\begin{defn}[Standardizations of the canonical
  operator]\label{def:stand-canon-oper}
  Fix some $\eta \in \mathcal{F}^{(0)}(p)$. The $\eta$-standardized
  Stein operator is 
\begin{align}
  \label{eq:33}
  \mathcal{A}_pg(x) & = \mathcal{T}^{\ell}_p\big(\mathcal{L}^{\ell}_p\eta(\cdot)
                      g(\cdot-\ell)\big)(x) =  \eta(x)  g(x) +
                      \mathcal{L}^{\ell}_p\eta(x) 
                      \big(\Delta^{-\ell}g(x)\big) 
\end{align}
acting on the collection $\mathcal{F}(\mathcal{A}_p)$ of test
functions $g$ such that
$ \eta(\cdot)g(\cdot - \ell) \in \mathcal{F}_{\ell}^{(1)}(p)$.
\end{defn}

\begin{exm}[Binomial distribution]\label{ex:binom1}
Stein's method for the binomial distribution was first developed in \cite{ehm1991binomial}.   Let $p$ be the binomial density with parameters $(n, p)$. Then
  $\mathcal{S}(p) = [0,n]$, $\mathcal{F}_+^{(1)}(p)$ consists of bounded
  functions such that $f(0) = 0$ and $\mathcal{F}_-^{(1)}(p)$ of functions
  such that $f(n) = 0$.  Picking $\ell = 1$ and $\eta(x) = x-np$ then
  $\mathcal{L}_{n, p}^+ \eta(x) = -(1-p)x$ so that
    \begin{align}
      \label{eq:34}
      \mathcal{A}^1_{\mathrm{bin}(n, p)}g(x) =  (x-np)g(x) - (1-p)x\Delta^-g(x)
    \end{align}
    with corresponding class
    $ \mathcal{F}(\mathcal{A}_{\mathrm{bin}(n, p)})$ which contains
    all bounded functions $g : \Z \to \R$.  Picking $\ell = -1$ and
    $\eta(x) = x-np$ then $\mathcal{L}_{n, p}^- \eta(x) = - p(n-x)$ so
    that
    \begin{align}
\label{eq:35}
      \mathcal{A}^2_{\mathrm{bin}(n, p)}g(x) = 
      (x-np)g(x) - p(n-x)\Delta^+g(x)
    \end{align}
    acting on the same class as \eqref{eq:34}. 
\end{exm}

\begin{exm}[Poisson distribution]\label{ex:poi1}
Stein's method for the Poisson distribution originates in\cite{C75}. 
  Let $p$ be the Poisson density with parameter $\lambda$. Then
  $\mathcal{S}(p) = \N$, $\mathcal{F}_+(p)$ consists of bounded functions
  such that $f(0) = 0$ and $\mathcal{F}_-(p)$ of functions such that
  $\lim_{n\to \infty}f(n)\lambda^n/n!= 0$. 
Picking $\ell = 1$ and $ \eta(x) = x-\lambda$ then
      $\mathcal{L}_{ \lambda}^+ \eta(x) = -x$  so that 
    \begin{align}
\label{eq:41Poisson}
      \mathcal{A}^1_{\mathrm{Poi}(\lambda)}g(x) =  (x-\lambda) g(x) -x
      \Delta^-g(x) 
    \end{align}
    acting on $ \mathcal{F}(\mathcal{A}^1_{\mathrm{Poi}(\lambda)})$
    which contains all bounded functions $g : \Z \to \R$. 
 Picking $\ell = -1$ and  $\eta(x) = x-\lambda$ then
    $\mathcal{L}_{\lambda}^- \eta(x) = -\lambda$ so that 
    \begin{align}
\label{eq:40}
      \mathcal{A}^2_{\mathrm{Poi}( \lambda)}g(x) = (x-\lambda)g(x) -
      \lambda \Delta^+g(x)
    \end{align}
    acting on the same class as \eqref{eq:41Poisson}.

\end{exm}

\begin{exm}[Beta distribution]\label{ex:beta1}
  Let $p$ be the Beta density with parameters $(\alpha, \beta)$. Then
  $\mathcal{S}(p) = [0,1]$ and $\mathcal{F}(p)$ consists of 
  functions such that $f(0) = f(1) = 0$.  If
  $\eta(x) = x - \alpha/(\alpha+\beta)$ then
  $\mathcal{L}_{\alpha, \beta}\eta(x) = -\frac{x(1-x)}{\alpha + \beta}$ leading to
  the operator
    \begin{align}
\label{eq:41beta}
      \mathcal{A}_{\mathrm{Beta}(\alpha, \beta)}g(x) =
      \Big(x-\frac{\alpha}{\alpha+\beta}\Big) g(x) -\frac{
      x(1-x)}{\alpha+\beta}  g'(x)
    \end{align}
    with corresponding class
    $ \mathcal{F}(\mathcal{A}_{\mathrm{Beta}(\alpha, \beta)})$ which
    contains all {{differentiable, bounded}} functions
    $g : \R \to \R$; {{see \cite{Do14} and \cite{goldstein2013stein}
        for more details and related Stein operators.}}
\end{exm}

\begin{exm}[Gamma distribution]\label{ex:gamma1}
  Let $p$ be the gamma density with parameters $(\alpha, \beta)$. Then
  $\mathcal{S}(p) = (0, \infty)$ ($\alpha<1$) or
  $\mathcal{S}(p) = [0, \infty)$ ($\alpha \ge 1$) and $\mathcal{F}(p)$
  consists of functions such that $f(0) = 0$ all integrals exist.  If
  $\eta(x) = x - \alpha\beta$ then $\mathcal{L}_p\eta(x) = - \beta x$
  leading to the operator
    \begin{align}
\label{eq:41gamma}
      \mathcal{A}_{\mathrm{gamma}(\alpha, \beta)}g(x) =
      \big(x-\alpha\beta\big) g(x) -\beta x  g'(x)
    \end{align}
    with corresponding class
    $ \mathcal{F}(\mathcal{A}_{\mathrm{Gamma}(\alpha, \beta)})$ which
    contains all {{differentiable}} functions $g : \R \to \R$ such
    that {{the function $x g(x)$ is bounded; see for example
        \cite{gaunt2015chi} and \cite{Luk1994} for more details and
        related Stein operators}}. 
\end{exm}

\begin{exm}[A general example]\label{ex:genera}
  Let $p$ satisfy Assumption A  and suppose that it has finite mean
  $\mu$.   If
  $\eta(x) = x - \mu$ then
  $\mathcal{L}_p\eta(x) = -\tau_p(x)$ is the so-called \emph{Stein
    kernel} of $p$,  leading to
  the operator
    \begin{align}
\label{eq:41}
      \mathcal{A}_pg(x) =
      \big(x-\mu\big) g(x) - \tau_p(x) g'(x)
    \end{align}
    with corresponding class $ \mathcal{F}(\mathcal{A}_p)$ which
    contains all functions $g : \R \to \R$ such that
    $\tau_p g \in \mathcal{F}(p)$.  {{ Hence in the general case
        \eqref{eq:33} can be viewed as generalisation of the Stein
        kernel.  Implications of this observations are explored in
        detail in \cite{ES18}. }}
\end{exm}

\section{Stein covariance identities and Klaassen-type bounds}
\label{sec:covar-ident}

The construction from the previous  sections is tailored to ensure
that we easily deduce the following family of ``Stein covariance
identities'' (e.g.\ use $\mathbb{E}[ \mathcal{A}_p^1g(X)] = 0$ for
  $\mathcal{A}_p^1$ as defined in
  \eqref{eq:33}). {{Lemma~\ref{lem:stIBP1} follows directly from
      the Stein product rule  in \cite[Theorem 3.24]{LRS16}. 
\begin{lma}[Stein IBP formulas]
\label{lem:stIBP1}
Let $X \sim p$. Then, for all $c \in \mathcal{F}_{\ell}^{(1)}(p)$ on
the one hand, and all $\eta \in \mathcal{F}^{(0)}(p)$ on the other
hand, we have
\begin{align}
  \label{eq:20}
&   \mathbb{E} \left[ c(X) \Delta^{-\ell}g(X) \right] =- \mathbb{E}
  \left[ {{ \{}} \mathcal{T}_p^{\ell}c(X) {{ \}}}g(X) \right] \\
  \label{eq:21}
  & \mathbb{E} \left[ - {{ \{}}  \mathcal{L}_p^{\ell}\eta(X) {{ \}}}\Delta^{-\ell}g(X)
    \right] = \mathbb{E} 
  \left[\eta(X) g(X) \right] 
\end{align}
for all $g$ such that
$c(\cdot)g(\cdot - \ell) \in \mathcal{F}_{\ell}^{(1)}(p)$ (identity
\eqref{eq:20}) and all $g$ such that
$\mathcal{L}_p^{\ell}\eta(\cdot)g(\cdot - \ell) \in
\mathcal{F}_{\ell}^{(1)}(p)$ (identity \eqref{eq:21}). 
\end{lma}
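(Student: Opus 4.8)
The plan is to derive both identities directly from the product rule for the canonical operator $\mathcal{T}_p^{\ell}$ recorded just before Definition \ref{def:stand-canon-oper}, namely
\begin{equation*}
  \mathcal{T}_p^{\ell}\big(f(\cdot)\,g(\cdot-\ell)\big)(x) = \mathcal{T}_p^{\ell}f(x)\,g(x) + f(x)\,\Delta^{-\ell}g(x),
\end{equation*}
which is itself an immediate consequence of the Leibniz-type identity $\Delta^{\ell}\big(f(x)g(x-\ell)\big) = \Delta^{\ell}f(x)\,g(x) + f(x)\,\Delta^{-\ell}g(x)$ for the difference operators. Everything then reduces to taking expectations and invoking the mean-zero property of the Stein class.

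First I would prove \eqref{eq:20}. Taking $f = c$ in the product rule, evaluating at $X \sim p$, and integrating, the hypothesis that $c(\cdot)g(\cdot-\ell) \in \mathcal{F}_{\ell}^{(1)}(p)$ is exactly what is needed: by Definition \ref{def:can_class} the operator $\mathcal{T}_p^{\ell}$ maps $\mathcal{F}_{\ell}^{(1)}(p)$ into the mean-zero class $\mathcal{F}^{(0)}(p)$, so $\mathbb{E}\big[\mathcal{T}_p^{\ell}\big(c(X)g(X-\ell)\big)\big]=0$. The left-hand side of the product rule thus integrates to zero, and rearranging the two terms on the right yields $\mathbb{E}\big[\{\mathcal{T}_p^{\ell}c(X)\}g(X)\big] = -\,\mathbb{E}\big[c(X)\Delta^{-\ell}g(X)\big]$, which is \eqref{eq:20}.

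Next I would obtain \eqref{eq:21} by specializing \eqref{eq:20} to $c = \mathcal{L}_p^{\ell}\eta$. Here I invoke Lemma \ref{lem:can_inverse}: for $\eta \in \mathcal{F}^{(0)}(p)$, its part (i) guarantees $\mathcal{L}_p^{\ell}\eta \in \mathcal{F}_{\ell}^{(1)}(p)$ (so that $c$ is an admissible test function in \eqref{eq:20}), while part (ii) provides the inversion identity $\mathcal{T}_p^{\ell}\mathcal{L}_p^{\ell}\eta(x) = \eta(x)$ on $\mathcal{S}(p)$. Substituting and replacing $\mathcal{T}_p^{\ell}\mathcal{L}_p^{\ell}\eta$ by $\eta$ gives $\mathbb{E}[\eta(X)g(X)] = -\,\mathbb{E}\big[\{\mathcal{L}_p^{\ell}\eta(X)\}\Delta^{-\ell}g(X)\big]$, which is precisely \eqref{eq:21}.

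I expect the only genuine subtlety to be the bookkeeping of function classes and integrability, rather than any analytic difficulty. The single point that must be checked is that each expectation appearing is finite and that the mean-zero property may legitimately be applied; but this is built into the standing hypotheses, since membership of $c(\cdot)g(\cdot-\ell)$ (resp.\ of $\mathcal{L}_p^{\ell}\eta(\cdot)g(\cdot-\ell)$) in $\mathcal{F}_{\ell}^{(1)}(p)$ is exactly the condition under which $\mathcal{T}_p^{\ell}$ of that product has vanishing expectation. In the discrete cases $\ell = \pm 1$ this amounts to the telescoping sum of $\Delta^{\ell}\big(c(x)g(x-\ell)p(x)\big)$ having no surviving boundary term at $a$ and $b$, which is encoded in the definition of $\mathcal{F}_{\ell}^{(1)}(p)$ (cf.\ the boundary conditions appearing in Examples \ref{ex:binom1}--\ref{ex:gamma1}); in the continuous case $\ell = 0$ it is the corresponding statement that $c(x)g(x)p(x)$ vanishes at the endpoints of $\mathcal{S}(p)$.
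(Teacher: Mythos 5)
Your proposal is correct and follows essentially the same route as the paper, which derives the lemma directly from the product rule $\mathcal{T}_p^{\ell}\big(f(\cdot)g(\cdot-\ell)\big)(x) = \mathcal{T}_p^{\ell}f(x)g(x) + f(x)\Delta^{-\ell}g(x)$ (citing the Stein product rule of \cite{LRS16}), combined with the mean-zero property defining $\mathcal{F}_{\ell}^{(1)}(p)$ and, for \eqref{eq:21}, the inversion $\mathcal{T}_p^{\ell}\mathcal{L}_p^{\ell}\eta=\eta$ from Lemma \ref{lem:can_inverse}. Your version simply spells out the bookkeeping that the paper leaves implicit.
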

These identities provide powerful handles on the target distribution
$X\sim p$. Combining them for instance with a second representation
for Stein operator $\mathcal{L}_p^{\ell}$ given in \eqref{eq:24} leads
to the  following result.

\begin{lma}[Representation formula II]\label{lma:repformIII}
Define
\begin{equation}
  \label{eq:15}
  K_p^{\ell}(x, x')  = 
\mathbb{E} [\chi^{\ell}(X, x)
    \chi^{\ell}(X, x') \big]  -
    \mathbb{E} [\chi^{\ell}(X, x)\big] \mathbb{E}\big[\chi^{\ell}(X, x')
    \big]. 
\end{equation} 
Then
\begin{align}
     \mathrm{Cov}[h(X), g(X)] & = \mathbb{E}
     \bigg[-\mathcal{L}_p^{\ell}h(X) \Delta^{-\ell}g(X)
                                \bigg] \label{eq:25} \\
& = \mathbb{E} \left[ \Delta^{-\ell} h(X)
     \frac{ K_p^{\ell}(X, X')}{p(X)p(X')}\Delta^{-\ell} g(X')\right]  \label{eq:14}\\
  \label{eq:18}
  &\bigg( = \int_{\mathcal{S}(p)}\int_{\mathcal{S}(p)}  \Delta^{-\ell} h(x)
    K_p^{\ell}(x, 
    x')  \Delta^{-\ell} g(x') d\mu(x)  d\mu(x')  \bigg)
   \end{align}
  for all $h \in L^1(p)$ and $g$ such that {the integrals
     exist}. Moreover, $ K_p^{\ell}(x, x') $ is positive and bounded.
     If $\mathcal{F}^{(0)}(p)$ is dense in  $ L^1(p)$, then 
  \begin{equation}
\label{eq:24}
    -\mathcal{L}_p^{\ell}h(x')  = \mathbb{E}\left[ \frac{
                                  K_p^{\ell}(X, x')}{p(X)p(x')}
                                  \Delta^{-\ell} h(X) \right]
  \end{equation}
for all $h \in L^1(p)$.    
\end{lma}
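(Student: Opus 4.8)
The plan is to establish the three successive expressions for the covariance in the order \eqref{eq:25} $\to$ \eqref{eq:24} $\to$ \eqref{eq:14} $\to$ \eqref{eq:18}, with the second representation \eqref{eq:24} of the inverse operator as the analytic heart of the argument, mirroring the Gaussian warm-up (Saumard's lemma). I would first obtain \eqref{eq:25} as an immediate consequence of the second Stein IBP formula \eqref{eq:21}: setting $\eta = h - \mathbb{E}[h(X)] \in \mathcal{F}^{(0)}(p)$, Definition~\ref{def:can_inverse} gives $\mathcal{L}_p^{\ell} h = \mathcal{L}_p^{\ell}\eta$, while the right-hand side of \eqref{eq:21} becomes $\mathbb{E}[(h(X)-\mathbb{E}[h(X)])g(X)] = \mathrm{Cov}[h(X),g(X)]$. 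Next I record the structural facts about $K_p^{\ell}$ from \eqref{eq:15}. Writing $a = x - \ell(\ell+1)/2$ and $b = x' - \ell(\ell+1)/2$, the kernel is exactly $\mathrm{Cov}[\mathbb{I}_{X\le a},\mathbb{I}_{X\le b}]$; assuming without loss of generality $a \le b$, one gets $K_p^{\ell}(x,x') = \mathbb{P}[X\le a]\,\mathbb{P}[X> b]$, which is manifestly symmetric, nonnegative and bounded by $1$. This piecewise form is what I use later.

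The key computation is to identify $\mathcal{L}_p^{\ell}$ evaluated at the generalized indicator. Plugging $h = \chi^{\ell}(\cdot, x')$ into the representation formula \eqref{eq:17}--\eqref{eq:53} and recognizing \eqref{eq:15}, I obtain, as a function of $z$,
\[
\mathcal{L}_p^{\ell}\big(\chi^{\ell}(\cdot, x')\big)(z) = \frac{1}{p(z)}\Big(\mathbb{E}[\chi^{\ell}(X,z)\chi^{\ell}(X,x')] - \mathbb{E}[\chi^{\ell}(X,z)]\,\mathbb{E}[\chi^{\ell}(X,x')]\Big) = \frac{K_p^{\ell}(z,x')}{p(z)}.
\]
To prove \eqref{eq:24} I start from the representation $-\mathcal{L}_p^{\ell} h(x') = -\frac{1}{p(x')}\mathrm{Cov}[\chi^{\ell}(X,x'), h(X)]$ and apply \eqref{eq:21} once, this time with $\eta$ the centered indicator $\chi^{\ell}(\cdot,x') - \mathbb{E}[\chi^{\ell}(X,x')]$ and test function $h$. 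Since $\mathbb{E}[\eta(X)h(X)] = \mathrm{Cov}[\chi^{\ell}(X,x'),h(X)]$, formula \eqref{eq:21} expresses this covariance through $\mathcal{L}_p^{\ell}\eta$, and the identity just displayed, $\mathcal{L}_p^{\ell}\eta(z) = K_p^{\ell}(z,x')/p(z)$, turns it into \eqref{eq:24} after dividing by $p(x')$ and tracking signs. It is essential here to keep the indicator inside $\mathcal{L}_p^{\ell}$ (a smoothing operation) rather than differentiate it, since for $\ell = 0$ the derivative of $\chi^0(\cdot, x')$ is a Dirac mass; applying \eqref{eq:21} in this orientation places the difference operator on the regular function $h$ instead.

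Finally I substitute \eqref{eq:24} into \eqref{eq:25}. Introducing an independent copy $X'$ of $X$ and using Fubini,
\[
\mathrm{Cov}[h(X),g(X)] = \mathbb{E}\big[\{-\mathcal{L}_p^{\ell} h(X)\}\Delta^{-\ell}g(X)\big] = \mathbb{E}\left[\Delta^{-\ell}h(X')\,\frac{K_p^{\ell}(X',X)}{p(X')p(X)}\,\Delta^{-\ell}g(X)\right],
\]
and symmetry of $K_p^{\ell}$ together with relabeling $X \leftrightarrow X'$ gives \eqref{eq:14}; then \eqref{eq:18} follows by writing the double expectation as an integral against $p(x)p(x')\,d\mu(x)\,d\mu(x')$, the densities cancelling against the denominator and the support reducing to $\mathcal{S}(p)$. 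The main obstacle is not algebraic but is the bookkeeping of admissibility: each use of Lemma~\ref{lem:stIBP1} requires the relevant products to lie in $\mathcal{F}_{\ell}^{(1)}(p)$, which is delicate for the bounded-but-irregular indicator and for a general $h \in L^1(p)$. This is precisely where the hypothesis that $\mathcal{F}^{(0)}(p)$ be dense in $L^1(p)$ enters: I would first establish \eqref{eq:24} for $h$ in a convenient dense subclass on which all class conditions hold, and then extend to arbitrary $h \in L^1(p)$ by approximation, using the boundedness of $K_p^{\ell}$ to control the limit of the right-hand side. Justifying the Fubini interchange under the stated ``the integrals exist'' proviso is the remaining technical point.
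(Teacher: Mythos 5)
Your proposal is correct, and it rests on exactly the same two ingredients as the paper's proof: the IBP formula \eqref{eq:21} applied with the centered generalized indicator $\chi^{\ell}(\cdot,x')$ playing the role of the zero-mean function, and the observation that $\mathcal{L}_p^{\ell}$ evaluated at that indicator produces the kernel $K_p^{\ell}(\cdot,x')/p(\cdot)$ (the paper's \eqref{eq:22}). The difference is in the order of deduction. The paper first establishes \eqref{eq:14} by conditioning on $X$ inside \eqref{eq:25} and applying \eqref{eq:21} to the inner expectation $\mathbb{E}[\bar{h}(X')\chi^{\ell}(X',x)]$; it then obtains the pointwise identity \eqref{eq:24} \emph{a posteriori}, by comparing \eqref{eq:25} with \eqref{eq:14} over all admissible $g$ and invoking the denseness of $\mathcal{F}^{(0)}(p)$ in $L^1(p)$ to identify the two ``integrands''. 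You instead isolate that inner computation as a standalone, constructive proof of \eqref{eq:24}, and then substitute it into \eqref{eq:25} to recover \eqref{eq:14} by Fubini and symmetry of the kernel. Your route is arguably cleaner in that \eqref{eq:24} is not obtained by a variational identification; the price is that the denseness hypothesis migrates to an approximation step (proving \eqref{eq:24} on a convenient dense class of $h$ and passing to the limit using boundedness of $K_p^{\ell}$), and consequently your derivation of \eqref{eq:14} for general $h \in L^1(p)$ inherits that approximation, whereas the paper's direct conditional-expectation argument yields \eqref{eq:14} without any appeal to denseness (which is why the denseness hypothesis appears in the statement only for \eqref{eq:24}). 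Your verification of positivity and boundedness of $K_p^{\ell}$ via the piecewise form $\mathbb{P}[X\le a]\,\mathbb{P}[X> b]$ coincides with the paper's ``alternative'' argument, and your sign- and admissibility-bookkeeping is consistent with Lemma \ref{lem:stIBP1}.
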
 
\begin{proof}
  Let $\bar{h}(x) = h(x) - \mathbb{E}[h(X)]$. {{Note that $\Delta^{\ell} {\bar{h}} = \Delta^\ell h.$}} 
 Equality
  \eqref{eq:25} follows from \eqref{eq:21} and the fact that
  $ \mathrm{Cov}[h(X), g(X)) = \mathbb{E}[(h(X) - \mathbb{E}[h(X)])
  g(X)]$. To obtain \eqref{eq:14} we start from \eqref{eq:25} and use
  representation \eqref{eq:17}:
\begin{align*}
  \mathrm{Cov}[h(X), g(X)]  & = \mathbb{E}
                              \bigg[-\mathcal{L}_p^{\ell}\bar{h}(X) \Delta^{-\ell}g(X) \bigg]  
                            = -  \mathbb{E}
                              \bigg[\bar{h}(X') \frac{\chi^{\ell}(X', X) }{p(X)} \Delta^{-\ell}g(X) \bigg]
\end{align*}
with $X'$ an independent copy of $X$. 
{{Taking conditional expectations gives
\begin{align*}
  \mathrm{Cov}[h(X), g(X)]  & -  \mathbb{E}
                              \bigg[\mathbb{E} \bigg[ \bar{h}(X') \chi^{\ell}(X', X)  | X \bigg]  \frac{1}{p(X)} \Delta^{-\ell}g(X) \bigg].
\end{align*}
}} 
 Next, {{let $\rho_{x}(x')  = \chi^\ell(x',x)$ and view this as a function of $x'$, to obtain 
 from \eqref{eq:21}, 
 $$  \mathbb{E} \bigg[\rho_{x}(X')  \bar{h}(X') \bigg]  =  \mathbb{E} \bigg[{\bar{\rho_{x}}} (X')  \bar{h}(X') \bigg] 
 =  \mathbb{E} \bigg[ - (\mathcal{L}_p^\ell {\bar{\rho_{x}}}(X')) \Delta^{-\ell} {h}(X') \bigg] 
.
 $$
 }}Denoting
$ \mathcal{L}_{p, x'}^{\ell}$ as operator $\mathcal{L}_p^{\ell}$
acting on the {{first}} component, {{indexed by}} $x'$, 
we 
get 
\begin{align}
  \mathrm{Cov}[h(X), g(X)]  
                            &    =  \mathbb{E}
                              \bigg[ \Delta^{-\ell}h(X')\frac{
                              \mathcal{L}_{p, X'}^{\ell}(
                              \chi^{\ell}(X', X))}{p(X)}  
                              \Delta^{-\ell}g(X)    \bigg].\label{eq:23}
\end{align}
Using again
\eqref{eq:17} we have
\begin{align}
  \mathcal{L}_{p, x'}^{\ell}( \chi^{\ell}(x', x)) & = 
                                                    \frac{1}{ p(x') }   \mathbb{E} [\chi^{\ell}(X'', x') \big(\chi^{\ell}(X'', x) -
                                                    \mathbb{E}[\chi^{\ell}(X'',
                                                    x)]\big)  ]
                                                    \nonumber \\
                                                  &   \label{eq:22} 
                                                    = \frac{1}{p(x')}\big(\mathbb{E} [\chi^{\ell}(X'', x') \chi^{\ell}(X'', x) \big] -
                                                    \mathbb{E} [\chi^{\ell}(X'', x')\big] \mathbb{E}\big[\chi^{\ell}(X'',
                                                    x) \big] \big)  
\end{align}
with $X''$ another independent copy of $X$.   Plugging this last
identity into the covariance representation \eqref{eq:23} we obtain the
second equality in \eqref{eq:14}. To see that $K_p^{\ell}(x, x') \ge
0$ for all $x, x'$ it suffices to notice that $x'\mapsto
\chi^{\ell}(x', x)$ is decreasing and we can apply forthcoming
Proposition \ref{prop:applirep}. {{
Alternatively,  is suffices to note that for all $x,y$,
$$\mathbb{P} (X \le x \wedge y) - \mathbb{P} (X \le x )\mathbb{P} (X \le  y)= 
\mathbb{P} (X \le x \wedge y) ( 1 - \mathbb{P} (X \le  x \vee y)) \ge 0.
$$
}}{{Equality~\ref{eq:24} follows  from  \eqref{eq:14} and \eqref{eq:17} and the assumption that $\mathcal{F}^{(0)}(p)$ is dense in  $ L^1(p)$.}}
\end{proof}

\begin{exm}[Binomial distribution]\label{ex:binom2}
From previous developments we get 
\begin{equation}
  \label{eq:26}
\mathrm{Cov}[X, g(X)] = \mathbb{E} \left[ (1-p)X
    \Delta^{-} g(X) \right] = p  \mathbb{E} \left[ (n-X)
    \Delta^+g(X)\right] 
\end{equation}
which is valid for all  functions $g : \Z \to \R$ that are bounded on
$[0, n]$. Combining the two identities   we also arrive at 
\begin{equation}\label{eq:37}
\mathrm{Cov}[X, g(X)]= \mathrm{Var}[X]
  \mathbb{E}[\nabla_{\mathrm{bin}(n, p)}g(X)]
\end{equation}
with $\nabla_{\mathrm{bin}(n, p)}$ the ``natural'' binomial gradient  $\nabla_{\mathrm{bin}(n, p)} g(x) =
\frac{x}{n} \Delta^-g(x) + 
\frac{n-x}{n} 
\Delta^+g(x)$ from \cite{hillion2011natural}. 
\end{exm}

\begin{exm}[Poisson distribution]\label{ex:poiss2}
From previous developments we get 
\begin{equation}
  \label{eq:3}
\mathrm{Cov}[X, g(X)] = \mathbb{E} \left[ X
    \Delta^-g(X) \right] = \mathbb{E} \left[
      \lambda \Delta^+g(X) \right]
\end{equation}
for all functions on $\Z$ such that $p(x) g(x) \to 0$ as
$x\to \infty$.  Similarly as in example \ref{ex:binom2} we combine the
two identities to obtain
\begin{equation}
  \label{eq:36}
\mathrm{Cov}[X, g(X)] =  \mathrm{Var}[X] \mathbb{E} \left[
    \nabla_{\mathrm{Poi}(\lambda)} g(X) \right], 
\end{equation}
this time with $\nabla_{\mathrm{Poi}(\lambda)} g(x) = \frac{1}{2}\left(\frac{x}{\lambda} \Delta^-g(x) +
\Delta^+g(x)\right)$. 
\end{exm}

\begin{exm}[Beta distribution]\label{ex:beta2}
From previous developments we get 
\begin{equation}
  \label{eq:10}
\mathrm{Cov}[X, g(X)] = \frac{1}{\alpha+\beta}  \mathbb{E} \left[ X (1-X)
    g'(X) \right]
\end{equation}
\end{exm}

\begin{rmk}
Identity \eqref{eq:22} and the kernel $K_p(x, x')$ in \eqref{eq:15}
will turn out very useful in future developments. It is informative to
particularize this kernel in three cases of interest to us: letting
$P$ denote the cdf of $p$ and $\min(x, x') = x \wedge x'$, we have
\begin{align*}
K_p^{0}(x, x') & = K_p^-(x, x') =  P(x \wedge x') -
                          P(x) P(x') \\
K_p^+(x, x') & = P(x\wedge x'-1) - P(x-1) P(x'-1). 
\end{align*}
Several illustrations are provided in Figure \ref{fig:figKPP}.

\end{rmk}

 \begin{figure}
\begin{subfigure}{.33\textwidth}
  \centering
  \includegraphics[width=.8\linewidth]{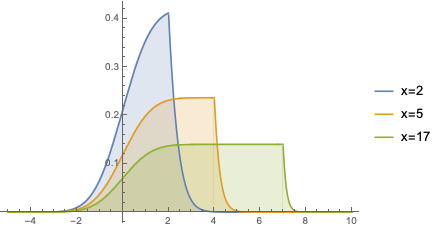}
  \caption{}
  \label{fig:sfig11}
\end{subfigure}%
\begin{subfigure}{.33\textwidth}
  \centering
  \includegraphics[width=.8\linewidth]{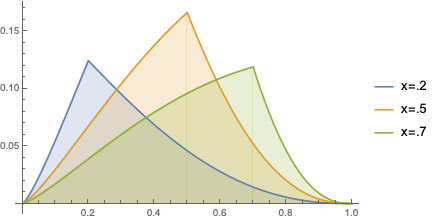}
  \caption{}
  \label{fig:sfig21}
\end{subfigure}
\begin{subfigure}{.33\textwidth}
  \centering
  \includegraphics[width=.8\linewidth]{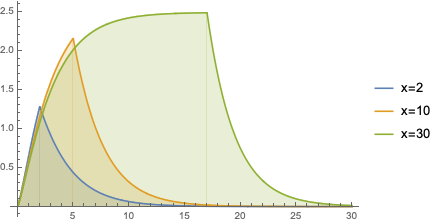}
  \caption{}
  \label{fig:sfig31}
\end{subfigure}

\begin{subfigure}{.33\textwidth}
  \centering
  \includegraphics[width=.8\linewidth]{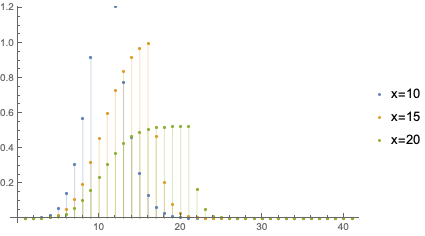}
  \caption{}
  \label{fig:sfigbin1}
\end{subfigure}%
\begin{subfigure}{.33\textwidth}
  \centering
  \includegraphics[width=.8\linewidth]{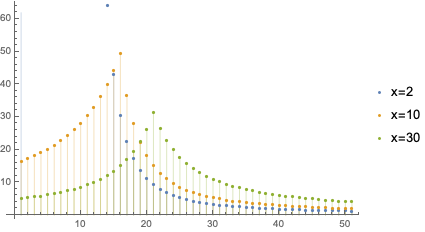}
  \caption{}
  \label{fig:sfigpoi1}
\end{subfigure}
\begin{subfigure}{.3\textwidth}
  \centering
  \includegraphics[width=.8\linewidth]{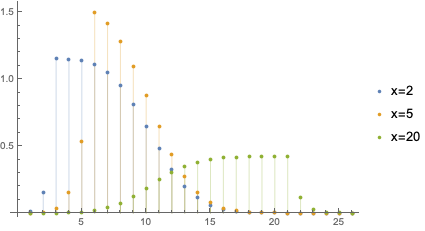}
  \caption{}
  \label{fig:sfighyp1}
\end{subfigure}
\caption{{\it The functions
    $x' \mapsto \mathcal{L}_{p, x'}^{\ell}( \chi^{\ell}(x', x)) =
    K_p^{\ell}(x, x')/p(x)$ for different (fixed) values of $x$ and
    $p$ the standard normal distribution (Figure~\ref{fig:sfig11});
    Beta distribution with parameters 1.3 and 2.4
    (Figure~\ref{fig:sfig21}); Gamma distribution with parameters 1.3
    and 2.4 (Figure~\ref{fig:sfig31}); binomial distribution with
    parameters $(20, .2)$ (Figure~\ref{fig:sfigbin1}); Poisson
    distribution with parameter 15 (Figure~\ref{fig:sfigpoi1});
    hypergeometric distribution with parameters 100, 50 and 500
    (Figure~\ref{fig:sfighyp1})}.
\label{fig:figKPP}}
\end{figure}

Theorem \ref{thm:klaassenbounds} gives upper and lower bounds of
functionals of random variables with coefficients (weights) expressed
in terms of Stein operators. This result provides so-called weighted
Poincar\'e inequalities such as those described in
\cite{saumard2018weighted}; it also contains the main result from
\cite{K85}. While the lower bound -- some type of Cramer-Rao bound --
has long been known to have a very simple proof via Stein operators, a
proof for the upper bounds has always been either elusive or rather
complicated outside of the gaussian case (see e.g.\ \cite{saumard2018weighted}). The
representation formulae obtained for Stein integral operators will turn out to simplify
the work considerably.

\begin{thm}[Klaassen bounds] \label{thm:klaassenbounds} For any
  $c \in \mathcal{F}_{\ell}^{(1)}(p)$ we have for all $f$ such that
{$\bar{f}^2\in L^1(p)$}:
 \begin{equation}
\label{eq:13}
 \mathrm{Var}[f(X)]  \ge \frac{\mathbb{E}\left[ c(X) (\Delta^{-\ell}f(X))\right]^{2}}{\mathbb{E} \left[
\big(\mathcal{T}_p^{\ell}c(X)\big)^2 \right]}
  \end{equation}
  with equality if and only if there exist $\alpha, \beta$ real
  numbers such that 
  $f(x) = \alpha \mathcal{T}_p^{\ell}c(x) + \beta$ for all
  $x \in \mathcal{S}(p)$.   Moreover,  if  $h \in L^1(p)$ is
  decreasing then for all $f$ such that
  $\mathcal{L}_p^{\ell}h(\cdot) f(\cdot-\ell) \in
  \mathcal{F}^{(1)}_\ell(p)$ we have
  \begin{equation}
    \label{eq:19}
  \mathrm{Var}[f(X)] \le \mathbb{E}\left[ (\Delta^{-\ell}f(X))^2
      \frac{-\mathcal{L}_p^{\ell}h(X)}{\Delta^{-\ell}h(X)} \right]
  \end{equation}
  with equality if and only if there exist $\alpha, \beta$ real
  numbers such that 
  $f(x) = \alpha h(x) + \beta$ for all
  $x \in \mathcal{S}(p)$. 
\end{thm}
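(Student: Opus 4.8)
The plan is to derive both inequalities from the identities of Lemma~\ref{lma:repformIII}, combined with the Cauchy--Schwarz inequality applied in two different guises; no new machinery is needed beyond the representations already at hand.

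For the lower bound \eqref{eq:13} I would start from the Stein integration-by-parts formula \eqref{eq:20}, which with $g = f$ reads
\[
\mathbb{E}\left[c(X)\,\Delta^{-\ell}f(X)\right] = -\mathbb{E}\left[\{\mathcal{T}_p^{\ell}c(X)\}\,f(X)\right].
\]
Since $c \in \mathcal{F}_{\ell}^{(1)}(p)$ forces $\mathbb{E}[\mathcal{T}_p^{\ell}c(X)] = 0$, the right-hand side is exactly $-\mathrm{Cov}[\mathcal{T}_p^{\ell}c(X), f(X)]$. A single application of Cauchy--Schwarz to this covariance, together with $\mathrm{Var}[\mathcal{T}_p^{\ell}c(X)] = \mathbb{E}[(\mathcal{T}_p^{\ell}c(X))^2]$, gives
\[
\mathbb{E}\left[c(X)\,\Delta^{-\ell}f(X)\right]^2 \le \mathbb{E}\left[(\mathcal{T}_p^{\ell}c(X))^2\right]\,\mathrm{Var}[f(X)],
\]
which rearranges into \eqref{eq:13}. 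Equality in Cauchy--Schwarz holds precisely when $f(X)$ and $\mathcal{T}_p^{\ell}c(X)$ are affinely dependent, yielding the stated condition $f = \alpha\,\mathcal{T}_p^{\ell}c + \beta$.

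For the upper bound \eqref{eq:19} I would begin from the bilinear representation \eqref{eq:14} with $h$ replaced by $f$, so that, with $X_1, X_2$ independent copies of $X$,
\[
\mathrm{Var}[f(X)] = \mathbb{E}\left[\Delta^{-\ell}f(X_1)\,\frac{K_p^{\ell}(X_1, X_2)}{p(X_1)p(X_2)}\,\Delta^{-\ell}f(X_2)\right].
\]
Writing $w = -\Delta^{-\ell}h$, which is nonnegative because $h$ is decreasing (in all three cases of $\ell$), I would multiply and divide by $\sqrt{w(X_1)w(X_2)}$ and regroup the symmetric integrand as a product of two factors exchanged under $X_1 \leftrightarrow X_2$, exactly as in the Gaussian computation of Subsection~\ref{sec:gaussian-case}. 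Cauchy--Schwarz over the pair $(X_1, X_2)$, using the symmetry of $K_p^{\ell}$ to identify the two resulting second moments, then gives
\[
\mathrm{Var}[f(X)] \le \mathbb{E}\left[\frac{(\Delta^{-\ell}f(X_1))^2}{w(X_1)}\,\frac{K_p^{\ell}(X_1, X_2)}{p(X_1)p(X_2)}\,w(X_2)\right].
\]

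To finish, I would condition on $X_1$ and recognise the inner integral through representation \eqref{eq:24}: since $K_p^{\ell}$ is symmetric, integrating over $X_2$ gives $\mathbb{E}[\tfrac{K_p^{\ell}(X_1, X_2)}{p(X_1)p(X_2)}\Delta^{-\ell}h(X_2) \mid X_1] = -\mathcal{L}_p^{\ell}h(X_1)$, so the conditional expectation of the weight $\tfrac{K_p^{\ell}(X_1,X_2)}{p(X_1)p(X_2)}\,w(X_2)$ equals $\mathcal{L}_p^{\ell}h(X_1)$. Substituting and using $w = -\Delta^{-\ell}h$ recovers the factor $(-\mathcal{L}_p^{\ell}h)/\Delta^{-\ell}h$ of \eqref{eq:19}, the two sign changes cancelling correctly. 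The equality case of Cauchy--Schwarz forces $\Delta^{-\ell}f$ to be proportional to $\Delta^{-\ell}h$, whence $\Delta^{-\ell}(f - \alpha h) = 0$; the connectedness of $\mathcal{S}(p)$ under Assumption~A then upgrades this to $f = \alpha h + \beta$ on the whole support. The main obstacle I anticipate is bookkeeping rather than conceptual: tracking the sign of $\Delta^{-\ell}h$ across the three values of $\ell$ to legitimise the square roots of $w$, and correctly orienting the two arguments in \eqref{eq:24} so that the conditional expectation collapses to $-\mathcal{L}_p^{\ell}h$ rather than its reflection.
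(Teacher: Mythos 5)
Your proposal is correct and follows essentially the same route as the paper: identity \eqref{eq:20} plus Cauchy--Schwarz for the lower bound, and the bilinear representation \eqref{eq:14} with a symmetric splitting, Cauchy--Schwarz, and \eqref{eq:24} for the upper bound. Your explicit handling of the sign via $w=-\Delta^{-\ell}h$ is in fact slightly more careful than the paper's displayed computation, which writes square roots of $\Delta^{-\ell}h$ directly, and you also spell out the equality cases that the paper states but does not prove.
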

\begin{rmk}
  The requirement that $h$ be decreasing is not necessary; all that is
  needed is in fact that
  $ {-\mathcal{L}_p^{\ell}h(X)}/{\Delta^{-\ell}h(X)}>0$ almost surely.
\end{rmk}

\begin{proof}[Proof of Theorem \ref{thm:klaassenbounds}]
  The lower bound \eqref{eq:13} is an almost immediate consequence of
  \eqref{eq:20}. Indeed, using the fact that
  $\mathcal{T}_p^{\ell} c(X) \in \mathcal{F}^{(0)}(p)$ for all
  $c \in \mathcal{F}^{\ell}(p)$ we have
\begin{align*}
 \mathbb{E} \left[  c(X) \big(\Delta^{-\ell} f(X) \big) \right]^2 & =
 \mathbb{E}
 \left[
 \big(\mathcal{T}_p^\ell
 c(X)
  \big)
 f(X)
\right]^2  
 = \mathbb{E} 
  \left[
  \big(\mathcal{T}_p^\ell
   c(X)
 \big)
\bar{f}(X)
 \right]^2 \\
& \le \mathbb{E} \left[
\big(\mathcal{T}_p^{\ell}c(X)\big)^2 \right] \mathrm{Var}[f(X)]
\end{align*}
by the Cauchy-Schwarz inequality. 
To see \eqref{eq:19} we simply apply \eqref{eq:14} and the
Cauchy-Schwarz inequality to obtain
\begin{align*}
 &  \mathrm{Var}[f(X)]  = \mathbb{E} \left[ \bar{f}(X) f(X) \right]  =
       \mathbb{E} \left[\Delta^{-\ell} f(X)   \frac{K_p^{\ell}(X, X')}{p(X)p(X')} \Delta^{-\ell} f(X')
       \right] \\
  & =  \mathbb{E} \left[ \left\{ \frac{\Delta^{-\ell}
    f(X)}{\sqrt{\Delta^{-\ell} h(X)}}  \sqrt{\frac{K_p^{\ell}(X,
    X')}{p(X)p(X')} \Delta^{-\ell}h(X')}  \right\}   \left\{ \frac{\Delta^{-\ell}
    f(X')}{\sqrt{\Delta^{-\ell} h(X')}}  \sqrt{\frac{K_p^{\ell}(X,
    X')}{p(X)p(X')} \Delta^{-\ell}h(X)} \right\}\right]\\
& \le \mathbb{E} \left[ \frac{(\Delta^{-\ell}
    f(X))^2}{\Delta^{-\ell} h(X)} \frac{K_p^{\ell}(X,
    X')}{p(X)p(X')} \Delta^{-\ell}h(X') \right];
\end{align*}
using \eqref{eq:24} finishes the proof. 
\end{proof}

\begin{exm}[Binomial bounds]\label{ex:binom3}
  Let $X\sim \mathrm{Bin}(n, p)$.  From previous developments we
  obtain the upper and lower bounds 
\begin{align*}
&   \frac{\mathbb{E} \left[ X \Delta^-f(X) \right]^2}{np} \le
  \mathrm{Var}[f(X)] \le (1-p)\mathbb{E} \left[ X(\Delta^-f(X))^2
  \right]; \\
& \frac{\mathbb{E} \left[ (X-n) \Delta^+f(X) \right]^2}{n(1-p)} \le
  \mathrm{Var}[f(X)] 
 \le \mathbb{E} \left[ p(n-X)(\Delta^+f(X))^2
  \right].  
\end{align*}
\end{exm}
\begin{exm}[Poisson bounds]\label{ex:poiss3}
Let  $X\sim \mathrm{Pois}(\lambda)$. From previous developments we
  obtain the upper and lower bounds 
\begin{align*}
 &  \frac{\mathbb{E} \left[ X \Delta^-f(X) \right]^2}{\lambda} \le
  \mathrm{Var}[f(X)] \le \mathbb{E} \left[ X(\Delta^-f(X))^2
  \right]; \\
 &  \mathbb{E} \left[\Delta^+f(X) \right]^2 \le
\mathrm{Var}[f(X)] \le \lambda \mathbb{E} \left[  (\Delta^+f(X))^2
  \right]. 
\end{align*}
\end{exm}

\begin{exm}[Beta bounds]\label{ex:beta3}
  Let $X\sim \mathrm{Beta}(\alpha, \beta)$. From previous developments
  we obtain the upper and lower bounds
\begin{align*}
 &  \frac{(\alpha+\beta)(\alpha+\beta+1)}{\alpha\beta} \mathbb{E} \left[ X(1-X) f'(X) \right]^2 \le
  \mathrm{Var}[f(X)] \le \frac{1}{\alpha+\beta} \mathbb{E} \left[ X(1-X)(f'(X))^2
  \right]  
\end{align*}
\end{exm}

\section{A Lagrange formula and Papathanassiou-type variance
  expansions}
\label{sec:variance-expansion}

This section begins  with a third representation for the Stein operator
$\mathcal{L}_p^{\ell}$; surprisingly, this result seems to be new.
 \begin{lma}[Representation formula III]\label{lem:represnt}
   Let $X, X_1, X_2$ be independent copies of $X \sim p$ with support
   $\mathcal{S}(p)$. We define
\begin{align}
  \label{eq:29}
  \Phi^{\ell}_p(u, x, v) = \chi^{\ell}(u, x)\chi^{-\ell}(x, v)/p(x)
\end{align}
for all $u, v \in \mathcal{S}(p)$ (note that $ \Phi^{\ell}_p(u, x, v)=0$
for $u \ge v$).  Then, for all
   $h \in L^1(p)$ we have
\begin{equation}
  \label{eq:reprsntform}
  -\mathcal{L}_p^{\ell}h(x)  =  \mathbb{E} \left[
       (h(X_2)-h(X_1)) 
\Phi^{\ell}_p(X_1, x,  X_2) 
     \right]. 
\end{equation}
 \end{lma}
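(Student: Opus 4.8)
The plan is to expand the right-hand side of \eqref{eq:reprsntform} by independence of $X_1,X_2$ and reduce it to the expression for $\mathcal{L}_p^{\ell}h$ supplied by Representation formula I (Lemma \ref{lem:can_inverse}). Writing $\bar h = h - \mathbb{E}[h(X)]$, Definition \ref{def:can_inverse} together with \eqref{eq:17} gives
\begin{equation*}
  -\mathcal{L}_p^{\ell}h(x) = -\frac{1}{p(x)}\,\mathbb{E}\big[\chi^{\ell}(X, x)\,\bar h(X)\big],
\end{equation*}
so the goal reduces to showing that the right-hand side of \eqref{eq:reprsntform}, multiplied by $p(x)$, equals $-\mathbb{E}[\chi^{\ell}(X, x)\bar h(X)]$.

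The one nontrivial ingredient I will need is the complementarity identity $\chi^{-\ell}(x, X) = 1 - \chi^{\ell}(X, x)$, holding $p$-almost surely in each admissible case. I would verify it by inspecting $\ell \in \{-1,0,1\}$ separately using $\chi^-(u,v)=\mathbb{I}_{u\le v}$, $\chi^+(u,v)=\mathbb{I}_{u<v}$ and $\chi^0(u,v)=\mathbb{I}_{u\le v}$: for $\ell=\pm1$ the events $\{X\le x-\ell(\ell+1)/2\}$ and $\{X\ge x+\ell(\ell-1)/2\}$ partition $\Z$ exactly, while for $\ell=0$ the two indicators overlap only on the diagonal $\{X=x\}$, which is Lebesgue-null. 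This is precisely where the offsets $\ell(\ell\pm1)/2$ built into \eqref{eq:kerneldeff} do their work, and it is the step I expect to require the most care (boundary bookkeeping in the discrete case, a null-set argument in the continuous case).

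Granting this, set $A=\mathbb{E}[\chi^{\ell}(X,x)]$ and $C=\mathbb{E}[h(X)\chi^{\ell}(X,x)]$, both finite since $h\in L^1(p)$ and $\chi^{\ell}$ is bounded; the complementarity identity yields $\mathbb{E}[\chi^{-\ell}(x,X)]=1-A$ and $\mathbb{E}[h(X)\chi^{-\ell}(x,X)]=\mathbb{E}[h(X)]-C$. Expanding by independence and substituting,
\begin{align*}
  p(x)\,\mathbb{E}\big[(h(X_2)-h(X_1))\,\Phi^{\ell}_p(X_1,x,X_2)\big]
  &= A\big(\mathbb{E}[h(X)]-C\big) - C(1-A)\\
  &= A\,\mathbb{E}[h(X)] - C = -\mathbb{E}\big[\chi^{\ell}(X,x)\,\bar h(X)\big],
\end{align*}
where the cross term $AC$ cancels. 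Dividing by $p(x)>0$ on $\mathcal{S}(p)$ gives \eqref{eq:reprsntform}.

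In short, once the complementarity identity is secured the argument is a one-line algebraic collapse; the ``telescoping'' appearance of $h(X_2)-h(X_1)$ contributes nothing deeper than the two surviving terms $A\,\mathbb{E}[h(X)]$ and $-C$ after the $AC$ cancellation. I would also note in passing that the factorization $\Phi^{\ell}_p(u,x,v)=\chi^{\ell}(u,x)\chi^{-\ell}(x,v)/p(x)$ is exactly what makes the independence step clean, since it separates the $X_1$- and $X_2$-dependence of each summand.
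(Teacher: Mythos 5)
Your proposal is correct and follows essentially the same route as the paper: expand the expectation using the independence of $X_1,X_2$ and the product structure of $\Phi^{\ell}_p$, then collapse the result via the complementarity identity $\chi^{-\ell}(x,X)=1-\chi^{\ell}(X,x)$ ($p$-a.s., exact for $\ell=\pm1$ and up to the null diagonal for $\ell=0$) and match against representation \eqref{eq:17}. The only cosmetic differences are that the paper reduces to the case $\mathbb{E}[h(X)]=0$ first and factors the result as $C\cdot(A+(1-A))$, whereas you carry $\mathbb{E}[h(X)]$ through and cancel the cross term $AC$; the content is identical.
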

\begin{proof}
First notice that, under the assumptions,
  $ \mathbb{E} [\chi^\ell(X, x) + \chi^{-\ell}(x, X) ] = 1$ for all
  $x\in \mathcal{X}$. Suppose without loss of generality that
  $\mathbb{E}[h(X)]=0$. Using that $X_1, X_2$ are i.i.d., we
reap
\begin{align*}
& \mathbb{E} \big[
       (h(X_1)-h(X_2)) 
       \chi^{\ell}(X_1, x)  \chi^{-\ell}(x, X_2)
     \big]  \\
& = \mathbb{E} \big[
       h(X_1)
       \chi^{\ell}(X_1, x)
     \big]  \mathbb{E}[\chi^{-\ell}(x, X_2)]-
               \mathbb{E} \big[
       h(X_2) 
       \chi^{-\ell}(x, X_2)
     \big]  \mathbb{E}[\chi^{\ell}(X_1, x)]  \\
&   = \mathbb{E} \big[
       h(X_1)
       \chi^{\ell}(X_1, x)
     \big]  \mathbb{E}[\chi^{-\ell}(x, X_2)]-
               \mathbb{E} \big[
       h(X_2) 
       \big(1- \chi^{\ell}(X_2, x)\big)
     \big]  \mathbb{E}[\chi^{\ell}(X_1, x)] \\
  & = \mathbb{E} \big[
       h(X)
       \chi^{\ell}(X, x)
     \big]  \big(\mathbb{E}[\chi^{-\ell}(x, X)] +
    \mathbb{E}[\chi^{\ell}(X, x)]\big).  
\end{align*}
The conclusion follows after recalling \eqref{eq:17}.
\end{proof}

Combining \eqref{eq:25} and \eqref{eq:reprsntform} we easily obtain
the following identities which, together, will ultimately lead to our
infinite expansions for variances.

\begin{lma}\label{lma:var1}
  Let $X \sim p$ with support $\mathcal{S}(p)$.  For all
  $x_1, x_2 \in \mathcal{S}(p)$, we have
\begin{equation}
  \label{eq:28}
g(x_2)-g(x_1) =   \mathbb{E}\left[   \Phi^{\ell}_p(x_1, X, x_2) \Delta^{-\ell}g(X)\right].
\end{equation}
If  $X_1, X_2$ are independent  copies of $X$ then 
\begin{equation} \label{eq:27}
\mathrm{Var}[g(X)] = \E\left[ \left(g(X_2)-g(X_1) \right)^2 \mathbb{I}_{[X_1<X_2]} \right]
\end{equation}
for all  $g \in L^2(p)$.
\end{lma}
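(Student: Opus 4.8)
The plan is to establish \eqref{eq:28} first, as a purely deterministic telescoping identity, and then feed it into the combination of \eqref{eq:25} and \eqref{eq:reprsntform} to obtain \eqref{eq:27}. Both steps are short; the only care required is in tracking how the ordering of $x_1,x_2$ enters.

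First I would prove \eqref{eq:28} by unfolding the definition \eqref{eq:29}. Writing the expectation as an integral against $p$, the factor $1/p(X)$ in $\Phi^{\ell}_p$ cancels the density, so that
\begin{equation*}
\mathbb{E}\left[\Phi^{\ell}_p(x_1, X, x_2)\,\Delta^{-\ell}g(X)\right] = \int_{\mathcal{S}(p)} \chi^{\ell}(x_1, y)\,\chi^{-\ell}(y, x_2)\,\Delta^{-\ell}g(y)\,d\mu(y).
\end{equation*}
The product of indicators restricts the range of integration/summation to the ``interval'' between $x_1$ and $x_2$, and on that range $\Delta^{-\ell}g$ is an exact increment of $g$; hence the right-hand side collapses by the fundamental theorem of calculus ($\ell=0$) or by telescoping ($\ell=\pm1$) to $g(x_2)-g(x_1)$ whenever $x_1<x_2$. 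I would simply check the three cases using the explicit forms $\chi^0(x,y)=\chi^-(x,y)=\mathbb{I}_{x\le y}$ and $\chi^+(x,y)=\mathbb{I}_{x<y}$ together with $\Delta^{-\ell}g$; for instance when $\ell=1$ the sum is $\sum_{y=x_1+1}^{x_2}(g(y)-g(y-1))$. When $x_1=x_2$ both sides vanish, and when $x_1>x_2$ the right-hand side vanishes because $\Phi^{\ell}_p(x_1,\cdot,x_2)\equiv0$ (as recorded right after \eqref{eq:29}); thus the identity is to be read with the understanding that its right-hand side carries an implicit factor $\mathbb{I}_{x_1<x_2}$.

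Next I would derive \eqref{eq:27}. Taking $h=g$ in \eqref{eq:25} gives $\mathrm{Var}[g(X)]=\mathbb{E}[-\mathcal{L}_p^{\ell}g(X)\,\Delta^{-\ell}g(X)]$, and into the factor $-\mathcal{L}_p^{\ell}g(X)$ I would substitute its representation \eqref{eq:reprsntform}, with $X_1,X_2$ independent copies of $X$ that are also independent of the outer variable $X$. After a Fubini exchange this yields
\begin{equation*}
\mathrm{Var}[g(X)] = \mathbb{E}_{X_1,X_2}\left[(g(X_2)-g(X_1))\,\mathbb{E}_{X}\!\left[\Phi^{\ell}_p(X_1, X, X_2)\,\Delta^{-\ell}g(X)\right]\right].
\end{equation*}
The inner conditional expectation is precisely the left-hand side of \eqref{eq:28} evaluated at $x_1=X_1$, $x_2=X_2$, so by the first part it equals $(g(X_2)-g(X_1))\,\mathbb{I}_{X_1<X_2}$. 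Substituting this back produces $(g(X_2)-g(X_1))^2\,\mathbb{I}_{X_1<X_2}$ inside the expectation, which is exactly \eqref{eq:27}.

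The routine point to verify is integrability, so that Fubini applies: since $g\in L^2(p)$ one has $\mathbb{E}[(g(X_2)-g(X_1))^2]=2\,\mathrm{Var}[g(X)]<\infty$, which dominates every term in sight and legitimises the interchange. The one genuinely delicate step is the ordering bookkeeping in the first part: it is the vanishing of $\Phi^{\ell}_p(x_1,\cdot,x_2)$ for $x_1\ge x_2$ that converts the symmetric-looking object $(g(X_2)-g(X_1))^2$ into the half-space form carrying $\mathbb{I}_{X_1<X_2}$. As an independent sanity check I would note that \eqref{eq:27} also follows from the classical identity $\mathrm{Var}[g(X)]=\tfrac12\,\mathbb{E}[(g(X_2)-g(X_1))^2]$ by splitting on $\{X_1<X_2\}$, $\{X_1>X_2\}$ and $\{X_1=X_2\}$ and invoking the symmetry $X_1\leftrightarrow X_2$; agreement of the two routes confirms that the correct factor is $\mathbb{I}_{X_1<X_2}$ rather than $\tfrac12$.
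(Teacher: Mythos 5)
Your proposal is correct and follows essentially the same route as the paper: \eqref{eq:28} is obtained by the same direct cancellation of $1/p$ and telescoping/fundamental-theorem argument over the window $\chi^{\ell}(x_1,\cdot)\chi^{-\ell}(\cdot,x_2)$, and \eqref{eq:27} by the same chaining of \eqref{eq:25} with \eqref{eq:reprsntform} and a conditioning on $(X_1,X_2)$. Your closing observation that \eqref{eq:27} also follows from the elementary symmetrization identity $\mathrm{Var}[g(X)]=\tfrac12\,\mathbb{E}[(g(X_2)-g(X_1))^2]$ is a valid (and simpler) alternative for that part, and is in the spirit of the remark on Lagrange's identity that the paper places immediately after the lemma.
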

\begin{proof}
Identity \eqref{eq:28} follows from the (trivial) observation that 
\begin{align*}
\mathbb{E}\left[  \frac{\chi^{\ell}(x_1,
    X)\chi^{-\ell}(X, x_2)}{p(X)} \Delta^{-\ell}g(X)\right] & =
                                                        \int_{\mathcal{S}(p)}\chi^{\ell}(x_1,
    x)\chi^{-\ell}(x, x_2)  \Delta^{-\ell}g({\red{x}})  d\mu(x) 
\end{align*}
which is equal to $g(x_2)-g(x_1)$ for all
$x_1 < x_2 \in \mathcal{S}(p)$ and all
$\ell \in \left\{ -1, 0, 1 \right\}$. Using \eqref{eq:25} followed by 
\eqref{eq:reprsntform}, we get
\begin{align*}
  \mathrm{Var}[g(X)] & = \mathbb{E}[-\mathcal{L}_p^{\ell}g(X)
                       \Delta^{-\ell}g(X)]  \\
& = \mathbb{E} \left[
                       \mathbb{E}\big[ \big( g(X_2) - g(X_1)\big)
                      \Phi_p^{\ell}(X_1, X, X_2)\, | \, X \big]
  \Delta^{-\ell}g(X)   \right] \\
                     & = \mathbb{E} \left[ \big( g(X_2) - g(X_1)\big)
                       \Phi_p^{\ell}(X_1, X, X_2)
                       \Delta^{-\ell}g(X)   \right] \\
  & = \mathbb{E} \left[ \big( g(X_2) - g(X_1)\big) \mathbb{E} \left[  \Phi_p^{\ell}(X_1, X, X_2)
    \Delta^{-\ell}g(X) \, | \, X_1, X_2\right] \mathbb{I}_{[X_1<X_2]}
  \right] \\
  & = \mathbb{E} \left[ \big( g(X_2) - g(X_1)\big)^2
    \mathbb{I}_{[X_1<X_2]} \right],
\end{align*}
whence the claim. 
\end{proof}

\begin{rmk} 
In fact, treating the discrete and continuous cases separately, one
could also obtain identity \eqref{eq:27} as a direct application of
Lagrange's identity (a.k.a.\ the Cauchy-Schwarz inequality with remainder) which
reads, in the {{finite}} discrete case, as
\begin{equation}
  \left(\sum_{k=u}^v a_k^2 \right)\left(\sum_{k=u}^v b_k^2 \right) -
  \left(\sum_{k=u}^v a_k b_k \right)^2 =  \sum_{i=u}^{v-1}\sum_{j=i+1}^v (a_ib_j-a_jb_i)^2
\label{lagrange}
\end{equation}
with $a_k=g(k)\sqrt{p(k)}$ and $b_k=\sqrt{p(k)}$ for
$k=0,\ldots,n$. Identity \eqref{lagrange} and its continuous
counterpart will play a crucial role in the sequel; it will be more
suited to our cause under the following form.
\end{rmk}
\begin{lma}[A probabilistic Lagrange identity]
\label{lma:prob}
  Let $X, X_1, X_2$ be independent random variables with distribution
  $p$ and $a, b$ any two functions such that the expectations below
  exist.  Then
 \begin{align}
   \label{eq:lagrangeidentityp}
   \mathbb{E} \left[ a(X) b(X) \Phi^{\ell}_p(u, X, v) \right]^2  & =
                                                                \mathbb{E} \left[ (a(X))^2 \Phi^{\ell}_p(u, X, v) \right]
                                                                \mathbb{E} \left[ (b(X))^2 \Phi^{\ell}_p(u, X, v) \right] \nonumber \\
                                                              & \quad
                                                                -
                                                                \mathbb{E}
                                                                \left[
                                                                (a(X_1)b(X_2)
                                                                -
                                                                a(X_2)b(X_1))^2 
                                                                \Phi^\ell_p(u, X_1, X_2, v) \right]
 \end{align}
where 
\begin{align}
  \label{eq:51}
  \Phi^\ell_p(u, x_1, x_2, v) = \frac{\chi^{\ell}(u,
  x_1)\chi^{\ell^2}(x_1, x_2)\chi^{-\ell}(x_2, v)}{p(x_1)p(x_2)}. 
\end{align}
\end{lma}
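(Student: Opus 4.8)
The plan is to strip away the probabilistic dressing and recognise the statement as the classical Lagrange identity \eqref{lagrange} (and its continuous analogue) over the region cut out by the indicators. The crucial observation is that multiplying by $\Phi^\ell_p$ and taking expectations cancels the densities: since $X\sim p$,
\[
\mathbb{E}\big[ F(X)\, \Phi^\ell_p(u,X,v)\big] = \int F(x)\, \frac{\chi^\ell(u,x)\chi^{-\ell}(x,v)}{p(x)}\, p(x)\, d\mu(x) = \int F(x)\, \chi^\ell(u,x)\chi^{-\ell}(x,v)\, d\mu(x),
\]
and, using independence of $X_1,X_2$ together with \eqref{eq:51},
\[
\mathbb{E}\big[ G(X_1,X_2)\, \Phi^\ell_p(u,X_1,X_2,v)\big] = \iint G(x_1,x_2)\, \chi^\ell(u,x_1)\chi^{\ell^2}(x_1,x_2)\chi^{-\ell}(x_2,v)\, d\mu(x_1)\, d\mu(x_2).
\]
Writing $d_x = \chi^\ell(u,x)\chi^{-\ell}(x,v)$, the three single-variable expectations become $\int a^2\, d_x\, d\mu$, $\int b^2\, d_x\, d\mu$ and $\int ab\, d_x\, d\mu$.

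The first real step is to check that the triple product appearing in $\Phi^\ell_p(u,x_1,x_2,v)$ factorises as $\chi^\ell(u,x_1)\chi^{\ell^2}(x_1,x_2)\chi^{-\ell}(x_2,v) = d_{x_1} d_{x_2}\, \mathbb{I}_{[x_1<x_2]}$ in the discrete cases $\ell=\pm1$ and $d_{x_1}d_{x_2}\,\mathbb{I}_{[x_1\le x_2]}$ when $\ell=0$. Indeed, by \eqref{eq:kerneldeff} the middle factor $\chi^{\ell^2}$ enforces $x_1<x_2$ whenever $\ell\in\{-1,1\}$ (since $\ell^2=1$) and $x_1\le x_2$ when $\ell=0$; one then verifies in each of the three cases that $\{x:d_x=1\}$ is the interval between $u$ and $v$ (with endpoints included or excluded according to $\ell$), so that the triple product is precisely the restriction of $d_{x_1}d_{x_2}$ to the ordered half $\{x_1<x_2\}$ (or $\{x_1\le x_2\}$). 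Consequently the remainder term equals $\iint_{x_1<x_2}(a(x_1)b(x_2)-a(x_2)b(x_1))^2\, d_{x_1}d_{x_2}\, d\mu(x_1)\, d\mu(x_2)$.

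It then remains to prove the purely deterministic identity
\[
\Big(\int ab\, d_x\, d\mu\Big)^2 = \Big(\int a^2\, d_x\, d\mu\Big)\Big(\int b^2\, d_x\, d\mu\Big) - \iint_{x_1<x_2}\big(a(x_1)b(x_2)-a(x_2)b(x_1)\big)^2 d_{x_1}d_{x_2}\, d\mu(x_1)\,d\mu(x_2).
\]
Expanding $(a(x_1)b(x_2)-a(x_2)b(x_1))^2 = a(x_1)^2b(x_2)^2 + a(x_2)^2b(x_1)^2 - 2a(x_1)b(x_1)a(x_2)b(x_2)$ and integrating over the full square $\{x_1,x_2: d_{x_1}=d_{x_2}=1\}$ gives $2(\int a^2 d)(\int b^2 d) - 2(\int ab\, d)^2$; since the integrand is symmetric in $(x_1,x_2)$ and vanishes on the diagonal, the integral over $\{x_1<x_2\}$ is exactly one half of this, which rearranges to the displayed identity. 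Gathering the three steps yields \eqref{eq:lagrangeidentityp}.

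I expect the only delicate point to be the bookkeeping in the second step: confirming that the three indicator conventions encoded in \eqref{eq:kerneldeff} combine, simultaneously for $\ell=-1,0,1$, to carve out exactly the ordered region, and that the diagonal either has $\mu^{\otimes 2}$-measure zero (continuous case) or contributes a vanishing summand (discrete case, where $(a(k)b(k)-a(k)b(k))^2=0$). Everything else is the elementary algebra behind Lagrange's identity together with the density cancellation, so once the indicator arithmetic is pinned down the result is immediate.
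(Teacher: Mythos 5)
Your proof is correct and is essentially the paper's own argument in a different dress: the paper likewise expands the remainder term, uses the factorisation $\Phi^\ell_p(u,x_1,x_2,v)=\Phi^\ell_p(u,x_1,v)\Phi^\ell_p(u,x_2,v)\chi^{\ell^2}(x_1,x_2)$, symmetrises to replace the ordered region by half the full square (the diagonal being harmless since the squared difference vanishes there), and finishes by independence of $X_1,X_2$ — exactly your expansion-and-halving step, with your up-front density cancellation playing the role of their final appeal to independence. The indicator bookkeeping you flag as the delicate point checks out in all three cases $\ell\in\{-1,0,1\}$, so nothing is missing.
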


\begin{proof}[Proof of Lemma \ref{lma:prob}]
The proof follows from expanding the second term in \eqref{eq:lagrangeidentityp}, 
\begin{eqnarray*}
\lefteqn{ \mathbb{E}    \left[
                                                                (a(X_1)b(X_2)
                                                                -
                                                                a(X_2)b(X_1))^2 
                                                                \Phi^\ell_p(u, X_1, X_2, v) \right]}\\
 &=& \mathbb{E}    \left[ ( a(X_1)^2 b(X_2)^2 + a(X_2)^2 b(X_1)^2 ) 
  \Phi^\ell_p(u, X_1, v) \Phi^\ell_p(u,  X_2, v)\Delta^{\ell^2}(X_1, X_2) \right] 
  \\
  &&- 2  \mathbb{E}    \left[ ( a(X_1) b(X_2) a(X_2) b(X_1) ) 
  \Phi^\ell_p(u, X_1, v) \Phi^\ell_p(u,  X_2, v)\Delta^{\ell^2}(X_1, X_2) \right]                                                             .
\end{eqnarray*}
By symmetry, 
\begin{eqnarray*}
\lefteqn{ \mathbb{E}    \left[
                                                                (a(X_1)b(X_2)
                                                                -
                                                                a(X_2)b(X_1))^2 
                                                                \Phi^\ell_p(u, X_1, X_2, v) \right]}\\
 &=&  \mathbb{E}    \left[
                                                                (a(X_1)b(X_2)
                                                                -
                                                                a(X_2)b(X_1))^2 
                                                                \Phi^\ell_p(u, X_2, X_1, v) \right].
 \end{eqnarray*}
Now $\Delta^{\ell^2}(X_1, X_2) + \Delta^{\ell^2}(X_2, X_1) = 1$ when $X_1 = X_2$, and vanishes otherwise, and for $X_1 = X_2$, it holds that $(a(X_1)b(X_2)
                                                                -
                                                                a(X_2)b(X_1))^2 =0$. Hence
                                                                \begin{eqnarray*}
\lefteqn{ \mathbb{E}    \left[
                                                                (a(X_1)b(X_2)
                                                                -
                                                                a(X_2)b(X_1))^2 
                                                                \Phi^\ell_p(u, X_1, X_2, v) \right]}\\
                                                                &=& \frac12 
                                                                \mathbb{E}    \left[
                                                                (a(X_1)b(X_2)
                                                                -
                                                                a(X_2)b(X_1))^2 
                                                                \Phi^\ell_p(u, X_1, v)  \Phi^\ell_p(u, X_2, v)\right].
            \end{eqnarray*}
            Exploiting the independence of $X_1$ and $X_2$ now yields
            the conclusion.                                                       
\end{proof}

\begin{rmk}
  For ease of reference, we detail \eqref{eq:51}:
\begin{align} \label{eq:30} 
& \Phi^0_p(u, x_1, x_2, v) =  \Phi^{0}_p(u,
  x_1, x_2)  \frac{\chi^0(x_2,  v)}{p(x_2)} = \frac{\chi^{0}(u,
  x_1)\chi^{0}(x_1, x_2)\chi^0(x_2, v)}{p(x_1)p(x_2)} \\
& \Phi^-_p(u, x_1, x_2, v) =  \Phi^{-}_p(u,
  x_1, x_2)  \frac{\chi^+(x_2,  v)}{p(x_2)} = \frac{\chi^{-}(u,
  x_1)\chi^{+}(x_1, x_2)\chi^+(x_2, v)}{p(x_1)p(x_2)}  \label{eq:39} \\
& \Phi^+_p(u, x_1, x_2, v) =   \frac{\chi^+(u, x_1)}{p(x_1)} \Phi^+_p(x_1,
  x_2, v)  = \frac{\chi^{+}(u,
  x_1)\chi^{+}(x_1, x_2)\chi^-(x_2, v)}{p(x_1)p(x_2)} \label{eq:42} 
\end{align}
\end{rmk}



\begin{thm}\label{thm:var-}
  Fix $\ell \in \left\{ -1, 0, 1 \right\}$ and define the sequence
  $\pmb{\ell}=(\ell_n)_{n\ge1}$ such that $\ell_n=0$ for all $n$ if $\ell =0$,
  otherwise $\ell_n\in \left\{ -1, 1 \right\}$ arbitrarily
  chosen. Consider a sequence $(h_n)_{n\ge1}$ of real valued functions
  $h_i: \R \to \R$ such that
  $\mathbb{P}[\Delta^{\ell_i} h_{2i-1}(X)>0] = 1$ for all $i\ge1$.
  Starting with some function $g$, we also recursively define the
  sequence $(g_k)_{0\leq k\leq n}$ by $g_0(x)=g(x)$ and
  $g_{i}(x)={\Delta^{\ell_i}g_i(x)}/{\Delta^{\ell_i}h_{i}(x)}$ for all
  $x \in \mathcal{S}(p)$. Finally, starting from
  $\Phi_{1,1}^{\pmb{\ell}}(x_1, x, x_2) = \Phi^{\ell_1}_p(x_1, x, x_2)$ and
  $\Phi_{1,2}^{\pmb{\ell}}(x_1,x_3, x_4, x_2) = \Phi^{\ell_1}_p(x_1, x_3, x_4,
  x_2)$ we define recursively for $n \ge 2$
\begin{align}
 &  \Phi_{n,1}^{\pmb{\ell}}(x_1, x_3, \ldots, x_{2n-1}, x, x_{2n}, \ldots,
   x_2)\nonumber\\
&  \qquad =
   \Phi^{\ell_{n}}_p(x_{2n-1}, x, x_{2n})
    \Phi_{n-1,2}^{\pmb{\ell}}(x_1, x_3, \ldots, x_{2n-1}, x_{2n}, \ldots, x_2)\label{eq:50}\\ 
 &      \Phi_{n,2}^{\pmb{\ell}}(x_1, x_3, \ldots, x_{2n-1}, x_{2n+1},
   x_{2n+2}, x_{2n}, \ldots, x_2) \nonumber \\
&  \qquad =
   \Phi^{\ell_{n}}_p(x_{2n-1}, x_{2n+1},
   x_{2n+2}, x_{2n})
    \Phi_{n-1,2}^{\pmb{\ell}}(x_1, x_3, \ldots, x_{2n-1}, x_{2n}, \ldots,
   x_2)  \label{eq:38}
\end{align}
at any sequence $(x_j)_{j\ge1}$.  Then, for all $g\in L^2(p)$ and all
$n \ge 1$ we have
\begin{align}\label{eq:varianceexpansion}
\mathrm{Var}[g(X)]=  
& \sum_{k=1}^n (-1)^{k-1} \E\left[ 
\frac{\left(\Delta^{-\ell_k} g_{k-1}(X)\right)^2}{\Delta^{-\ell_{k}}
                        h_k(X)} \Gamma_k^{\pmb \ell}(X) 
\right]		 + (-1)^n R_n^{\pmb\ell}
\end{align}
where 
\begin{align}
  &  \Gamma_k^{\pmb \ell}(x)  = \E\Bigg[ 
 (h_k(X_{2k})-h_k(X_{2k-1}))  
    \prod_{i=1}^{k-1} 
\Delta^{-\ell_i} h_i(X_{2i+1}, X_{2i+2})   
    \Phi_{k, 1}^{\pmb{\ell}}(X_1, \ldots,  X_{2k-1}, x, X_{2k}, \ldots, X_2)
    \Bigg]\label{eq:2}
\end{align}
 and 
\begin{align}
  R_n^{\pmb \ell}& =  \E \Bigg[
 \left(g_n(X_{2n+2})- g_n(X_{2n+1})\right)^2  \prod_{i=1}^n   
\Delta^{-\ell_{i}} h_i(X_{2i-1}, X_{2i})        
\Phi_{n, 2}^{\pmb{\ell}}(X_1, \ldots X_{2n+1},  X_{2n+2}, \ldots,  X_2)
         \Bigg]
\end{align}
(we introduce the notation $\Delta^{\ell}h_k(x, y) = \Delta^{\ell}h_k(x)
\Delta^{\ell}h_k(y)$ and an empty product is set to 1).
\end{thm}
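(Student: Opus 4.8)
The natural approach is induction on $n$, with the single engine being the probabilistic Lagrange identity of Lemma~\ref{lma:prob}; each application peels off one summand of \eqref{eq:varianceexpansion} and leaves a remainder of exactly the same shape, so that the whole expression telescopes. The key structural observation, which must be carried as part of the induction hypothesis, is that the remainder $R_n^{\pmb\ell}$ is itself a \emph{weighted variance of $g_n$}: it has the form $\E[(g_n(X_{2n+2})-g_n(X_{2n+1}))^2\,W_n]$ for a nonnegative weight $W_n$ assembled from the nested $\Phi$-kernels and the $\Delta^{-\ell_i}h_i$ factors, which is precisely the type of object to which the one-step procedure can be reapplied.

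For the base case $n=1$ I would start from the variance representation $\mathrm{Var}[g(X)] = \E[-\mathcal{L}_p^{\ell_1}g(X)\,\Delta^{-\ell_1}g(X)]$ of \eqref{eq:25} (equivalently from \eqref{eq:27}) and use \eqref{eq:28} to write the increment $g(X_2)-g(X_1)$ as $\E[\Phi_p^{\ell_1}(X_1, X, X_2)\,\Delta^{-\ell_1}g(X)\mid X_1,X_2]$. After substituting the defining relation $\Delta^{-\ell_1}g(X) = g_1(X)\,\Delta^{-\ell_1}h_1(X)$, I would apply Lemma~\ref{lma:prob} conditionally on $(X_1,X_2)$ with $u=X_1$, $v=X_2$, weight $\Phi_p^{\ell_1}(X_1,\cdot,X_2)$, and the factorisation $a=g_1\sqrt{\Delta^{-\ell_1}h_1}$, $b=\sqrt{\Delta^{-\ell_1}h_1}$. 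The two terms on the right-hand side of \eqref{eq:lagrangeidentityp} then become, respectively, the $k=1$ summand and the remainder: the ``product'' term contains the factor $\E[\Delta^{-\ell_1}h_1(X)\,\Phi_p^{\ell_1}(X_1,X,X_2)\mid X_1,X_2]$, which by \eqref{eq:28} applied to $h_1$ equals $h_1(X_2)-h_1(X_1)$, so that after re-integrating over $(X_1,X_2)$ one recognises $-\mathcal{L}_p^{\ell_1}h_1$ through \eqref{eq:reprsntform} and hence $\Gamma_1^{\pmb\ell}$; the Lagrange remainder is $R_1^{\pmb\ell}$, with the new active pair supplied by the two independent copies in \eqref{eq:lagrangeidentityp} and the $\tfrac12$-symmetrisation absorbed into the ordering encoded by $\Phi_{1,2}^{\pmb\ell}$.

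For the inductive step I would feed the same mechanism to the innermost increment $g_n(X_{2n+2})-g_n(X_{2n+1})$ appearing in $R_n^{\pmb\ell}$: expand it by \eqref{eq:28} with parameter $\ell_{n+1}$ (introducing one fresh middle variable), substitute $\Delta^{-\ell_{n+1}}g_n = g_{n+1}\,\Delta^{-\ell_{n+1}}h_{n+1}$, and invoke Lemma~\ref{lma:prob} conditionally on all the outer variables. The product term, after identifying one Lagrange factor with $h_{n+1}(X_{2n+2})-h_{n+1}(X_{2n+1})$ exactly as above, reproduces the $(n+1)$-st summand with coefficient $\Gamma_{n+1}^{\pmb\ell}$ as defined in \eqref{eq:2}, while the remainder term is $R_{n+1}^{\pmb\ell}$; the nesting relations \eqref{eq:50}--\eqref{eq:38} are precisely the record of how the weight $\Phi_{n,2}^{\pmb\ell}$ spawns $\Phi_{n+1,1}^{\pmb\ell}$ (in the summand) and $\Phi_{n+1,2}^{\pmb\ell}$ (in the new remainder). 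Tracking signs, one step sends $R_n^{\pmb\ell}$ to $(\text{summand})-R_{n+1}^{\pmb\ell}$, so the factors $(-1)^{k-1}$ and $(-1)^n$ in \eqref{eq:varianceexpansion} telescope as required.

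The main obstacle is purely combinatorial: one must verify that a single application of ``expand via \eqref{eq:28}, then Lagrange'' transforms the nested weight of $R_n^{\pmb\ell}$ into exactly the product of $\Phi$-kernels and $\Delta^{-\ell_i}h_i$ factors prescribed by \eqref{eq:2} and the recursions \eqref{eq:50}--\eqref{eq:38}, while keeping careful track of which variables are the current ``extremes'' and which are the freshly created middle points, of the conditioning and Fubini interchanges (legitimate by the nonnegativity and boundedness of the kernels established in Lemma~\ref{lma:repformIII}), and of the ordering indicators produced by the $\tfrac12$-symmetrisation in \eqref{eq:lagrangeidentityp}. Once this one-step identity is pinned down, the induction itself is immediate.
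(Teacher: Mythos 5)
Your proposal is correct and follows essentially the same route as the paper: start from the variance representation \eqref{eq:27} combined with the increment formula \eqref{eq:28}, apply the probabilistic Lagrange identity of Lemma~\ref{lma:prob} conditionally on the current extreme variables with the factorisation $a=g_1\sqrt{\Delta^{-\ell_1}h_1}$, $b=\sqrt{\Delta^{-\ell_1}h_1}$, identify the product term with the $\Gamma_k$-summand via \eqref{eq:28} applied to $h_k$ and the remainder with $R_k^{\pmb\ell}$, then iterate on the remainder (the paper carries out $n=1$ and $n=2$ explicitly and likewise defers the general step to ``induction and careful bookkeeping of the indices''). Your structural observation that $R_n^{\pmb\ell}$ is itself a weighted squared increment of $g_n$, ready for the next application of the one-step mechanism, is exactly the engine of the paper's induction.
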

 
\begin{proof}
  Throughout this proof we write
  $ \mathbb{E}_{X_{i_1}, \ldots, X_{i_k}} \left[ \Psi(X_1, \ldots,
    X_n) \right]$ to denote the expectation of multivariable
  function $\Psi(X_1, \ldots, X_n)$ taken only with respect to
  $X_{i_1}, \ldots, X_{i_k}$ (that is, conditionally on all
  non-mentioned variables).  Starting from \eqref{eq:27} and using
  \eqref{eq:28}:
  \begin{align}
    \mathrm{Var}[g(X)] & = \mathbb{E}\left[ \left(g(X_2)-g(X_1) \right)^2
                         \mathbb{I}_{[X_1<X_2]} \right] \nonumber \\
& =  \mathbb{E}\left[ \left( \mathbb{E}_X\left[   \Phi^{\ell_1}_p(X_1, X, X_2)
  \Delta^{-\ell_1}g(X)\right]\right)^2
                         \mathbb{I}_{[X_1<X_2]} \right]. 
\label{eq:32}  \end{align}
Next, for any $h_1$ such that $\mathbb{P}[\Delta^{-\ell_1}h_1(X)>0]=1$, we
have, thanks to \eqref{eq:lagrangeidentityp} and conditionally on
$X_1, X_2$: 
\begin{align} 
  &  \bigg(\mathbb{E}_X\left[   \Phi^{\ell_1}_p(X_1, X, X_2)
  \Delta^{-\ell_1}g(X)\right]\bigg)^2   =  \bigg(\mathbb{E}_X\left[   \Phi^{\ell_1}_p(X_1, X, X_2)
  \frac{\Delta^{-\ell_1}g(X)}{\sqrt{\Delta^{-\ell_1}h_1(X)}}\sqrt{\Delta^{-\ell_1}h_1(X)}\right]\bigg)^2
    \nonumber \\
  &   = \mathbb{E}_X\left[   
  \frac{\big(\Delta^{-\ell_1}g(X)\big)^2}{{\Delta^{-\ell_1}h_1(X)}} \Phi^{\ell_1}_p(X_1, X, X_2) \right]
    \mathbb{E}_{X'}\left[  
    \Delta^{-\ell_1}h(X')  \Phi^{\ell_1}_p(X_1, X', X_2) \right]\nonumber   \\
  & \quad  - \mathbb{E}_{X_3, X_4}\left[  
  \bigg(
    \frac{\Delta^{-\ell_1}g(X_3)}{\sqrt{\Delta^{-\ell_1}h_1(X_3)}}\sqrt{\Delta^{-\ell_1}h_1(X_4)}
    - \frac{\Delta^{-\ell_1}g(X_4)}{\sqrt{\Delta^{-\ell_1}h_1(X_4)}}\sqrt{\Delta^{-\ell_1}h_1(X_3)}
    \bigg)^2 
    \Phi^{\ell_1}_p(X_1, X_3, X_4, X_2) \right] \nonumber  \\
  & =: I_1(X_1, X_2) - I_2(X_1, X_2)\label{eq:31}
\end{align}
($X'$ in the second line is another independent copy of $X$). 
We need to compute
$ \mathbb{E}[I_1(X_1, X_2) \mathbb{I}_{[X_1<X_2]}] -
\mathbb{E}[I_2(X_1, X_2) \mathbb{I}_{[X_1<X_2]}]$. We begin by noting
that, in the discrete case, the strict inequality in the indicator
$\mathbb{I}_{[X_1<X_2]}$ is implicit in
$\Phi^{\ell_1}_p(X_1, X, X_2) = \chi^{\ell_1}(X_1, X)\chi^{-\ell_1}(X,
X_2)/p(X)$ and hence a fortiori also in
$\Phi^{\ell_1}_p(X_1, X_3, X_4, X_2)$. In the continuous case there is
no difference between $\mathbb{I}_{[X_1<X_2]}$ and
$\mathbb{I}_{[X_1\le X_2]}$.  We treat the two terms
separately. First,
\begin{align}
& \mathbb{E}[I_1(X_1, X_{2})\mathbb{I}_{[X_1<X_2]}]\nonumber\\
 & = \mathbb{E}\left[ \mathbb{E}_X\left[     
  \frac{\big(\Delta^{-\ell_1}g(X)\big)^2}{{\Delta^{-\ell_1}h_1(X)}}
                  \Phi^{\ell_1}_p(X_1, X, X_2) \right]
   \mathbb{E}_{X'}\left[  
    \Delta^{-\ell_1}h_1({X'})  \Phi^{\ell_1}_p(X_1, {X'}, X_2) \right] \mathbb{I}_{[X_1<X_2]}  \right] \nonumber\\
  & =  \mathbb{E} \left[
    \frac{\big(\Delta^{-\ell_1}g(X)\big)^2}{{\Delta^{-\ell_1}h_1(X)}}
    \Phi^{\ell_1}_p(X_1, X, X_2) \mathbb{E}_{X'}\left[  
    \Delta^{-\ell_1}h_1({X'})  \Phi^{\ell_1}_p(X_1, {X'}, X_2) \right]  \right] \nonumber\\
  & =  \mathbb{E} \left[
    \frac{\big(\Delta^{-\ell_1}g(X)\big)^2}{{\Delta^{-\ell_1}h_1(X)}}
    \big(h_1(X_2)-h_1(X_1)\big) \Phi^{\ell_1}_p(X_1, X, X_2)\right], \label{eq:46}
\end{align}
the last identity by \eqref{eq:28}. For the second term we have
\begin{align}
&   \mathbb{E}[I_2(X_1, X_{2})\mathbb{I}_{[X_1<X_2]}] \nonumber\\
&  = \mathbb{E} \left[ 
                    \bigg(
                    \frac{\Delta^{-\ell_1}g(X_3)}{\sqrt{\Delta^{-\ell_1}h_1(X_3)}}\sqrt{\Delta^{-\ell_1}h_1(X_4)}
                    - \frac{\Delta^{-\ell_1}g(X_4)}{\sqrt{\Delta^{-\ell_1}h_1(X_4)}}\sqrt{\Delta^{-\ell_1}h_1(X_3)}
                    \bigg)^2 
                    \Phi^{\ell_1}_p(X_1, X_3, X_4, X_2) \right] \nonumber\\
  & = \mathbb{E} \left[ \Delta^{-\ell_1}h_1(X_3) \Delta^{-\ell_1}h_1(X_4) 
                    \big( g_1(X_4) - g_1(X_3)                  
                    \big)^2 
                    \Phi^{\ell_1}_p(X_1, X_3, X_4, X_2) \right] \label{eq:49}
\end{align}
with $g_1(x) =
{\Delta^{-\ell_1}g(x)}/{\Delta^{-\ell_1}h_1(x)}$, as
anticipated. Combining \eqref{eq:46} and \eqref{eq:49} we obtain
\eqref{eq:varianceexpansion} at $n=1$.  

 To pursue towards $n=2$, starting
from \eqref{eq:49}, we simply apply the same process as above:
\begin{align*}
&  \mathbb{E}
                 \left[  \Delta^{-\ell_1}h_1(X_3, X_4)  \left(
                 g_1(X_4) - g_1(X_3) \right)^2
                 \Phi^{\pmb\ell}_{1,2}(X_1, X_3, X_4, X_2)\right] \\
  & = \mathbb{E} \left[ \Delta^{-\ell_1}h_1(X_3, X_4)
    \left(\mathbb{E}_X\left[   \Phi^{\ell_2}_p(X_3, X, X_4) 
  \Delta^{-\ell_2}g_1(X)\right]\right)^2\Phi^{\pmb\ell}_{1,2}(X_1, X_3, X_4, X_2) \right] \\
& =: \mathbb{E} \left[ \Delta^{-\ell_1}h_1(X_3, X_4)   I_{21}(X_3, X_4)
 \Phi^{\pmb\ell}_{1,2}(X_1, X_3, X_4, X_2) \right] \\
& \quad - \mathbb{E} \left[ \Delta^{-\ell_1}h_1(X_3, X_4)   I_{22}(X_3, X_4)
 \Phi^{\pmb\ell}_{1,2}(X_1, X_3, X_4, X_2)  \right] 
\end{align*}
with $I_{2j}, j=1, 2$ defined (in terms of a function $h_2$) through
\eqref{eq:31}. Again we treat the terms separately: 
\begin{align*}
  &  \mathbb{E} \left[ \Delta^{-\ell_1}h_1(X_3, X_4)   I_{21}(X_3, X_4)
   \Phi^{\pmb\ell}_{1,2}(X_1, X_3, X_4, X_2)  \right]\\
  & =   \mathbb{E} \left[
    \frac{(\Delta^{-\ell_2}g_1(X))^2}{\Delta^{-\ell_2}h_2(X)}\big(h_2(X_4)-h_2(X_3)\big)  
    \Delta^{-\ell_1}h_1(X_3, X_4) \Phi^{\ell_2}_p(X_3, X, X_4)   \Phi^{\pmb\ell}_{1,2}(X_1, X_3, X_4, X_2)\right] 
\end{align*}
and 
\begin{align*}
  &   \mathbb{E} \left[ \Delta^{-\ell_1}h_1(X_3, X_4)    I_{22}(X_3, X_4)
    \Phi^{\pmb\ell}_{1,2}(X_1, X_3, X_4, X_2)  \right]  \\
  & = \mathbb{E} \left[ \Delta^{-\ell_1}h_1(X_3, X_4)
    \Delta^{-\ell_2}h_2(X_5, X_6) \big(g_2(X_6) -
    g_2(X_5)\big)^2 \Phi^{\ell_2}_p(X_3, X_5, X_6, X_4)
    \Phi^{\pmb\ell}_{1,2}(X_1, X_3, X_4, X_2)\right].     
\end{align*}
  The
complete statement follows by induction and careful bookkeeping of all
the indices.
\end{proof}

\begin{rmk}[About the assumptions in the theorem]
  A stronger sufficient condition on the functions $h_i$ is that they
  be strictly increasing throughout $\mathcal{S}(p)$, in which case
  the condition $\Delta^{\ell}h(x)>0$ is guaranteed. The prototypical
  example of such a sequence is $h_i(x) = x$ for all $i \ge 1$ which
  clearly satisfies all the required assumptions.  When $\ell\neq 0$
  then the condition that $\mathbb{P}[\Delta^{\ell_k}h_k(X)>0]=1$ is
  in fact too restrictive because, as has been made clear in the
  proof, the recurrence only implies that $\Delta^{\ell_k}h_k(x)$
  needs to be positive on $(a+i(k), b-j(k))$ for some indices
  $i(k), j(k)$ depending on the behavior of $X_{2k-1}$ and $X_{2k}$
  (see the line just below equation \eqref{eq:32} when $k=1$). In
  particular the sequence necessarily stops if $\mathcal{S}(p)$ is
  bounded, since after a certain number of iterations the indicator
  functions defining $\Phi_{n, j}^{\pmb\ell}$ will be 0 everywhere. 
\end{rmk}

\begin{rmk}
  When $g$ is a $d$th-degree polynomial, we obtain an exact expansion
  of the variance in \eqref{eq:varianceexpansion} with respect to the
  $\Gamma_k^{\pmb \ell}(x)$ functions ($k=1,\ldots,d$) as the
  $R_{n}^{\pmb \ell}$ is defined in terms of the $n$-th derivative of
  $g$.
\end{rmk}

The functions $\Gamma_k^{\ell_k}$ defined in \eqref{eq:2} are some
sort of generalization of Peccati's and Ledoux' gamma mentioned in the
Introduction. To see this, choose $h_k(x) = x$ for all $k$ (arguably
the most natural choice) for which
$ \Delta^{\ell}h_k(x) = 1 \mbox{ for all } k$ so that
  \begin{align}\label{eq:55}
  \Gamma_{k}^{\pmb \ell}(x) 
  & = \mathbb{E} \left[ (X_{2k}-X_{2k-1})\Phi_{k,
    1}^{\pmb\ell}(X_1, \ldots,   X_{2k-1}, x, X_{2k}, \ldots,  X_2)
    \right]
  \end{align}
  for all $k\ge1$ we have.  Expliciting the above leads to the
  expressions:
\begin{align*}
&   \Gamma_1^{\ell_1}(x)  = \mathbb{E} \bigg[(X_2-X_1)
  \frac{\chi^{\ell_1}(X_1, x) \chi^{-\ell_1}(x, X_2)}{p(x)}\bigg] \\
  &   \Gamma_2^{\ell_1, \ell_2}(x)  = \mathbb{E} \bigg[(X_4-X_3)
  \frac{\chi^{\ell_1}(X_1, X_3) \chi^{\ell_2}(X_3, x)\chi^{-\ell_2}(x,
    X_4)\chi^{-\ell_1}(X_4, X_2)}{p(X_3)p(x)p(X_4)}\bigg]\\
  &   \Gamma_3^{\ell_1, \ell_2, \ell_3}(x)  = \mathbb{E} \bigg[(X_6-X_5)
  \frac{\chi^{\ell_1}(X_1, X_3) \chi^{\ell_2}(X_3,
    X_5)\chi^{\ell_3}(X_5,x)
    \chi^{-\ell_3}(x,
    X_6)\chi^{-\ell_2}(X_4, X_2)\chi^{-\ell_1}(X_4,
    X_2)}{p(X_3)p(X_5)p(x)p(X_6)p(X_4)}\bigg] \\
  & \mathrm{etc.}
\end{align*}
In particular $\Gamma_1^{\pmb \ell}(x) = \tau_p^{\ell_1}(x)$ is the
Stein kernel of $X$. 
The next lemma follows {by induction.}
\begin{lma}\label{lma:gamma}
If $\ell = 0$ then
\begin{equation}
\Gamma_{k}^{0}(x)  = \label{eq:57}
    \frac{1}{p(x)}\mathbb{E}
    \Bigg[\frac{1}{k!(k-1)!} (x-X_{1})^{k-1}(X_{2}-x)^{k-1}(X_{{2}}-X_{1})  \mathbb{I}_{[X_1
      \le x \le X_2]} \Bigg]. 
\end{equation}
{
and if ${\pmb \ell} \in \{-1,1\}^k$, then 
\begin{equation}
\Gamma_{k}^{\pmb \ell}(x)  = \label{eq:57Discrete}
   \frac{1}{p(x)} \mathbb{E}
    \Bigg[ \frac{1}{k!(k-1)!} (x-X_{1}-a_{\pmb \ell} +1)^{\lceil k-1\rceil}(X_{2}-x-b_{\pmb \ell} +1)^{\lceil k-1\rceil}(X_{{2}}-X_{1})  \mathbb{I}_{[X_1 + a_{\pmb \ell}  \le x \le X_2 + b_{\pmb \ell}]} \Bigg]. 
\end{equation}
where 
$a^{\lceil k\rceil}(x)= a(a+1)\cdots (a+k-1)$, $a_{\pmb \ell}= \sum_{i=1}^k \frac{l_i(l_i+1)}{2}$ and $b_{\pmb \ell}= \sum_{i=1}^k \frac{l_i(1-l_i)}{2}$. 
}
\end{lma}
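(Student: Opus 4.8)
The plan is to prove both identities by first unfolding the recursive definition of the kernel $\Phi_{k,1}^{\pmb\ell}$ into a single product of nested indicators, and then integrating (in the continuous case) or summing (in the discrete case) out all but two of the independent copies of $X$.

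First I would establish, by induction on $k$ using the recursions \eqref{eq:50} and \eqref{eq:38} together with the stated starting values, that for $\ell=0$
\[
\Phi_{k,1}^{0}(x_1,x_3,\dots,x_{2k-1},x,x_{2k},\dots,x_2)=\frac{\mathbb{I}_{x_1\le x_3\le\cdots\le x_{2k-1}\le x\le x_{2k}\le\cdots\le x_4\le x_2}}{p(x)\prod_{i=1}^{k-1}p(x_{2i+1})p(x_{2i+2})}.
\]
The key point is that each application of the recursion multiplies by exactly one factor $\Phi_p^{0}$, i.e.\ one more nested pair of indicators and the reciprocal densities of the two newly introduced variables. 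The companion statement for $\Phi_{k,2}^{0}$, needed to close the induction, has the same form, with the single innermost argument $x$ replaced by the innermost pair $x_{2k+1}\le x_{2k+2}$.

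Second, I would substitute this closed form into the definition \eqref{eq:55} of $\Gamma_k^{0}$. The crucial feature is that every intermediate variable $X_3,\dots,X_{2k}$ appears in the denominator of $\Phi_{k,1}^0$ with exactly the density it carries in the expectation, so those densities cancel and only $p(X_1)p(X_2)$ survive; what remains is a pure volume computation. The $k-2$ left intermediate variables must satisfy $X_1\le X_3\le\cdots\le X_{2k-1}$, contributing the simplex volume $(X_{2k-1}-X_1)^{k-2}/(k-2)!$, and symmetrically the right ones contribute $(X_2-X_{2k})^{k-2}/(k-2)!$. Integrating the innermost pair $X_{2k-1}\in[X_1,x]$ and $X_{2k}\in[x,X_2]$ against the weight $(X_{2k}-X_{2k-1})$, which I would split as $(X_{2k}-x)+(x-X_{2k-1})$ so that the double integral factorises, produces $(x-X_1)^{k-1}(X_2-x)^{k-1}(X_2-X_1)$ up to a multiplicative constant. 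A short bookkeeping of factorials, using $(k-1)(k-2)!=(k-1)!$ and $k\,(k-1)!=k!$, collapses that constant to $1/(k!(k-1)!)$, giving exactly \eqref{eq:57}.

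For the discrete case $\pmb\ell\in\{-1,1\}^k$ the same scheme applies, with integrals replaced by sums over $\Z$ and the Dirichlet simplex volume replaced by its discrete analogue: summing a rising factorial raises its order by one via the telescoping identity $(j+1)^{\lceil m\rceil}=\frac{1}{m+1}\big[(j+1)^{\lceil m+1\rceil}-j^{\lceil m+1\rceil}\big]$. The only genuinely new ingredient, and the step I expect to be the main obstacle, is the bookkeeping of the order constraints: each factor $\chi^{+}$ enforces a strict inequality $\mathbb{I}_{u\le v-1}$ while each $\chi^{-}$ enforces $\mathbb{I}_{u\le v}$, and the middle factors $\chi^{\ell^2}=\chi^{+}$ always contribute a unit shift. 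Tracking the accumulated shifts along the left chain and along the right chain separately is precisely what produces the offsets $a_{\pmb\ell}=\sum_i \ell_i(\ell_i+1)/2$ and $b_{\pmb\ell}=\sum_i \ell_i(1-\ell_i)/2$, both in the indicator $\mathbb{I}_{X_1+a_{\pmb\ell}\le x\le X_2+b_{\pmb\ell}}$ and in the shifted rising-factorial arguments of \eqref{eq:57Discrete}; once these shifts are correctly aligned, the factorial bookkeeping is identical to the continuous case.
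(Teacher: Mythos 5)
Your proposal is correct and follows essentially the same route as the paper: the paper likewise unrolls $\Phi_{k,1}^{\pmb\ell}$ into the nested product of indicators and reciprocal densities, cancels the intermediate densities, and establishes the closed form by induction on $k$, the inductive step being exactly the Beta-type integral you describe (including the split of the weight $(X_{2k}-X_{2k-1})$ into two pieces so that the double integral factorizes and the factorials collapse to $1/(k!(k-1)!)$). The only organizational difference is that the paper peels off one outer pair of variables per induction step while you integrate out all intermediates in a single simplex-volume computation, and both treatments leave the discrete shift bookkeeping for $a_{\pmb\ell}$, $b_{\pmb\ell}$ at the same level of detail.
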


\begin{proof}

We define $\gamma_{k}^{\pmb \ell}(x,X_1,X_2)$ such that 
\begin{equation}
\Gamma_k^{\pmb \ell}(x) = \frac{1}{p(x)} \mathbb{E} \bigg[ \gamma_k^{\pmb \ell}(x,X_1,X_2) \bigg]
\end{equation}
The proof is complete if we have, in the continuous case,
\begin{equation}\label{lmaCont}
\gamma_k^0(x) = \frac{1}{k!(k-1)!} (x-X_{1})^{k-1}(X_{2}-x)^{k-1}(X_{{2}}-X_{1}) \mathbb{I}_{[X_1 \le x \le X_2 ]}
\end{equation}
and in the discrete case,
\begin{equation}\label{lmaDisc}
\gamma_k^{\pmb \ell}(x) = \frac{1}{k!(k-1)!} (x-X_{1}-A_{\pmb \ell} +1)^{\lceil k-1\rceil}(X_{2}-x-B_{\pmb \ell} +1)^{\lceil k-1\rceil}(X_{{2}}-X_{1})  \mathbb{I}_{[X_1 + A_{\pmb \ell}  \le x \le X_2 + B_{\pmb \ell}]}
\end{equation}

We prove \eqref{lmaCont} and \eqref{lmaDisc} by induction. 
Firstly, for $k=1$, $\ell=0,1$ or -1, we obtain
\begin{align*}
  \Gamma^\ell_{1}(x) & = \frac{1}{p(x)}\mathbb{E} \Bigg[(X_{{2}}-X_{1})  \mathbb{I}_{[X_1+\frac{\ell(\ell+1)}{2} \le x \le  X_2 + \frac{\ell(1-\ell)}{2}]}\Bigg]\\
\end{align*}

After conditioning with respect to the ``extreme'' variables, the expression of \eqref{eq:55} can be rewritten as  
\begin{align}
\Gamma_{k+1}^{\pmb \ell}(x) 
=& \mathbb{E} \left[ (X_{2k+2}-X_{2k+1})\Phi_{k+1,1}^{\pmb\ell}(X_1, \ldots,   X_{2k+1}, x, X_{2k+2}, \ldots, X_2) \right] \nonumber\\
=& \mathbb{E} \bigg[ (X_{2k+2}-X_{2k+1}) 
\prod_{i=1}^{k} \left( \frac{\chi^{\ell_i}(X_{2i-1},X_{2i+1})}{p(X_{2i+1})} \frac{\chi^{-\ell_i}(X_{2i+2},X_{2i})}{p(X_{2i+2})} \right) \frac{\chi^{\ell_{k+1}}(X_{2k+1},x) \chi^{-\ell_{k+1}}(x,X_{2k+2})}{p(x)}
\bigg] \nonumber\\
%
=:& \frac{1}{p(x)} \mathbb{E}_{X_1,X_2} \bigg[
\mathbb{E}_{X_3,X_4} \bigg[
\frac{\chi^{\ell_1}(X_{1},X_{3})}{p(X_{3}} \frac{\chi^{-\ell_1}(X_{4},X_{2})}{p(X_{4}}
\gamma_k^{\ell_2,\ldots,\ell_{k+1}}(x,X_3,X_4)
 \, \Bigg|\, X_1, X_2  \bigg] \bigg]
\end{align}
This expression allows us to conclude the assertion by induction,  in the continuous case and in the discrete case separately.
\end{proof}

\begin{rmk}
  The results e.g.\ from \cite{papathanasiou1988variance,J93,APP07,APP11,AfBalPa2014} permit to
  identify that if $X \sim p$ is Pearson distributed then
  $\Gamma_{k}^0(x) = \tau_p(x)^k$, and if $X$ is Ord distributed then
  $\Gamma_{k}^0(x) = \tau_p(x)^{[k]}$ with
  $f^{[k]}(x) = f(x-k+1) \cdots f(x)$. 
\end{rmk}

\begin{exm}[Normal bounds] \label{ex:normbounds}
  Direct computations show that if $X \sim \mathcal{N}(0, 1)$ then
  $\Gamma_1^0(x) = 1$ and {$\Gamma_2^0(x) = \frac{1}{2}$} so that the first two  bounds become
  \begin{align*}
    \mathrm{Var}[g(X)] &  = \mathbb{E} \left[ g'(X)^2 \right] - R_1  \\
 &  = \mathbb{E} \left[ g'(X)^2 \right] -
    \frac{1}{2}\mathbb{E} \left[ g''(X)^2 \right] + R_2
  \end{align*}
\end{exm}

\begin{exm}[Beta bounds] \label{ex:betabounds}
  Direct computations show that if $X \sim \mathrm{Beta}(\alpha, \beta)$ then
  $\Gamma_1^0(x)=x(1-x)/(\alpha+\beta)$ and {$\Gamma_2^0(x) =
  (x(1-x))^2/(2(\alpha+\beta)(\alpha+\beta+1))$} so that the first two bounds become
  \begin{align*}
    \mathrm{Var}[g(X)] &  = \frac{1}{\alpha+\beta}\mathbb{E} \left[
                         X(1-X)g'(X)^2 \right]
                         - R_1\\
 &  =  \frac{1}{\alpha+\beta} \mathbb{E} \left[X(1-X)  g'(X)^2 \right] -
   \frac{1}{2(\alpha+\beta)(\alpha+\beta+1)}  \mathbb{E} \left[ {X^2(1-X)^2}
   g''(X)^2 \right] +  R_2
  \end{align*}
  
\end{exm}
\begin{exm}[Gamma bounds]
  \label{ex:gammabounds}
  Direct
  computations show that $X \sim \mathrm{Gamma}(\alpha, \beta)$ then
  $\Gamma_1^0(x) = \beta x$ and
  $\Gamma_2^0(x) = \frac{1}{2}\beta^2x^{2}$ so that the
  first two bounds become
  \begin{align*}
    \mathrm{Var}[g(X)] &  =  \beta \mathbb{E} \left[
                         X g'(X)^2 \right]
                         - R_1\\
 &  =  \beta \mathbb{E} \left[ X  g'(X)^2 \right] -
    \frac{\beta^2}{2}\mathbb{E} \left[   X^2 
   g''(X)^2 \right] +  R_2
  \end{align*}
\end{exm}
\begin{exm}[Laplace bounds] \label{ex:laplacebounds} Direct
  computations show that $X \sim \mathrm{Laplace}(0, 1)$ then
{$\Gamma_1^0(x) = 1+|x|$ and $\Gamma_2^0(x) = \frac{1}{2}x^2 + |x| + 1$} so that the
  first two bounds become
  \begin{align*}
    \mathrm{Var}[g(X)] &  =  
   { \mathbb{E} \left[(1+|X|)g'(X)^2 \right] - R_1 }\\
& =  \mathbb{E} \left[(1+|X|) g'(X)^2 \right] -
    \mathbb{E} \left[ (1+|X|+ X^2/2) g''(X)^2 \right] +  R_2
  \end{align*}
\end{exm}
{In the discrete case}, there is much more flexibility in the
construction of the bounds as any permutation of $+1$ and $-1$ is
allowed at every stage (that is, 
for every $k$), leading to: 
\begin{align*}
    \mathrm{Var}[g(X)] &  =  \mathbb{E} \left[
                         \Gamma_1^+(X)    (\Delta^{-}g(X))^2 \right] -
                         R_1^{+}  = \mathbb{E} \left[
                         \Gamma_1^-(X)    (\Delta^{+}g(X))^2 \right] -
                         R_1^-
  \end{align*}
  and at order 2:
{  \begin{align*}
 \mathrm{Var}[g(X)] &  =  
 \mathbb{E} \left[ \Gamma_1^+(X)(\Delta^{-}g(X))^2 \right] -
 \mathbb{E} \left[ \Gamma_2^{++}(X) (\Delta^{-, -}g(X))^2 \right]  + 
                         R_2^{++}\\
 &  =  \mathbb{E} \left[ \Gamma_1^+(X) (\Delta^{-}g(X))^2 \right] -
                         \mathbb{E} \left[\Gamma_2^{+-}(X) (\Delta^{-, +}g(X))^2
                         \right]  +  R_2^{+-}\\
   &  =  \mathbb{E} \left[
                         \Gamma_1^-(X)    (\Delta^{+}g(X))^2 \right] -
                         \mathbb{E} \left[ \Gamma_2^{-+}(X) (\Delta^{+,-}g(X))^2
                         \right]  +  R_2^{-+}\\
       &  =  \mathbb{E} \left[ \Gamma_1^-(X) (\Delta^{+}g(X))^2 \right] -
                         \mathbb{E} \left[
                         \Gamma_2^{--}(X) (\Delta^{+, +}g(X))^2 \right]  +  R_2^{--}. 
\end{align*}
}{where we use the concise notation $\Delta^{\ell_1, \ell_2}g(X)$ for $\Delta^{\ell_2} \left( \Delta^{\ell_1}g(X)\right)$.}

  We detail the bounds in several settings.
\begin{exm}[Binomial bounds]\label{ex:binomgamma}
Direct
  computations show that $X \sim \mathrm{Bin}(n, p)$ then
  \begin{equation*}
    \Gamma_1^+(x) = (1-p) x, \quad   \Gamma_1^-(x) =  p(n-x)
  \end{equation*}
    and
    \begin{equation*}
      \Gamma_2^{++}(x) = \frac{1}{2}(1-p)^2x(x-1),
      \quad  \Gamma_2^{+-}(x) =  \Gamma_2^{-+}(x) =\frac{1}{2}p (1-p) x(n-x) 
\mbox{ and }  \Gamma_2^{--}(x) = \frac{1}{2}p^2(n-x-1)(n-x) 
    \end{equation*}
    so that the first bounds become at order 1:
  \begin{align*}
    \mathrm{Var}[g(X)] &  =  (1-p) \mathbb{E} \left[X (\Delta^{-}g(X))^2 \right] - R_1^{+}\\
    & = p \mathbb{E} \left[(n-X)  (\Delta^{+}g(X))^2 \right] -  R_1^-
  \end{align*}
  and at order 2:
{  \begin{align*}
\mathrm{Var}[g(X)] 
&  =  (1-p) \mathbb{E} \left[ X  (\Delta^{-}g(X))^2 \right] - 
\frac{1}{2}(1-p)^2 \mathbb{E} \left[ X(X-1)  (\Delta^{-, -}g(X))^2 \right]  +  R_2^{++}\\
&  =  (1-p) \mathbb{E} \left[ X  (\Delta^{-}g(X))^2 \right] -
\frac{1}{2}p (1-p)  \mathbb{E} \left[X(n-X) (\Delta^{-,+}g(X))^2 \right]  +  R_2^{+-}\\
&  = p  \mathbb{E} \left[(n-X)   (\Delta^{+}g(X))^2 \right] -
\frac{1}{2}p (1-p) \mathbb{E} \left[ X(n-X) (\Delta^{+,-}g(X))^2 \right]  + R_2^{-+} \\
&  = p\mathbb{E} \left[ (n-X)   (\Delta^{+}g(X))^2 \right] -
 \frac{1}{2}p^2 \mathbb{E} \left[ (n-X-1)(n-X) (\Delta^{+, +}g(X))^2 \right]  + R_2^{--}\\
\end{align*}
}
  We note that \cite[Theorem 1.3]{hillion2011natural} prove the bound
  \begin{equation*}
    \mbox{Var}[g(X)] \le \mathbb{E} \left[ \left( \frac{X}{n}
        \Delta^-g(X) + \frac{n-X}{n} \Delta^+g(X) \right)^2 \right]
  \end{equation*}
  which is very close to a combination of the above (see their Remark
  3.3) but, as far as we can see, remains a different result. The
  similarity of the two results is striking but seemingly fortuitous.
\end{exm}

\begin{exm}[Poisson bounds]\label{ex:poissgamma}
Direct
  computations show that if $X \sim \mathrm{Pois}(\lambda)$ then
  \begin{equation*}
    \Gamma_1^+(x) = x, \quad   \Gamma_1^-(x) =  \lambda
  \end{equation*}
    and
    \begin{equation*}
      \Gamma_2^{++}(x) = \frac{1}{2}x(x-1),
      \quad  \Gamma_2^{+-}(x) =  \Gamma_2^{-+}(x) =\frac{1}{2} \lambda
      x 
\mbox{ and }  \Gamma_2^{--}(x) = \frac{1}{2}\lambda^2
    \end{equation*}
    so that the first bounds become at order 1:
    \begin{align*}
    \mathrm{Var}[g(X)] &  =  \mathbb{E} \left[
X    (\Delta^{-}g(X))^2 \right] - R_1^{+}\\
    & = \lambda \mathbb{E} \left[(\Delta^{+}g(X))^2 \right] -  R_1^-
  \end{align*}
  and at order 2:
{  \begin{align*}
\mathrm{Var}[g(X)] 
&  =  \mathbb{E} \left[ X  (\Delta^{-}g(X))^2 \right] - 
\frac{1}{2} \mathbb{E} \left[ X(X-1)  (\Delta^{-, -}g(X))^2 \right]  +  R_2^{++}\\
&  =  \mathbb{E} \left[ X  (\Delta^{-}g(X))^2 \right] -
\frac{1}{2}\lambda  \mathbb{E} \left[X (\Delta^{-,+}g(X))^2 \right]  +  R_2^{+-}\\
&  = \lambda  \mathbb{E} \left[(\Delta^{+}g(X))^2 \right] -
\frac{1}{2} \lambda \mathbb{E} \left[ X (\Delta^{+,-}g(X))^2 \right]  + R_2^{-+} \\
&  = \lambda \mathbb{E} \left[ (\Delta^{+}g(X))^2 \right] -
 \frac{1}{2} \lambda^2 \mathbb{E} \left[ (\Delta^{+, +}g(X))^2 \right]  + R_2^{--}
\end{align*}
}
\end{exm}

\section{Olkin-Shepp-type bounds}
\label{sec:mult-extens-olkin}

As mentioned in the introduction, a matrix extension of Chernoff's
gaussian bound \eqref{eq:70} is due to \cite{olkin2005matrix}, and the
result is obtained by expanding the test functions in the Hermite
basis. An extension of this result to a wide class of multivariate
densities is proposed in \cite{afendras2011matrix} (see also
references therein). Once again, our notations allow to extend this
result to arbitrary densities of real-valued random variables.
\begin{thm}[Olkin-Shepp first order bound]\label{theo:oklinshepp}
  Let all previous notations  and assumptions prevail. Then, for all
  $h$ such that $\Delta^{-\ell}h(X)\neq 0$ a.s.\ 
  \begin{align}\nonumber
  &   \begin{pmatrix}
      \mathrm{Var}[f(X)] & \mathrm{Cov}[f(X), g(X)] \\
      \mathrm{Cov}[f(X), g(X)] & \mathrm{Var}[g(X)]
    \end{pmatrix} 
    \le  \label{eq:74}
\mathbb{E} \left[
 \begin{pmatrix}\big(\Delta^{-\ell}f(X)\big)^2 &
\Delta^{-\ell}f(X)\Delta^{-\ell}g(X)\\
\Delta^{-\ell}f(X)\Delta^{-\ell}g(X) & \big(\Delta^{-\ell}g(X)\big)^2
\end{pmatrix}\frac{\Gamma_1^{\ell}(X)}{\Delta^{-\ell}h(X)} 
\right]
  \end{align}
   with
 \begin{equation*}
   \Gamma_1^{\ell}(x) =  \mathbb{E} \left[  (h(X_2)-h(X_1))
     \Phi_p^{\ell}(X_1, x, X_2)\right],
 \end{equation*}
as defined in \eqref{eq:2}. 
 \end{thm}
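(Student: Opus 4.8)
The plan is to reduce the matrix inequality \eqref{eq:74} to the scalar Klaassen upper bound \eqref{eq:19} by testing against an arbitrary vector. Recall that for symmetric $2\times 2$ matrices the ordering $A\preceq B$ (i.e.\ $B-A$ nonnegative definite) is equivalent to $\begin{pmatrix} s & t\end{pmatrix}A\begin{pmatrix} s\\ t\end{pmatrix}\le \begin{pmatrix} s & t\end{pmatrix}B\begin{pmatrix} s\\ t\end{pmatrix}$ for every $(s,t)\in\R^2$. I would therefore fix such a pair $(s,t)$, introduce the single test function $\phi=sf+tg$, and show that the two quadratic forms compare correctly.

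Computing the quadratic form of the left-hand side, bilinearity of the covariance gives $\begin{pmatrix} s & t\end{pmatrix}\begin{pmatrix}\mathrm{Var}[f(X)] & \mathrm{Cov}[f(X),g(X)]\\ \mathrm{Cov}[f(X),g(X)] & \mathrm{Var}[g(X)]\end{pmatrix}\begin{pmatrix} s\\ t\end{pmatrix}=\mathrm{Var}[\phi(X)]$. For the right-hand side, the inner matrix is the rank-one outer product of the vector $\big(\Delta^{-\ell}f(X),\Delta^{-\ell}g(X)\big)$ with itself; contracting it against $(s,t)$ and using linearity of $\Delta^{-\ell}$, namely $s\,\Delta^{-\ell}f+t\,\Delta^{-\ell}g=\Delta^{-\ell}\phi$, the right-hand quadratic form collapses to $\mathbb{E}\big[(\Delta^{-\ell}\phi(X))^2\,\Gamma_1^{\ell}(X)/\Delta^{-\ell}h(X)\big]$. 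The structural point that makes the argument work is that this weight $\Gamma_1^{\ell}(X)/\Delta^{-\ell}h(X)$ depends only on $h$ and not on $f$, $g$ or $(s,t)$, so a single choice of $h$ controls every linear combination simultaneously.

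It then remains to establish the scalar inequality $\mathrm{Var}[\phi(X)]\le \mathbb{E}\big[(\Delta^{-\ell}\phi(X))^2\,\Gamma_1^{\ell}(X)/\Delta^{-\ell}h(X)\big]$ for each such $\phi$. By Representation formula III (Lemma \ref{lem:represnt}, equation \eqref{eq:reprsntform}) the quantity $\Gamma_1^{\ell}$ of the statement satisfies $\Gamma_1^{\ell}(x)=-\mathcal{L}_p^{\ell}h(x)$, so the weight is exactly $-\mathcal{L}_p^{\ell}h(X)/\Delta^{-\ell}h(X)$. Consequently the required scalar inequality is precisely the Klaassen upper bound \eqref{eq:19} of Theorem \ref{thm:klaassenbounds} applied to the test function $\phi$. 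Since that bound holds for every admissible test function with a weight fixed by $h$ alone, it applies to all $\phi=sf+tg$ at once, and letting $(s,t)$ range over $\R^2$ delivers \eqref{eq:74}.

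The main (and rather mild) obstacle is the transfer of hypotheses. Applying \eqref{eq:19} requires the weight $-\mathcal{L}_p^{\ell}h/\Delta^{-\ell}h=\Gamma_1^{\ell}/\Delta^{-\ell}h$ to be almost surely nonnegative and the product $\mathcal{L}_p^{\ell}h(\cdot)\phi(\cdot-\ell)$ to lie in $\mathcal{F}^{(1)}_{\ell}(p)$. The positivity is ensured by the hypothesis $\Delta^{-\ell}h(X)\neq 0$ a.s.\ together with the positivity of the weight recorded in the Remark following Theorem \ref{thm:klaassenbounds}; the membership condition passes from $f$ and $g$ to $\phi=sf+tg$ by linearity of the Stein class. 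Once these routine checks are in place the reduction is complete, and no further computation is needed.
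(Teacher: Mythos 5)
Your proof is correct, but it takes a genuinely different route from the paper. You reduce the matrix inequality to the scalar Klaassen upper bound \eqref{eq:19} by polarization: test both sides against an arbitrary $(s,t)\in\R^2$, note that the left quadratic form is $\mathrm{Var}[sf(X)+tg(X)]$ and that the right one collapses (since the inner matrix is a rank-one outer product and $\Delta^{-\ell}$ is linear) to $\mathbb{E}\big[(\Delta^{-\ell}(sf+tg)(X))^2\,\Gamma_1^{\ell}(X)/\Delta^{-\ell}h(X)\big]$, and then invoke Theorem \ref{thm:klaassenbounds} for $\phi=sf+tg$ after identifying $\Gamma_1^{\ell}=-\mathcal{L}_p^{\ell}h$ via \eqref{eq:reprsntform}. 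The paper instead writes the covariance matrix through the double-copy representation \eqref{eq:27}, inserts the increment representation \eqref{eq:28}, and applies the Matrix Cauchy--Schwarz inequality of Lemma \ref{lma:matrixcauchyschwarz} (itself proved through the probabilistic Lagrange identity \eqref{eq:lagrangeidentityp}) to the vector $\big(\Delta^{-\ell}f/\sqrt{\Delta^{-\ell}h},\,\Delta^{-\ell}g/\sqrt{\Delta^{-\ell}h}\big)'$. Your argument is shorter and recycles a result already established, whereas the paper's route produces the exact nonnegative remainder matrix \eqref{eq:73}, which is what makes higher-order matrix expansions in the spirit of \cite{afendras2011matrix} accessible; the polarization argument only certifies the sign of the deficit, not its form. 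One caveat you share with the paper: the stated hypothesis $\Delta^{-\ell}h(X)\neq 0$ a.s.\ is not quite enough for either argument --- Klaassen's bound (and the square roots in the paper's proof) require $-\mathcal{L}_p^{\ell}h(X)/\Delta^{-\ell}h(X)>0$ a.s., e.g.\ $\Delta^{-\ell}h$ of constant sign --- so your appeal to the Remark after Theorem \ref{thm:klaassenbounds} is the right fix, but it is an additional assumption rather than a consequence of the stated one.
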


\begin{rmk}
Taking determinants in Theorem \ref{theo:oklinshepp} 
gives the variance-covariance inequality
 \begin{align}\nonumber
 & \mathrm{Var}[f(X)]  \mathrm{Var}[g(X)] - \{  \mathrm{Cov}[f(X), g(X)] \}^2    \\ 
   \label{eq:72}
   & \le    \mathbb{E}
     \left[\big(\Delta^{-\ell}f(X)\big)^2
     \frac{\Gamma_1^{\ell}(X)}{\Delta^{-\ell}h(X)} \right]
     \mathbb{E} \left[
     \big(\Delta^{-\ell}g(X)\big)^2\frac{\Gamma_1^{\ell}(X)}{\Delta^{-\ell}h(X)}
     \right]  -
     \mathbb{E} \left[ 
\Delta^{-\ell}f(X)\Delta^{-\ell}g(X)  
     \frac{\Gamma_1^{\ell}(X)}{\Delta^{-\ell}h(X)}
     \right]^2.  
 \end{align}
 This observation was made in \cite{olkin2005matrix} for the special
 case of $p$ being the normal distribution
 $\mathcal{N}(\mu, \sigma^2)$ and $h(x)=x$,  for which
 $\Delta^{-\ell}$ is the usual derivative and
 $\Gamma_1^{\ell}(x) = \sigma^2$.
\end{rmk}

\begin{rmk}
Multiplying the result in Theorem \ref{theo:oklinshepp} by the vector $(1,0)'$
gives the variance inequality
 \begin{align}\label{rem:2}
 \mathrm{Var}[f(X)]   \le    \mathbb{E}
     \left[\big(\Delta^{-\ell}f(X)\big)^2
     \frac{\Gamma_1^{\ell}(X)}{\Delta^{-\ell}h(X)} \right]
 \end{align}
 which corresponds to the first order of Theorem \ref{thm:var-}.
\end{rmk}

The proof of Theorem \ref{theo:oklinshepp} relies on a lemma which
seems  natural but for which we have not found a reference,
and therefore we include one for completeness.

\begin{lma}[Matrix Cauchy-Schwarz inequality]\label{lma:matrixcauchyschwarz}
  Let $\mathbf{v}(x) = (a(x), b(x))' \in \R^2$ and $f: \R \to \R$ any
  function. Then
  \begin{align}
    & \mathbb{E} \left[ \mathbf{v}(X) f(X) \Phi_p{^\ell}(u,X, v)  \right]
    \mathbb{E} \left[ \mathbf{v}(X)' f(X) \Phi_p{^\ell}(u,X, v)  \right]
      \nonumber \\
&    \qquad \qquad  \le     \mathbb{E} \left[ \mathbf{v}(X)\mathbf{v}(X)' 
      \Phi_p{^\ell}(u,X, v)  \right]  \mathbb{E} \left[ f(X)^2 \Phi_p{^\ell}(u, X,
                     v) \right], \label{eq:75}                    
  \end{align}
  where inequality in the above  is in  the  Loewner ordering, i.e.\
  the difference is nonnegative definite. 
\end{lma}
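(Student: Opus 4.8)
**

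The plan is to prove the matrix Cauchy-Schwarz inequality \eqref{eq:75} by reducing it to the scalar Cauchy-Schwarz inequality tested against every fixed direction $\mathbf{w} \in \R^2$. The key observation is that $\Phi_p^{\ell}(u, X, v) \ge 0$ almost surely (this follows from the product structure in \eqref{eq:29}, since $\chi^{\ell}$ are indicator functions and $p > 0$ on $\mathcal{S}(p)$), so the measure $\Phi_p^{\ell}(u, X, v)\, dP^X$ is a nonnegative (finite) measure, and we may apply the ordinary Cauchy-Schwarz inequality with respect to it.

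First I would fix an arbitrary vector $\mathbf{w} = (w_1, w_2)' \in \R^2$ and form the scalar functions $\varphi(x) = \mathbf{w}'\mathbf{v}(x) = w_1 a(x) + w_2 b(x)$ and $f(x)$. Applying the scalar Cauchy-Schwarz inequality against the nonnegative measure with density $\Phi_p^{\ell}(u, \cdot, v)$ gives
\begin{align*}
  \mathbb{E}\left[ \varphi(X) f(X) \Phi_p^{\ell}(u, X, v) \right]^2 \le \mathbb{E}\left[ \varphi(X)^2 \Phi_p^{\ell}(u, X, v) \right]\, \mathbb{E}\left[ f(X)^2 \Phi_p^{\ell}(u, X, v) \right].
\end{align*}
The right-hand side is exactly $\mathbf{w}'\, \mathbb{E}\left[ \mathbf{v}(X)\mathbf{v}(X)' \Phi_p^{\ell}(u, X, v) \right]\, \mathbf{w}$ times the scalar $\mathbb{E}\left[ f(X)^2 \Phi_p^{\ell}(u, X, v) \right]$, while the left-hand side is $\left( \mathbf{w}'\, \mathbb{E}\left[ \mathbf{v}(X) f(X) \Phi_p^{\ell}(u, X, v) \right] \right)^2$.

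Next I would set $\mathbf{m} = \mathbb{E}\left[ \mathbf{v}(X) f(X) \Phi_p^{\ell}(u, X, v) \right] \in \R^2$, so that the left-hand side reads $(\mathbf{w}'\mathbf{m})^2 = \mathbf{w}'(\mathbf{m}\mathbf{m}')\mathbf{w}$. The displayed scalar inequality then says precisely that
\begin{align*}
  \mathbf{w}' \left( \mathbb{E}\left[ \mathbf{v}(X)\mathbf{v}(X)' \Phi_p^{\ell}(u, X, v) \right] \mathbb{E}\left[ f(X)^2 \Phi_p^{\ell}(u, X, v) \right] - \mathbf{m}\mathbf{m}' \right) \mathbf{w} \ge 0
\end{align*}
for every $\mathbf{w} \in \R^2$. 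Since $\mathbf{m}\mathbf{m}' = \mathbb{E}\left[ \mathbf{v}(X) f(X) \Phi_p^{\ell}(u, X, v) \right] \mathbb{E}\left[ \mathbf{v}(X)' f(X) \Phi_p^{\ell}(u, X, v) \right]$, this is exactly the Loewner-ordering statement \eqref{eq:75}: the quadratic form of the difference matrix is nonnegative in every direction, which is the definition of nonnegative definiteness.

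The only genuinely delicate point is the nonnegativity of the weight $\Phi_p^{\ell}(u, X, v)$, which legitimizes treating it as the density of a measure against which Cauchy-Schwarz applies; once that is secured, everything is the standard ``test against all directions'' characterization of the positive-semidefinite cone, and no computation beyond expanding $(\mathbf{w}'\mathbf{m})^2$ is required. I would also remark that the finiteness of the three expectations (assumed implicitly via ``any function $f$'' together with the existence of the relevant integrals) is what is needed to make the scalar Cauchy-Schwarz step rigorous; no integrability issue arises beyond what is already hypothesized.
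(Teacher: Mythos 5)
Your proof is correct, and it takes a genuinely different route from the paper's. You use the classical ``test against every direction'' reduction: since $\Phi_p^{\ell}(u,\cdot,v)\ge 0$, the scalar Cauchy--Schwarz inequality applied to $\mathbf{w}'\mathbf{v}$ and $f$ under the weighted measure shows that the quadratic form of the difference matrix is nonnegative for every $\mathbf{w}\in\R^2$, which is precisely nonnegative definiteness. The paper instead writes the difference matrix $R$ out in coordinates, applies the probabilistic Lagrange identity \eqref{eq:lagrangeidentityp} to each entry, and after a symmetrization of the off-diagonal terms arrives at the exact representation \eqref{eq:73} of $R$ as the expectation of a rank-one outer product $\mathbf{u}\mathbf{u}'$ weighted by $\Phi_p^{34}$, from which nonnegative definiteness is immediate. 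Your argument is shorter, needs only the nonnegativity of the weight, and extends verbatim to vectors $\mathbf{v}$ of any dimension; the paper's argument buys an explicit formula for the remainder matrix, which is what supports its remark that a full matrix version of the Lagrange identity (and hence higher-order matrix variance expansions in the spirit of Theorem \ref{thm:var-}) should be within reach. The only point worth making explicit in your write-up is the one you already flag: the three expectations must exist, and $\Phi_p^{\ell}(u,x,v)\ge 0$ follows from \eqref{eq:29} because it is a product of indicators divided by $p(x)>0$ on $\mathcal{S}(p)$.
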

\begin{rmk}
  In light of the proof of \eqref{eq:75}, there is no doubt that a similar matrix
  version of the Lagrange identity \eqref{eq:lagrangeidentityp} holds
  as well. We have not
  explored this any further, but in particular  higher order upper bound as in \cite{afendras2011matrix} should be within reach.
\end{rmk}

\begin{proof}[Proof of Lemma \ref{lma:matrixcauchyschwarz}]
  In the proof we use the shorthand $h$ instead of $h(X)$, $h_i$
  instead of $h(X_i)$, $\Phi_p$ {or $\Phi_p^i$ for } $\Phi{_p^\ell}(u, X{_i}, v)$ and
  $\Phi_p^{ij} = \Phi{_p^\ell}(u, X_i, X_j, v)$. Writing \eqref{eq:75} out in
  coordinates, it is necessary to prove that
  \begin{align*}
   R :=     \begin{pmatrix}
  \mathbb{E} \left[ a^2 \Phi_p \right]      \mathbb{E} \left[ f^2
        \Phi_p \right] -       \mathbb{E} \left[ a f  \Phi_p \right]^2
      & \mathbb{E}[ab\Phi_p ]  \mathbb{E} \left[ f^2
        \Phi_p \right]  -   \mathbb{E} \left[ a f
        \Phi_p \right] \mathbb{E} \left[{b}  f
        \Phi_p \right] \\
\mathbb{E}[ab\Phi_p ]  \mathbb{E} \left[ f^2
        \Phi_p \right]  -   \mathbb{E} \left[ a f
        \Phi_p \right] \mathbb{E} \left[{b}  f
        \Phi_p \right] & 
      \mathbb{E} \left[ b^2 \Phi_p \right]      \mathbb{E} \left[ f^2
        \Phi_p \right] -       \mathbb{E} \left[ b f  \Phi_p \right]^2
    \end{pmatrix}
  \end{align*}
{is nonnegative definite.}  
  Next we apply \eqref{eq:lagrangeidentityp} to each
  component of the matrix $R$, which leads to
  \begin{equation*}
    R =
    \begin{pmatrix}
  \mathbb{E} \left[ (a_3 f_4 - a_4 f_3)^2 \Phi_p^{34} \right]       
      & \mathbb{E} \left[ (a_3 f_4 - a_4 f_3) (b_3 f_4 - b_4 f_3) \Phi_p^{34} \right]        \\
\mathbb{E} \left[ (a_3 f_4 - a_4 f_3) (b_3 f_4 - b_4 f_3) \Phi_p^{34} \right] & 
       \mathbb{E} \left[ (b_3 f_4 - b_4 f_3)^2 \Phi_p^{34} \right]   
    \end{pmatrix}. 
  \end{equation*}
Clearly the diagonal terms are already of the correct form. For the
off-diagonal terms we note that
\begin{align*}
  &  \mathbb{E}[( a_3f_4 -a_4f_3)( b_3f_4 -b_4f_3) \Phi_p^{34}]  \\
  & =
\mathbb{E}[a_3f_4b_3f_4 \Phi_p^{34}] - \mathbb{E}[a_3f_4b_4f_3\Phi_p^{34}] -
                                                                \mathbb{E}[a_4
                                                                f_3
                                                                b_3f_4\Phi_p^{34}]
                                                                +
                                                                \mathbb{E}[a_4f_3b_{4}
                                                                f_3\Phi_p^{34}]
\end{align*}
so that, using 
\begin{align*}
  \Phi_p^{34} = \Phi_p(u, X_3, X_4, v) =\Phi_p(u, X_3, v) \Phi_p(u,
  X_4, v)  \mathbb{I}[X_3 < X_4],
\end{align*}
we get
\begin{align*}
   &  \mathbb{E}[( a_3f_4 -a_4f_3)( b_3f_4 -b_4f_3) \Phi_p^{34}]  \\
  & =
\mathbb{E}\left[a_3b_3\Phi_p^3 f_4^2 \Phi_p^4 \mathbb{I}[X_3 < X_4] \right]
   - \mathbb{E}\left[a_3f_3 \Phi_p^3 b_4f_4  \Phi_p^4 \mathbb{I}[X_3
    < X_4] \right]
  \\
  & \quad -\mathbb{E}\left[ b_3f_3 \Phi_p^3  a_4f_4\Phi_p^4 \mathbb{I}[X_3 < X_4] \right]
  +  \mathbb{E}\left[f_3^2 \Phi_p^3 a_4b_4  \Phi_p^4 \mathbb{I}[X_3
    < X_4] \right] \\
  & = \mathbb{E}\left[a_3b_3\Phi_p^3 f_4^2 \Phi_p^4 \mathbb{I}[X_3 < X_4] \right]
   - \mathbb{E}\left[a_3f_3 \Phi_p^3 b_4f_4  \Phi_p^4 \mathbb{I}[X_3
    < X_4] \right]
  \\
  & \quad -\mathbb{E}\left[ b_4f_4 \Phi_p^4  a_3f_3\Phi_p^3 \mathbb{I}[X_4 \le X_3] \right]
  +  \mathbb{E}\left[f_4^2 \Phi_p^4 a_3b_3  \Phi_p^3 \mathbb{I}[X_4
    < X_3] \right] \\
  & = \mathbb{E}\left[a_3b_3\Phi_p^3 f_4^2 \Phi_p^4 ( \mathbb{I}[X_3
    < X_4]+ \mathbb{I}[X_4
    < X_3])  \right] - \mathbb{E}\left[ b_4f_4 \Phi_p^4
    a_3f_3\Phi_p^3 ( \mathbb{I}[X_3
    < X_4] + \mathbb{I}[X_4 < X_3]) \right] 
\end{align*}
which leads to 
  \begin{equation}
    \label{eq:73}
    R = \mathbb{E} \left[
      \begin{pmatrix}
        a_3f_4 -a_4f_3 \\
        b_3f_4 -b_4f_3 
      \end{pmatrix}      \begin{pmatrix}
        a_3f_4 -a_4f_3  & 
        b_3f_4 -b_4f_3 
      \end{pmatrix}
      \Phi_p^{34}
    \right]
  \end{equation}
and  the claim follows.
\end{proof}
\begin{proof}[Proof of Theorem \ref{theo:oklinshepp}]

  The variances can be expressed using \eqref{eq:27} and, using
  successively \eqref{eq:25}, \eqref{eq:reprsntform} and
  \eqref{eq:28}, the covariance matrix can be rewritten by
\begin{align}
&\begin{pmatrix}
      \mathrm{Var}[f(X)] & \mathrm{Cov}[f(X), g(X)] \\
      \mathrm{Cov}[f(X), g(X)] & \mathrm{Var}[g(X)]
    \end{pmatrix}  \nonumber\\
    &=
\E\left[ 
\begin{pmatrix}
f(X_2)-f(X_1) \\
g(X_2)-g(X_1)
\end{pmatrix} 
\begin{pmatrix}
f(X_2)-f(X_1) &
g(X_2)-g(X_1)
\end{pmatrix} \mathbb{I}_{X_1<X_2} 
\right]. 
\end{align}
Using, as usual, the representation
\begin{equation}
f(X_2)-f(X_1) = \E\left[
\frac{\Delta^{-\ell}f(X_3)}{\sqrt{\Delta^{-\ell}h(X_3)}} 
\sqrt{\Delta^{-\ell}h(X_3)} \Phi_P^{\ell}(X_1,X_3,X_2)
| X_1,X_2\right]
\end{equation}
the claim follows by applying the Matrix Cauchy-Schwarz inequality
\eqref{eq:75}  to
the vector
$\mathbf{v}(x)= \begin{pmatrix}
  \frac{\Delta^{-\ell}f(x)}{\sqrt{\Delta^{-\ell}h(x)}} &
  \frac{\Delta^{-\ell}g(x)}{\sqrt{\Delta^{-\ell}h(x)}}
\end{pmatrix}'$ and the function $f(x) = \sqrt{\Delta^{-\ell}h(x)}$.
\end{proof}

\begin{exm}[Normal bounds] 
Direct application of \eqref{eq:72} show that, by taking $h(x)=x$, if $X \sim \mathcal{N}(0, 1)$ then 
\begin{align*}
 & \mathrm{Var}[f(X)] \mathrm{Var}[g(X)] - \{  \mathrm{Cov}[f(X), g(X)] \}^2  
 & \le    \mathbb{E} \left[\big(f'(X)\big)^2 \right]
     \mathbb{E} \left[ \big(g'(X)\big)^2 \right] - \mathbb{E} \left[ f'(X)g'(X)  \right]^2
  \end{align*}
A direct application of \eqref{rem:2} gives us the first order upper bound 
\begin{equation}
\mbox{Var}[g(X)] \le \mathbb{E}[g'(X)^2]. \nonumber
\end{equation}
which was already obtained in \eqref{eq:70} and in Example \ref{ex:normbounds}.
\end{exm}

\begin{exm}[Poisson bounds] 
Direct application of \eqref{eq:72} show that, by taking $h(x)=x$, if $X \sim \mathrm{Pois}(\lambda)$ then 
$
\mathrm{Var}[f(X)] \mathrm{Var}[g(X)] - \{  \mathrm{Cov}[f(X), g(X)] \}^2  
$
  is bounded by
\begin{align*}
    \mathbb{E}
     \left[\big(\Delta^{-}f(X)\big)^2
     X \right]
     \mathbb{E} \left[
     \big(\Delta^{-}g(X)\big)^2 X
     \right]  -
     \mathbb{E} \left[ 
\Delta^{-}f(X)\Delta^{-}g(X) X
     \right]^2
  \end{align*}
  and by
  \begin{align*}
    \lambda^2 \mathbb{E}
     \left[\big(\Delta^{+}f(X)\big)^2
     \right]
     \mathbb{E} \left[
     \big(\Delta^{+}g(X)\big)^2 
     \right]  - \lambda^2
     \mathbb{E} \left[ 
\Delta^{+}f(X)\Delta^{+}g(X) \right]^2.  
  \end{align*}
Moreover, by \eqref{rem:2}, we find two first order upper bounds from Example \ref{ex:poiss3}:
\begin{align*}
\mbox{Var}[g(X)] \le \mathbb{E}[X \big(\Delta^{-}g(X)\big)^2] \mbox{ and }
\mbox{Var}[g(X)] \le \lambda \mathbb{E}[ \big(\Delta^{+}g(X)\big)^2].
\end{align*}
\end{exm}

\section{Applications of the representations to estimating Stein
  factors}
\label{sec:new-repr-stein}

Recall that if $\mathcal{A}_p$ is as in \eqref{eq:33} from Definition
\ref{def:stand-canon-oper} and $\mathcal{H}$ is a collection of
$h : \R \to \R$ belonging to $L^1(p)$, the ``$\mathcal{A}_p$ Stein
equation on $\mathcal{H}$'' is the functional equation
\begin{equation}
  \label{eq:43}
 \mathcal{A}_pg(x) = h(x) - \mathbb{E}[h(X)], \quad x \in \mathcal{S}(p)
\end{equation}
whose  solutions are 
\begin{equation}
  \label{eq:8}
  g(x) = \frac{\mathcal{L}_p^{\ell} h(x)}{\mathcal{L}_p^{\ell} 
 \eta(x)}
\end{equation}
with the convention that $g(x) = 0$ for all $x$ outside of
$\mathcal{S}(p)$. Uniform in $\mathcal{H}$ bounds on $g$ and its
derivatives are known as \emph{Stein factors}; in practice it is
useful to have information on $ \| g_h\|, \|\Delta^{\ell} g_h\|$ also
for some specific functions $h$ (particularly $h(x) = x$). We conclude
the paper with an application of Lemma \ref{lem:represnt} towards
understanding properties of solutions \eqref{eq:8}.  The result is
immediate from previous developments.

\begin{prop}\label{prop:applirep}
 
\begin{enumerate}
\item If $h$ is monotone  then $\mathcal{L}_p^{\ell}h(x)$ does not
  change sign.
  \item If, for all $h \in \mathcal{H}$, there exists a monotone
  function $\eta$ such that $|h(x)-h(y)| \le k|\eta(x)-\eta(y)|$ for
  all $x, y$ then the function $g$ defined in \eqref{eq:8} satisfies
  $ |g(x)| \le k$. In particular if $\mathcal{H}$ is the collection of
  Lipschitz functions with constant $1$ then $\| g\|_{\infty} \le 1$.
\item Let $P$ and $\bar{P}=1-P$ be the cdf and survival function of
  $p$, respectively, and define the function
  \begin{equation}
    \label{eq:9}
    R^{\ell}(x) = \frac{P(x-\ell(\ell+1)/2)
\bar{P}(x+\ell(\ell-1)/2)}{p(x)}
  \end{equation} 
  for all $x \in \mathcal{S}(p)$. We always have
\begin{equation}
  \label{eq:47}
  \left| \mathcal{L}_p^{\ell}h(x)  \right| \le 2 \| h \|_{\infty}
  \frac{\mathbb{E}[\chi^{\ell}(X_1, x)] \mathbb{E}[\chi^{-\ell}(x,
    X_2)]}{p(x)} =  2 \| h \|_{\infty} R^{\ell}(x)
\end{equation}
for all $x \in \mathcal{S}(p)$. 
\end{enumerate}

\end{prop}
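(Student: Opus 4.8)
The plan is to obtain all three statements from the third representation of the inverse operator, Lemma \ref{lem:represnt}, which reads
\[
-\mathcal{L}_p^{\ell}h(x) = \mathbb{E}\left[\big(h(X_2)-h(X_1)\big)\,\Phi_p^{\ell}(X_1,x,X_2)\right],
\]
combined with two structural facts about the kernel: $\Phi_p^{\ell}(u,x,v)\ge 0$ (it is a product of indicators divided by the strictly positive $p(x)$ on $\mathcal{S}(p)$), and $\Phi_p^{\ell}(u,x,v)=0$ whenever $u\ge v$, as noted in Lemma \ref{lem:represnt}. The second fact means that only the event $\{X_1<X_2\}$ contributes to the expectation. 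For part (1), if $h$ is non-decreasing then on $\{X_1<X_2\}$ we have $h(X_2)-h(X_1)\ge 0$, so the integrand is a product of non-negative quantities and $-\mathcal{L}_p^{\ell}h(x)\ge 0$ for every $x\in\mathcal{S}(p)$; a non-increasing $h$ gives the reverse inequality. In either case the sign is constant in $x$.

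For part (2), fix $h\in\mathcal{H}$ together with the associated monotone $\eta$, and assume without loss of generality that $\eta$ is non-decreasing (replacing $\eta$ by $-\eta$ changes $g$ only by an overall sign). Applying the representation to both $h$ and $\eta$ and using that $\eta(X_2)-\eta(X_1)\ge 0$ and $\Phi_p^{\ell}\ge 0$ on $\{X_1<X_2\}$, the pointwise domination $|h(X_2)-h(X_1)|\le k\,|\eta(X_2)-\eta(X_1)| = k\,(\eta(X_2)-\eta(X_1))$ passes through the expectation:
\begin{align*}
\left|\mathcal{L}_p^{\ell}h(x)\right| &\le \mathbb{E}\left[\,|h(X_2)-h(X_1)|\,\Phi_p^{\ell}(X_1,x,X_2)\right] \\
&\le k\,\mathbb{E}\left[\big(\eta(X_2)-\eta(X_1)\big)\,\Phi_p^{\ell}(X_1,x,X_2)\right] = k\left(-\mathcal{L}_p^{\ell}\eta(x)\right).
\end{align*}
This yields $\left|\mathcal{L}_p^{\ell}h(x)\right|\le k\left|\mathcal{L}_p^{\ell}\eta(x)\right|$, whence $|g(x)|\le k$ at every point where the denominator of \eqref{eq:8} is non-zero. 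The Lipschitz corollary follows by taking $\eta=\mathrm{Id}$ and $k=1$.

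For part (3), the crude bound $|h(X_2)-h(X_1)|\le 2\|h\|_{\infty}$ together with $\Phi_p^{\ell}\ge 0$ gives
\[
\left|\mathcal{L}_p^{\ell}h(x)\right| \le 2\|h\|_{\infty}\,\mathbb{E}\left[\Phi_p^{\ell}(X_1,x,X_2)\right],
\]
and the independence of $X_1,X_2$ factorises the last expectation as $p(x)^{-1}\,\mathbb{E}[\chi^{\ell}(X_1,x)]\,\mathbb{E}[\chi^{-\ell}(x,X_2)]$. Evaluating the two one-dimensional expectations against the definition $\chi^{\ell}(u,x)=\mathbb{I}_{u\le x-\ell(\ell+1)/2}$ identifies the first factor with $P(x-\ell(\ell+1)/2)$ and the second with $\bar P(x+\ell(\ell-1)/2)$, which is exactly the claimed $R^{\ell}(x)$.

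The only genuinely delicate point is the sign bookkeeping in part (2): one must use both that the support constraint $X_1<X_2$ forces a definite sign for $\eta(X_2)-\eta(X_1)$, so that the modulus in the domination can be dropped and the estimate collapses precisely onto $-\mathcal{L}_p^{\ell}\eta(x)$, and that part (1) ensures $-\mathcal{L}_p^{\ell}\eta$ keeps a constant sign, so that forming and bounding the quotient $g=\mathcal{L}_p^{\ell}h/\mathcal{L}_p^{\ell}\eta$ is legitimate. Everything else reduces to the non-negativity of $\Phi_p^{\ell}$ and the factorisation afforded by the independence of the copies $X_1,X_2$.
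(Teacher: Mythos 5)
Your proof is correct and follows essentially the same route as the paper: all three parts are derived from the representation formula III \eqref{eq:reprsntform}, using the non-negativity of $\Phi_p^{\ell}$, the fact that only the event $\{X_1<X_2\}$ contributes, and (for part 3) the factorisation coming from the independence of $X_1,X_2$. Your sign bookkeeping in part (2) is in fact slightly more careful than the paper's, which assumes $\eta$ strictly decreasing without normalising the resulting sign of the denominator.
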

\begin{proof}
  Recall representation
  \eqref{eq:reprsntform} which states that
  \begin{equation*}
  -\mathcal{L}_p^{\ell}h(x)  =  \frac{1}{p(x)}\mathbb{E} \left[
       (h(X_2)-h(X_1)) 
  \chi^{\ell}(X_1, x)\chi^{-\ell}(x, X_2)\right]. 
\end{equation*}
\begin{enumerate}
\item If $h(x)$ is monotone the $h(X_2) - h(X_1)$ is of constant sign
  conditionally on  $\chi^{\ell}(X_1, x)\chi^{-\ell}(x, X_2)$.
\item Suppose that the function $\eta$ is strictly decreasing.  By
  definition of $g$ we have, under the stated conditions,
  \begin{align*}
    |g(x)| &  = \left| \frac{\mathbb{E} \left[
       (h(X_2)-h(X_1)) 
  \chi^{\ell}(X_1, x)\chi^{-\ell}(x, X_2)\right]}{\mathbb{E} \left[
       ( \eta(X_2)-\eta(X_1)) 
             \chi^{\ell}(X_1, x)\chi^{-\ell}(x, X_2)\right] } \right|
 \\
    & \le  \frac{\mathbb{E} \left[
       |h(X_2)-h(X_1)| 
  \chi^{\ell}(X_1, x)\chi^{-\ell}(x, X_2)\right]}{\mathbb{E} \left[
       ( \eta(X_2)-\eta(X_1)) 
             \chi^{\ell}(X_1, x)\chi^{-\ell}(x, X_2)\right] } \\
    & \le k.
  \end{align*}
  
\item By definition,
  \begin{align*}
\big|  \mathcal{L}_p^{\ell}h(x)\big| = \mathbb{E} \left[
       |h(X_2)-h(X_1)|
  \chi^{\ell}(X_1, x)\chi^{-\ell}(x, X_2)\right]  \le 2 \|h\|_{\infty}  \mathbb{E} \left[
    \chi^{\ell}(X_1, x)\chi^{-\ell}(x, X_2)\right]
  \end{align*}
  which leads to the conclusion. 
\end{enumerate}
\end{proof}

 \begin{figure}
\begin{subfigure}{.3\textwidth}
  \centering
  \includegraphics[width=.8\linewidth]{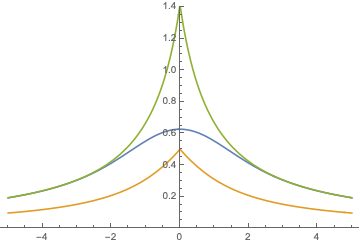}
  \caption{}
  \label{fig:sfig1}
\end{subfigure}%
\begin{subfigure}{.3\textwidth}
  \centering
  \includegraphics[width=.8\linewidth]{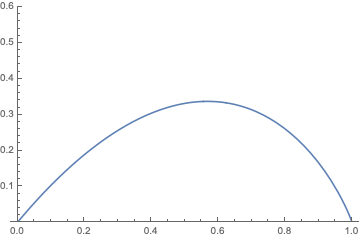}
  \caption{}
  \label{fig:sfig2}
\end{subfigure}
\begin{subfigure}{.3\textwidth}
  \centering
  \includegraphics[width=.8\linewidth]{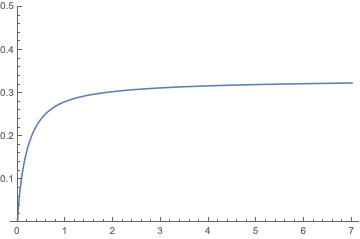}
  \caption{}
  \label{fig:sfig3}
\end{subfigure}

\begin{subfigure}{.3\textwidth}
  \centering
  \includegraphics[width=.8\linewidth]{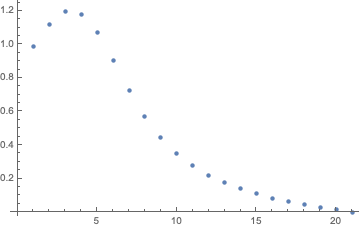}
  \caption{}
  \label{fig:sfigbin}
\end{subfigure}%
\begin{subfigure}{.3\textwidth}
  \centering
  \includegraphics[width=.8\linewidth]{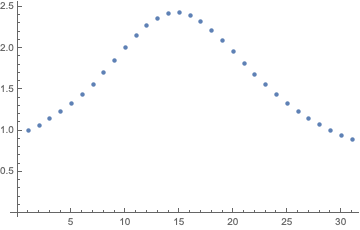}
  \caption{}
  \label{fig:sfigpoi}
\end{subfigure}
\begin{subfigure}{.3\textwidth}
  \centering
  \includegraphics[width=.8\linewidth]{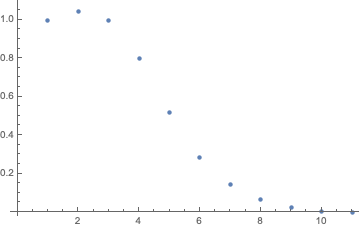}
  \caption{}
  \label{fig:sfighyp}
\end{subfigure}
\caption{{\it The functions $R^{\ell}$ for standard normal distribution
  (Figure~\ref{fig:sfig1}, along with the upper and lower bounds from \eqref{eq:5}), Beta distribution with parameters 8/9 and
1/3 (Figure~\ref{fig:sfig2}) and Gamma distribution with parameters
1/3 and 1/3 (Figure~\ref{fig:sfig3}), binomial distribution with
  parameters $(20, .2)$
  (Figure~\ref{fig:sfigbin}), Poisson distribution with parameter 15  (Figure~\ref{fig:sfigpoi}) and hypergeometric distribution with parameters
10, 10 and 30 (Figure~\ref{fig:sfighyp})}.
\label{fig:figcont}}
\end{figure}

\begin{exm}
  If $p = \phi$ is the standard Gaussian then
  $ R(x) = {\Phi(x)\big(1-\Phi(x)\big)}/{\phi(x)}$ is closely
  related to $r(x)$, the Mill's ratio of the standard normal law.  The
  study of such a function is classical and much is known. For
  instance, we can apply \cite[Theorem 2.3]{baricz2008mills} to get
\begin{equation}\label{eq:5}
\frac{1}{\sqrt{x^2+4}+x}\le \frac{1}{2} r(x)  \le   R(x) \le  r(x)
\le \frac{4}{\sqrt{x^2+8}+3x} 
\end{equation}
for all $x \ge 0$.   Moreover, obviously, $R(x) \le R(0) =
1/2\sqrt{\pi/2} \approx 0.626$. If $h$ is bounded then $\|
\mathcal{L}_{p}^{\ell}h \|_{\infty} \le \sqrt{\pi/2}$ as is well
known, see e.g.\ \cite[Theorem 3.3.1]{NP11}
\end{exm}

\section*{Acknowledgements}

The research of YS was partially supported by the Fonds de la
Recherche Scientifique -- FNRS under Grant no F.4539.16.  ME
acknowledges partial funding via a Welcome Grant of the Universit\'e
de Li\`ege. YS also thanks Lihu Xu for organizing the ``Workshop on
Stein's method and related topics'' at University of Macau in December
2018, and where this contribution was first presented. GR and YS also
thank Emilie Clette for fruitful discussions on a preliminary version
of this work. We also thank Benjamin Arras for several pointers to
relevant literature, as well as corrections on the first draft of the
paper.

Finally, concerning the results in Section \ref{sec:new-repr-stein},
we thank Guillaume Mijoule for communicating with us about a technique
he has developped which not only leads to our bound \eqref{eq:47}, but
also to many more higher order ``Stein factors'', see \cite{M19}.

\bibliographystyle{abbrv} 
\bibliography{biblio_ys}

\end{document}